\renewcommand*{\eqref}[1]{%
\hyperref[{#1}]{\textup{\tagform@{\!\!\ref*{#1}}}}%
}\makeatother 
\theoremstyle{plain}
\newtheorem{theorem}{Theorem}[section]
\newtheorem{lemma}[theorem]{Lemma}
\newtheorem{proposition}[theorem]{Proposition}
\theoremstyle{definition}
\newtheorem{remark}[theorem]{Remark}
\newtheorem{assumption}{Assumption}
\newcommand{\bignorm}[1]{{\left\|#1\right\|}}
\newcommand{\norm}[1]{{\|#1\|}}
\def\Re{\mathop{\mathrm{Re}}\nolimits}
\def\Im{\mathop{\mathrm{Im}}\nolimits}
\def\loc{\mathop{\mathrm{loc}}\nolimits}
\def\ap{\mathop{\mathrm{ap}}\nolimits}
\def\R{{\mathbb{R}}}
\def\C{{\mathbb{C}}}
\def\F{{\mathcal{F}}}
\def\H{{\mathcal{H}}}
\def\L{{\mathcal{L}}}
\def\D{{\mathcal{D}}}
\def\M{{\mathcal{M}}}
\def\<{{\langle}}
\def\>{{\rangle}}
\def\ep{{\varepsilon}}
\def\ds{\displaystyle}
\DeclareMathOperator*{\slim}{s-lim}
\DeclareMathOperator*{\wlim}{w-lim}
\title[Modified scattering for inhomogeneous NLS]{Modified scattering for inhomogeneous nonlinear Schr\"odinger equations with and without inverse-square potential}
\author[K. Aoki]{Kazuki Aoki}
\address[K. Aoki]{}
\email{k.aoki5296@gmail.com}
\author[T. Inui]{Takahisa Inui}
\address[T. Inui]{Department of Mathematics, Graduate School of Science, Osaka University, Toyonaka, Osaka 560-0043, Japan}
\email{inui@math.sci.osaka-u.ac.jp}
\author[H. Miyazaki]{Hayato Miyazaki}
\address[H. Miyazaki]{Teacher Training Courses, Faculty of Education, Kagawa University, Takamatsu, Kagawa 760-8522, Japan}
\email{miyazaki.hayato@kagawa-u.ac.jp}
\author[H. Mizutani]{Haruya Mizutani}
\address[H. Mizutani]{Department of Mathematics, Graduate School of Science, Osaka University, Toyonaka, Osaka 560-0043, Japan}
\email{haruya@math.sci.osaka-u.ac.jp}
\author[K. Uriya]{Kota Uriya}
\address[K. Uriya]{Department of Applied Mathematics, Faculty of Science, Okayama University of Science,  Okayama, Okayama 700-0005, Japan}
\email{uriya@xmath.ous.ac.jp}
\begin{document}

\begin{abstract}
We consider the final state problem for the inhomogeneous nonlinear Schr\"odinger equation with a critical long-range nonlinearity. Given a prescribed asymptotic profile, which has a logarithmic phase correction compared with the free evolution, we construct a unique global solution which converges to the profile. As a consequence, the existence of modified wave operators for localized small scattering data is obtained. We also study the same problem for the case with the critical inverse-square potential under the radial symmetry. In particular, we construct the modified wave operators for the long-range nonlinear Schr\"odinger equation with the critical inverse-square potential in three space dimensions, under the radial symmetry. 
\end{abstract}
\maketitle

\footnotetext{2020 \textit{Mathematics Subject Classification}. 35Q55, 35B40, 35P25.}\footnotetext{\textit{Key words and phrases}. Inhomogeneous nonlinear Schr\"odinger equation; Inverse-square potential; Modified scattering; Asymptotic behavior}

\section{Introduction}
\subsection{Background and problems}
In this paper we mainly consider the inhomogeneous nonlinear Schr\"odinger equation (NLS): \begin{align}
\label{INLS}
i\partial_tu+\Delta u=N(u),\quad(t,x)\in \R\times \R^d,
\end{align}
where $u=u(t,x)$ is a $\C$-valued unknown function, $\lambda\in \R$ and
$$ 
N(u)=\lambda |x|^{-\beta}|u|^{\alpha}u.
$$
The inhomogeneous NLS can be regarded as one of generalizations of the standard NLS with the constant coefficient nonlinearity $\lambda |u|^{\alpha}u$: 
\begin{align}
\label{NLS_1}
i\partial_tu+\Delta u=\lambda |u|^{\alpha}u,\quad \alpha>0,
\end{align}
which is a mathematical model for the wave propagation in nonlinear optics. Variable coefficient nonlinearities such as $N(u)$ naturally appear when one takes the inhomogeneity of the medium into account (see e.g. \cite[Chapter 6]{LiKe}). 

Recently, the inhomogeneous NLS \eqref{INLS} has attracted several attention in the mathematical literature. From a mathematical viewpoint, an interest of the model \eqref{INLS} is that it has a similar scaling symmetry \eqref{scaling} as that for \eqref{NLS_1} thanks to the homogeneity of the term $|x|^{-\beta}$. One could thus discuss various properties of the solutions to \eqref{INLS} based on scaling consideration as in case of \eqref{NLS_1}, which is one of motivation to study \eqref{INLS}. Another motivation, which might be not widely known, is a unitary equivalence (under the radial symmetry) between \eqref{INLS} with $(d,\alpha,\beta)=(2,2/3,1/3)$ and the following NLS with the inverse-square potential:
\begin{align}
\label{NLSI0}
i\partial_t\tilde u+\Delta \tilde u+\frac{1}{4|x|^2}\tilde u=\tilde \lambda |\tilde u|^{2/3}\tilde u,\quad (t,x)\in \R\times \R^3.
\end{align}
The NLS with an inverse-square potential $a|x|^{-2}$ has also attracted a lot of attention in the past decade (see e.g. \cite{BPST,ZhZh,KMVZZ}). As an application of our results for \eqref{INLS}, we also study \eqref{NLSI0} as well as a more general model \eqref{NLSI} than \eqref{NLSI0} in this paper.

 As mentioned above, \eqref{INLS} is invariant under the scaling
\begin{align}
\label{scaling}
u(t,x)\mapsto u_\lambda(t,x)=\lambda^{\frac{2-\beta}{\alpha}}u(\lambda^2t,\lambda x),
\end{align}
namely if $u$ solves \eqref{INLS}, then so does $u_\lambda$. Hence, the  natural scaling-critical Sobolev space is
$$
\dot H^{s_c}(\R^d),\quad s_c=\frac d2-\frac{2-\beta}{\alpha}. 
$$
The basic well-posedness problem in $L^2$ or $H^1$ for \eqref{INLS} has been studied by \cite{GeSt,Guzman} (see also \cite{Dinh} for more recent developments). The problem of scattering for \eqref{INLS}  in case of $0\le s_c\le 1$ has been also studied by many authors  (see e.g. \cite{FaGu_JDE, Dinh, FaGu_BBMC, MMZ} and reference therein), where it has been shown that the global solution $u(t)$ scatters to a solution to the free Schr\"odinger equation (called a free solution) in the sense that there exist $u_\pm$ such that $$u(t)=e^{it\Delta}u_\pm+o(1),\quad t\to\pm \infty,$$ in a suitable Sobolev space, under some appropriate conditions on the parameters $\alpha,\beta,\lambda,d$ and the initial data. Such a phenomenon is called the {\it short-range scattering} or simply {\it scattering} and, in such a case, $N(u)$ is said to be of {\it short-range} type. However, the problem of scattering is less understood  for the case $s_c<0$, or equivalently $0<\alpha d/2+\beta<2$. In particular, there is no previous literature on the global dynamics of the solutions to \eqref{INLS} if $0<\alpha d/2+\beta\le1$ in which case we call $N(u)$ of {\it long-range} type. 

On the other hand, the scattering theory for the standard NLS \eqref{NLS_1} has been extensively studied in both of the short-range and long-range cases. It is well known that the exponent $\alpha=2/d$ is critical in the following sense (see e.g. \cite{Cazenave}):
\begin{itemize}
\item If $2/d<\alpha\le 4/(d-2)$ for $d\ge3$ or $2/d<\alpha<\infty$ for $d=1,2$, then any sufficiently smooth, localized and small solution of \eqref{NLS_1} scatters to a free solution. 
\item If $\alpha \le 2/d$, then there is no non-trivial solution which scatters to a free solution. 
\end{itemize}
Moreover, the asymptotic behavior of the solution has been also studied in the critical case $\alpha =2/d$. The final state problem for \eqref{NLS_1} with $\alpha =2/d$ was studied by Ozawa \cite{Ozawa1991} for $d=1$ and Ginibre--Ozawa \cite{GiOz} for $d=2,3$, where they showed the existence of the unique global solution with a given scattering datum $\varphi$ and the prescribed asymptotic behavior as $t\to \infty$: 
\begin{align}
\label{NLS_2}
u(t)=\frac{1}{(2it)^{d/2}}\varphi\left(\frac{x}{2t}\right)\exp\left(i\frac{|x|^2}{4t}-i\frac\lambda2\left|\varphi\left(\frac{x}{2t}\right)\right|^{2/d}\log t\right)+o(1),\quad t\to\infty.
\end{align}
Note that \eqref{NLS_2} has a logarithmic phase correction term compared with one for the free solution 
\begin{align}
\label{free_asymptotics}
e^{it\Delta}\F^{-1}\varphi=\frac{1}{(2it)^{d/2}}\varphi\left(\frac{x}{2t}\right)\exp\left(i\frac{|x|^2}{4t}\right)+o(1),\quad t\to \infty.
\end{align}
This type of phenomena is called the {\it modified scattering}. The modified scattering for the Cauchy problem of \eqref{NLS_1} with $\alpha=2/d$ and $1\le d\le 3$ was studied by Hayashi--Naumkin \cite{HaNa1998}, namely they determined the asymptotic behavior of the (small) solution to the associated Cauchy problem, which is essentially given by \eqref{NLS_2} with some $\varphi$ determined by the initial state. For more literatures and recent developments on the modified scattering for NLS, we refer to \cite{HWN,MaMi,MMU}. 
 
Motivated by these observations, we initiate the study of the long-range scattering theory for the inhomogeneous NLS \eqref{INLS}. We focus on the final state problem and prove the modified scattering for \eqref{INLS} with the parameters $\alpha,\beta$ satisfying the following conditions:
\begin{align}
\label{long_range_INLS}
0<\beta<\min\Big\{\frac d2,1\Big\},\quad\frac d2+\frac\beta\alpha<\alpha+1,\quad \frac{\alpha d}{2}+\beta=1.
\end{align}
Note that \eqref{long_range_INLS} holds if and only if
\begin{align}
\label{remark_1_1}
\max\Big\{\frac1d,\frac{\sqrt5-1}{2}\Big\}<\alpha\le \frac2d,\quad 0<\beta<\beta_d:=\min\Big\{\frac d2,\frac{d+4-d\sqrt5}{4}\Big\},\quad \frac{\alpha d}{2}+\beta=1,
\end{align}
where $\beta_d$ satisfies, for instance, $\beta_1=\frac12$, $\beta_2\in (\frac13,\frac12)$, $\beta_3\in (\frac{1}{14},\frac{1}{13})$ and $\beta_d<0$ for $d\ge4$. In particular, \eqref{long_range_INLS} makes sense if and only if $1\le d\le 3$. 

The condition $\alpha d/2+\beta=1$ is most important among \eqref{long_range_INLS} and the threshold from the viewpoint of short-range scattering vs. long-range scattering dichotomy. Indeed, as a supplementary result, we also prove the scattering to a free solution for the case $1<{\alpha d}/{2}+\beta\le2$ and $\lambda>0$. Here we give two rough explanations where this condition comes from. First, it has been observed for many dispersive equations that the finiteness/infiniteness of the quantity 
$$
\int_1^\infty \norm{N(\cdot,u_{\mathrm{free}}(t,\cdot))}_{L^2}dt
$$
 for the perturbation term $N(x,u)$ is closely related to the asymptotic behaviors of the solutions, where $u_{\mathrm{free}}$ is a free solution to the corresponding linear equation without perturbations. Namely, one can heuristically expect that a sufficiently small, localized and smooth solution scatters to a free solution if the quantity is finite, otherwise the modified scattering has to be taken into account. In case of \eqref{INLS}, thanks to  \eqref{free_asymptotics}, the above quantity can be replaced by
$$
\int_1^\infty |2t|^{-\alpha d /2}\bignorm{|x|^{-\beta}\left|\varphi\left(\frac{x}{2t}\right)\right|^\alpha}_{L^\infty} dt=\norm{|x|^{-\beta/\alpha}\varphi}_{L^\infty}^\alpha\int_1^\infty |2t|^{-\alpha d /2-\beta}dt
$$ 
which leads the condition $ \alpha d/2+\beta=1$ should be critical. Second, \eqref{INLS} is regarded as an intermediate model between the two cases $\beta=0$ and $\alpha=0$. When $ \alpha d/2+\beta=1$, the former case is the NLS \eqref{NLS_1} with $\alpha=2/d$ for which one has the modified scattering as mentioned above, while the latter is the linear Schr\"odinger equation with the Coulomb potential: 
\begin{align}
\label{Coulomb}
i\partial_tu+\Delta u=\lambda |x|^{-1}u.
\end{align}
It is well known (see e.g. \cite{DeGe}) that, as with the former case, \eqref{Coulomb} is also of long-range type so that the asymptotic behavior of the continuous part of the solution is given by
\begin{align}
\label{Coulomb_2}
P_cu(t)=\frac{1}{(2it)^{d/2}}\varphi\left(\frac{x}{2t}\right)\exp\left(i\frac{|x|^2}{4t}-i\frac\lambda2\left|\frac{x}{2t}\right|^{-1}\log t\right)+o(1),\quad t\to \infty
\end{align}
with some scattering datum $\varphi$. 
We will in fact show the existence of the global solution to \eqref{INLS} whose asymptotic behavior is essentially given by an interpolation between \eqref{NLS_2} and \eqref{Coulomb_2}.  

As an application of the above result, we also study the final state problem for the inhomogeneous NLS with an inverse-square potential: 
\begin{align}
\label{NLSI}
i\partial_t\tilde u+\Delta \tilde u+\frac{(n-2)^2}{4|x|^2}\tilde u=\tilde N(\tilde u),\quad(t,x)\in \R\times \R^n,
\end{align}
where $n\ge3$, $\lambda\in \R$ and $\tilde N(\tilde u)=\tilde \lambda|x|^{-\tilde \beta} |\tilde u|^{\tilde \alpha} \tilde u$. Precisely, under the radial symmetry, we show the modified scattering for a range of parameters $n,\tilde\alpha,\tilde\beta$ (see \eqref{long_range_NLSI} below). It is worth noting that the range includes the case $(n,\tilde \alpha,\tilde \beta)=(3,2/3,0)$, where \eqref{NLSI} becomes the standard long-range NLS with an inverse-square potential in the three space dimensions (see \eqref{NLSI0}). Since a seminal work by Burq et al \cite{BPST} on the Strichartz estimates, the well-posedness and short-range scattering theory for the standard NLS with inverse-square potentials $V(x)=a|x|^{-2}$ have been extensively studied (see e.g. \cite{BPST,ZhZh,KMVZZ,LMM,Suzuki}). However, there is no previous result on the modified scattering for such a model. Note that there are several results on the modified scattering for the NLS with other types of linear potentials (see e.g. \cite{Ivan,GPR,ChPu} for bounded decaying potentials in one space dimension, \cite{ShTo} for the Stark potential in dimension $1\le d\le3$ and \cite{Segata,MMS} for the Dirac delta potential in one space dimension). One interest of our result for \eqref{NLSI} compered with these previous literature is that we deal with a critically singular potential $|x|^{-2}$ in multi-dimensions. Also it is worth noting that the coupling constant $\frac{(n-2)^2}{4}$ is critical in the sense that the operator $\mathcal L=-\Delta-\frac{(n-2)^2}{4|x|^2}$ has a zero resonant state ({i.e.} non-$L^2$ ground state) $u_*=|x|^{-(n-2)/2}\notin L^2$, namely $u_*$ solves $\mathcal L u_*=0$ for $x\neq0$. The case with zero resonances (often called the exceptional case) is usually much harder to study than the generic case without zero resonances and there seems to be no previous result on the modified scattering for the multi-dimensional NLS in the exceptional case (except for the free case without external potential in $d=2$), while there are some known results in $d=1$ (see \cite{Ivan,ChPu} and references therein).

At the end of the introduction, we summarize the main results in  this paper:
\begin{itemize}
\item On the equation \eqref{INLS}: the modified scattering for the final state problem under the condition \eqref{long_range_INLS} (Theorem \ref{theorem_1}); the short-range scattering for the Cauchy problem in the case when $1<\alpha d/2+\beta\le2$ and $\lambda>0$ (Theorem \ref{theorem_C_1}).
\item On the equation \eqref{NLSI}: the modified scattering for the final state problem under the radial symmetry and the condition \eqref{long_range_NLSI} (Theorem \ref{theorem_2}). 
\end{itemize}
As mentioned above, these are the first results on the modified scattering for \eqref{INLS} and \eqref{NLSI}. We hope that our method could be also applied to establish the modified scattering for the Cauchy problems associated with \eqref{INLS} and \eqref{NLSI} in which case there is no existing result. 
 
\subsection{Notation}
\begin{itemize}
\item $\<x\>$ stands for $\sqrt{1+|x|^2}$. 
\item For $p\in [1,\infty]$, $p'=p/(p-1)$ denotes its H\"older conjugate exponent. 
\item For $a,b>0$, $a\lesssim b$ means there exists a (non-essential) constant $C>0$ such that $a\le Cb$.
\item $H^s=H^s(\R^d)$ and $\dot H^s=\dot H^s(\R^d)$ denote the inhomogeneous and homogeneous $L^2$-based Sobolev spaces of order $s$, respectively. 
\item $L^{p,q}=L^{p,q}(\R^d)$ denotes the Lorentz space (see \cite{Grafakos_1}). 
\item Given a function space $X$ on $\R^\ell$, $X_r$ denotes the subset of all radial functions in $X$. 
\end{itemize}
\subsection{Main results}
For the first result, we impose the following condition: 

\begin{assumption}
\label{assumption_A}
$|x|^{-\beta/\alpha}\varphi$ belongs to $L^\infty$. Moreover, $\varphi$ satisfies the following properties: 
\begin{align}
\begin{cases}
\label{theorem_1_1}
|x|^{-\beta/\alpha}\varphi\in \dot H^\delta& \text{if $d=1$ and $\delta<1$},\\
|x|^{-1}\varphi\in H^{\delta-1}& \text{if $d=1$ and $\delta=1$ or $d=2,3$}
\end{cases}
\end{align}
\end{assumption}
Assumption \ref{assumption_A} holds if $\varphi\in H^\delta$ with $\delta>d/2$ and $\varphi$ is supported away from the origin. Therefore, Assumption \ref{assumption_A} is essentially a condition on the vanishing order of $\varphi$ at $x=0$ (see Remark \ref{remark_2} for more details). The result on the inhomogeneous NLS then is as follows: 

\begin{theorem}	
\label{theorem_1}
Let $d\in \{1,2,3\}$, $\lambda\in \R$, $\alpha$ and $\beta$ satisfy \eqref{long_range_INLS}. Suppose $$\frac d2+\frac\beta\alpha<2\theta<\delta<\alpha+1$$ and $\delta\le d$. Then there exists $\ep>0$ such that for any $\varphi\in H^\delta$ satisfying Assumption \ref{assumption_A} and $\norm{|x|^{-\beta/\alpha}\varphi}_{L^\infty}\le\ep$ there exists a unique global solution $u\in C(\R;L^2(\R^d))$ to \eqref{INLS} satisfying
\begin{align}
\label{theorem_1_2}
\bignorm{u(t)-u_{\mathrm{ap}}(t)}_{L^2(\R^d)}\lesssim t^{-\theta},\quad t\ge1,
\end{align}
where the asymptotic profile $u_{\mathrm{ap}}$ is given by
\begin{align}
\label{theorem_1_3}
u_{\mathrm{ap}}(t,x)=\frac{1}{(2it)^{d/2}}\varphi\left(\frac{x}{2t}\right)\exp\left(i\frac{|x|^2}{4t}-i\frac\lambda2\left|\frac{x}{2t}\right|^{-\beta}\left|\varphi\left(\frac{x}{2t}\right)\right|^\alpha\log t\right).
\end{align}
\end{theorem}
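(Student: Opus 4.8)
The plan is to construct the solution first on a half-line $[T,\infty)$ for $T$ large, as a fixed point of a Duhamel-type map, and then to extend it to all of $\R$ using the $L^2$ local well-posedness theory for \eqref{INLS} recalled above (solving \eqref{INLS} backwards from $t=T$), with uniqueness in $C(\R;L^2)$ obeying \eqref{theorem_1_2} coming from a continuation argument. Rather than working directly with the profile \eqref{theorem_1_3}, I would introduce the smoother companion $\wtilde u_{\mathrm{ap}}(t):=e^{it\Delta}\F^{-1}[g(t)]$ with $g(t,y):=\varphi(y)e^{-i\Psi(y)\log t}$, $\Psi(y):=\tfrac\lambda2|y|^{-\beta}|\varphi(y)|^{\alpha}$, seek $u=\wtilde u_{\mathrm{ap}}+v$, and set up the contraction for $v$ via
\[ v(t)=i\int_t^{\infty}e^{i(t-s)\Delta}\big[N(\wtilde u_{\mathrm{ap}}(s)+v(s))-N(\wtilde u_{\mathrm{ap}}(s))-\wtilde{\mathcal E}(s)\big]\,ds,\qquad \wtilde{\mathcal E}:=(i\partial_t+\Delta)\wtilde u_{\mathrm{ap}}-N(\wtilde u_{\mathrm{ap}}), \]
on a ball controlling $\sup_{t\ge T}t^{\theta}\|v(t)\|_{L^2}$ plus an auxiliary (fractional Sobolev / weighted $L^2$) norm used only to close the genuinely nonlinear contribution. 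Writing $\wtilde u_{\mathrm{ap}}(t)=M(t)D(t)[\F M(t)\F^{-1}g(t)]$ with $M(t)=e^{i|x|^2/4t}$ and $D(t)f(x)=(2it)^{-d/2}f(x/2t)$, and using $\|(\F M(t)\F^{-1}-\Id)f\|_{L^2}\lesssim t^{-\delta/2}\|f\|_{\dot H^{\delta}}$ (which holds since $|e^{i|x|^2/4t}-1|\lesssim t^{-\delta/2}|x|^{\delta}$ for $\delta\le2$) together with Assumption \ref{assumption_A}, one gets $\|\wtilde u_{\mathrm{ap}}(t)-u_{\mathrm{ap}}(t)\|_{L^2}\lesssim t^{-\delta/2}\langle\log t\rangle^{C}$; as $2\theta<\delta$, the bound \eqref{theorem_1_2} will then follow from the corresponding estimate on $v$ and the triangle inequality.

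The crucial computation is that of $\wtilde{\mathcal E}$. Because $g$ is tailored so that $i\partial_t g=\tfrac{\lambda}{2t}|y|^{-\beta}|\varphi|^{\alpha}g$, we have $(i\partial_t+\Delta)\wtilde u_{\mathrm{ap}}=e^{it\Delta}\F^{-1}[\tfrac{\lambda}{2t}|y|^{-\beta}|\varphi|^{\alpha}g]$; on the other hand, the homogeneity identity $|x|^{-\beta}|M(t)D(t)g|^{\alpha}=\tfrac1{2t}(|y|^{-\beta}|\varphi|^{\alpha})(x/2t)$ — which is exactly where $\alpha d/2+\beta=1$ is used — yields $N(M(t)D(t)g)=e^{it\Delta}\F^{-1}[\tfrac{\lambda}{2t}|y|^{-\beta}|\varphi|^{\alpha}g]-M(t)D(t)[(\F M(t)\F^{-1}-\Id)(\tfrac{\lambda}{2t}|y|^{-\beta}|\varphi|^{\alpha}g)]$. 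Hence $\wtilde{\mathcal E}=M(t)D(t)[(\F M(t)\F^{-1}-\Id)(\tfrac{\lambda}{2t}|y|^{-\beta}|\varphi|^{\alpha}g)]+\big(N(M(t)D(t)g)-N(\wtilde u_{\mathrm{ap}})\big)$. The first summand has $L^2$-norm $\lesssim t^{-1-\delta/2}\|\,|y|^{-\beta}|\varphi|^{\alpha}\varphi\,e^{-i\Psi\log t}\|_{\dot H^{\delta}}\lesssim t^{-1-\delta/2}\langle\log t\rangle^{C}$, and it is precisely this $\dot H^{\delta}$-norm — of a product involving $|x|^{-\beta}$ to a fractional power and the Hölder function $|\varphi|^{\alpha}$ — whose finiteness forces the vanishing/regularity hypotheses of Assumption \ref{assumption_A}; the second summand is $O(|x|^{-\beta}(|M(t)D(t)g|^{\alpha}+|R|^{\alpha})|R|)$ with $R:=\wtilde u_{\mathrm{ap}}-M(t)D(t)g$, estimated using $\||x|^{-\beta}|M(t)D(t)g|^{\alpha}\|_{L^{\infty}}\lesssim \ep^{\alpha}t^{-1}$, $\|R(t)\|_{L^2}\lesssim t^{-\delta/2}\langle\log t\rangle^{C}$ and Hardy/Sobolev estimates in Lorentz spaces ($|x|^{-\beta}\in L^{d/\beta,\infty}$, valid since $\beta<d/2$). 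Altogether $\|\wtilde{\mathcal E}(t)\|_{L^2}\lesssim t^{-1-\delta/2}\langle\log t\rangle^{C}$, so $\big\|\int_t^{\infty}e^{i(t-s)\Delta}\wtilde{\mathcal E}(s)\,ds\big\|_{L^2}\lesssim t^{-\delta/2}\langle\log t\rangle^{C}\lesssim t^{-\theta}$.

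For the nonlinearity, $N(\wtilde u_{\mathrm{ap}}+v_1)-N(\wtilde u_{\mathrm{ap}}+v_2)$ is bounded by $|x|^{-\beta}|\wtilde u_{\mathrm{ap}}|^{\alpha}|v_1-v_2|+|x|^{-\beta}(|v_1|^{\alpha}+|v_2|^{\alpha})|v_1-v_2|$; since $|x|^{-\beta}|\wtilde u_{\mathrm{ap}}|^{\alpha}\lesssim \ep^{\alpha}t^{-1}+|x|^{-\beta}|R|^{\alpha}$, the first, exactly critical, piece contributes after the time integration $\lesssim \tfrac{\ep^{\alpha}}{\theta}\,t^{-\theta}\sup_{s\ge t}s^{\theta}\|v_1(s)-v_2(s)\|_{L^2}$, a genuine contraction once $\ep$ is small, while the $|x|^{-\beta}|R|^{\alpha}$ correction and the term $|x|^{-\beta}|v_j|^{\alpha}|v_1-v_2|$ are handled by the same Hardy/Sobolev-in-Lorentz machinery and the auxiliary norm, with $d/2+\beta/\alpha<\delta$ ensuring that the Sobolev exponents line up so that these are $o(t^{-\theta})$ on the ball for $T$ large. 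Combining this with the error bound of the previous paragraph shows the Duhamel map is a contraction for $T$ large and $\ep$ small; its fixed point is the desired half-line solution, which is then continued to $\R$ and shown to be the unique such solution as indicated above.

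The main difficulty is this error estimate in the presence of the singular, low-regularity logarithmic phase $\Psi=\tfrac{\lambda}2|x|^{-\beta}|\varphi|^{\alpha}$: with the literal profile \eqref{theorem_1_3} one computes, via $(i\partial_t+\Delta)[M(t)D(t)h(t)]=M(t)D(t)[i\partial_t h+\tfrac1{4t^2}\Delta_y h]$ and $\alpha d/2+\beta=1$, that $(i\partial_t+\Delta)u_{\mathrm{ap}}-N(u_{\mathrm{ap}})=\tfrac1{4t^2}M(t)D(t)\Delta_y g(t)$, which fails to lie in $L^2$ because $g$ inherits only $\dot H^{\delta}$ regularity with $\delta<2$ and $|y|^{-\beta}|\varphi|^{\alpha}$ is singular at the origin; the remedy of passing to the smoother $\wtilde u_{\mathrm{ap}}$ trades $\delta$ derivatives for the decay $t^{-\delta/2}$, but then demands sharp control of $\dot H^{\delta}$-norms of products of $|x|^{-\beta}$ to a fractional power with $|\varphi|^{\alpha}$, which is precisely what the hypotheses on the vanishing order of $\varphi$ — and the chain $d/2+\beta/\alpha<2\theta<\delta<\alpha+1$ — are designed to afford. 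A second, genuinely critical, point is the linear-in-$v$ term with its $t^{-1}$ coefficient, which is closed only thanks to the smallness of $\||x|^{-\beta/\alpha}\varphi\|_{L^{\infty}}$.
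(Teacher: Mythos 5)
Your proposal is correct and follows essentially the same route as the paper: your smoother companion $\wtilde u_{\mathrm{ap}}=e^{it\Delta}\F^{-1}[\varphi e^{-i\Psi\log t}]$ is exactly the paper's $u_{\mathrm{ap}}+\mathcal R w$, your error term $\wtilde{\mathcal E}$ reproduces their $\tfrac{1}{2t}\mathcal R N(w)$ correction (the term $\mathcal K_2$), and the contraction in $\sup_{t\ge T} t^{\theta}\norm{\cdot}_{L^2}$ plus an auxiliary space-time (Strichartz) norm — the critical linear-in-$v$ piece closed by smallness of $\norm{|x|^{-\beta/\alpha}\varphi}_{L^\infty}$, the superlinear piece by $2\alpha\theta>1$ — is their Propositions \ref{proposition_3_1} and \ref{proposition_3_2}, followed by the same backward extension via $L^2$ well-posedness. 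The one hard estimate you assert rather than prove, namely the $\dot H^{\delta}$ bound on $|x|^{-\beta}|\varphi|^{\alpha}\varphi\, e^{-i\Psi\log t}$ under Assumption \ref{assumption_A}, is precisely the paper's Lemma \ref{lemma_w}, which the paper's own proof of the theorem likewise invokes as a black box and establishes separately in Section \ref{section_w}.
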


Note that an analogous result also holds for the negative time direction. This remark also applies to Theorem \ref{theorem_2} below. We further make a few comments on this theorem.

\begin{remark}[Modified wave operator]
\label{remark_wave_operator}
Let $u_+=\F^{-1}\varphi$ and $S(t,x)=(\lambda/2) |x|^{-\beta}|\F u_+(x)|^\alpha \log t$. Then $u_{\mathrm{ap}}$ can be rewritten in the form
\begin{align*}
u_{\mathrm{ap}}(t,x)
=\mathcal M(t)\mathcal D(t)\F e^{-iS(t,-i\nabla)} u_+,
\end{align*}
where $\M(t)$ and $\D(t)$ are the usual modulation and dilation operators (see \eqref{dilation} below).  Combining with the factorization formula $e^{it\Delta}=\M(t)\D(t)\F\M(t)$ (see \eqref{MDFM}), we have
$$
u_{\ap}=e^{it\Delta}e^{-iS(t,-i\nabla)}u_++o(1)
$$
 as $t\to\infty$ in $L^2$. Theorem \ref{theorem_1_1} thus shows that, given a $u_+\in X_\ep$, there exists a global solution $u(t)$ such that $e^{iS(t,-i\nabla)}e^{-it\Delta}u(t)$ converges to $u_+$ as $t\to \infty$ in $L^2$ provided $\ep>0$ is sufficiently small, where $X_\ep$ is defined by
$$
X_\ep:=\F^{-1}\{\varphi\in H^\delta\ |\ \text{$\varphi$ satisfies Assumption A and $\norm{|x|^{-\beta/\alpha}\varphi}_{L^\infty}\le \ep$}\}.
$$
 In particular, the modified wave operator $W^+_{\mathrm{\mathop{INLS}}}:X_\ep\ni u_+\mapsto u(0)\in L^2$ exists. 
\end{remark}

\begin{remark}
\label{remark_2}
If $\varphi\in H^\delta$ is supported away from the origin then Assumption \ref{assumption_A} holds. Precisely, for any $\chi\in C_0^\infty(\R^d)$ satisfying $\chi\equiv1$ near the origin and any $\psi\in H^\delta$ with $\delta>d/2$,  $\varphi=(1-\chi)\psi$ satisfies Assumption \ref{assumption_A} as follows: Sobolev's embedding implies $|x|^{-\beta/\alpha}\varphi\in L^\infty$ and $|x|^{-1}\varphi\in L^2$. Moreover, for any $\sigma,\gamma\ge0$, $\<D\>^\sigma |x|^{-\gamma}(1-\chi(x))\<D\>^{-\sigma}$ is a pseudodifferential operator with a $S_{1,0}^0$-symbol and hence is bounded on $L^2$ by the Calder\'on--Vaillancourt theorem (see \cite[Chapter 8]{Grafakos_2}). With $(\sigma,\gamma)=(\beta/\alpha,\delta)$ if $d=1$ and $(\sigma,\gamma)=(1,\delta-1)$ if $d=2,3$, we thus have
$$
\norm{|x|^{-\gamma}\varphi}_{\dot H^{\sigma}}\lesssim \norm{\varphi}_{H^{\sigma}}
$$
and Assumption \ref{assumption_A} follows. It is also worth noting that, to ensure $|x|^{-\beta/\alpha}\varphi\in \dot H^\delta$ for general $\varphi\in H^\delta$ in $d=1$,  it is enough to assume that $|x|^{-\beta/\alpha-1}\varphi$ and $|x|^{-\beta/\alpha}\nabla \varphi$ belong to $L^2$ . Similarly, if $\varphi\in H^\delta$, $|x|^{-2}\varphi\in L^2$ and $|x|^{-1}\nabla\varphi \in L^2$ then $|x|^{-1}\varphi\in H^{\delta-1}$ in $d=2,3$. 
\end{remark}

As a supplementary result, we next state a result on the short-range scattering, which shows the condition  $\alpha d/2+\beta=1$ is critical from the view point of the scattering. Let $$\Sigma=\{u\in L^2(\R^d)\ |\ \norm{u}_{L^2}+\norm{\nabla u}_{L^2}+\norm{xu}_{L^2}<\infty\}.$$

\begin{theorem}
\label{theorem_C_1}
Let $d\ge1$ and $\lambda>0$. Suppose that 
\begin{align}
\label{theorem_C_1_1}
0<\alpha\le \frac{4}{d},\quad 0\le \beta< \min(2,d),\quad 1<\frac{\alpha d}{2}+\beta\le 2.
\end{align}
Then, for any $u_0\in \Sigma$, there exists a unique global solution $u\in C(\R;\Sigma)$ to \eqref{INLS} with the initial condition $u(0)=u_0$ such that $u$ scatters to a free solution: there exist $u_{\pm}\in L^2$ such that
\begin{align}
\label{theorem_C_1_2}
\lim_{t\to\pm\infty}\norm{u(t)-e^{it\Delta}u_\pm}_{L^2}=0.
\end{align}
\end{theorem}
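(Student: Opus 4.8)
The plan is to run the by-now classical scheme for short-range scattering of $\Sigma$-solutions to defocusing Schr\"odinger equations, the only genuinely new point being the treatment of the inhomogeneous nonlinearity $N(u)=\lambda|x|^{-\beta}|u|^\alpha u$ through Lorentz-space estimates. First I would establish local well-posedness of \eqref{INLS} in $\Sigma$ by a contraction argument based on the Strichartz estimates for $e^{it\Delta}$, applied simultaneously to $u$, $\nabla u$ and $J(t)u:=(x+2it\nabla)u$, using that $J(t)$ commutes with $i\partial_t+\Delta$ and that $N$ is gauge invariant. The estimates that differ from the homogeneous case are those for $N(u)$ and $J(t)N(u)$ in the dual Strichartz spaces: writing $|x|^{-\beta}\in L^{d/\beta,\infty}(\R^d)$ (legitimate as $0<\beta<\min(2,d)\le d$) and applying H\"older in Lorentz spaces together with Sobolev embedding, one bounds these by the Strichartz norms of $u$; the hypotheses $0<\alpha\le 4/d$ (energy-subcriticality) and $\beta<\min(2,d)$ are precisely what keep all exponents Schr\"odinger-admissible. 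This produces a unique maximal solution $u\in C((-T_-,T_+);\Sigma)$ with a blow-up alternative.

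Next I would prove that the solution is global. Since $\lambda>0$ the equation is defocusing: mass conservation gives $\norm{u(t)}_{L^2}=\norm{u_0}_{L^2}$, and conservation of the energy
\[
E(u(t))=\tfrac12\norm{\nabla u(t)}_{L^2}^2+\tfrac{\lambda}{\alpha+2}\int_{\R^d}|x|^{-\beta}|u(t,x)|^{\alpha+2}\,dx,
\]
both of whose terms are nonnegative, gives $\sup_{t}\norm{u(t)}_{H^1}<\infty$, hence global existence in $H^1$. To propagate the weight I would use a virial/pseudoconformal computation: using again that $J(t)$ commutes with $i\partial_t+\Delta$ and the gauge invariance of $N$, one derives an identity of the form
\[
\frac{d}{dt}\Big(\norm{J(t)u}_{L^2}^2+ct^2\!\!\int_{\R^d}\!|x|^{-\beta}|u|^{\alpha+2}dx\Big)=c(4-\alpha d-2\beta)\,t\!\!\int_{\R^d}\!|x|^{-\beta}|u|^{\alpha+2}dx,\quad c=c(\alpha,\lambda)>0,
\]
whose right-hand side has a definite sign because $\alpha d/2+\beta\le 2$; a Gronwall argument then bounds $\norm{J(t)u(t)}_{L^2}$ by at worst a slowly growing power of $t$ (and uniformly in $t$ at the $L^2$-critical value $\alpha d/2+\beta=2$), which together with the previous step yields $u\in C(\R;\Sigma)$.

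For the scattering, from these bounds and the factorization $J(t)=2it\,e^{i|x|^2/(4t)}\nabla\,e^{-i|x|^2/(4t)}$, the Gagliardo--Nirenberg inequality applied to $e^{-i|x|^2/(4t)}u(t)$ gives the time-decay $\norm{u(t)}_{L^r}\lesssim |t|^{-d(\frac12-\frac1r)}$, up to a mild factor, for admissible $r$. By Duhamel, $e^{-it\Delta}u(t)-e^{-is\Delta}u(s)=-i\int_s^t e^{-i\tau\Delta}N(u(\tau))\,d\tau$, so by the Strichartz estimates it suffices to show that $N(u)$ belongs to a dual Strichartz space on $[1,\infty)$ (and, symmetrically, on $(-\infty,-1]$): then the right-hand side tends to $0$ as $s,t\to+\infty$ and $e^{-it\Delta}u(t)$ converges in $L^2$ to some $u_+$, giving \eqref{theorem_C_1_2}. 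Estimating that space-time norm of $N(u)=\lambda|x|^{-\beta}|u|^\alpha u$ by H\"older in Lorentz spaces and inserting the decay above — closing the argument by a continuity/bootstrap argument on $[T,\infty)$ with $T$ large — the relevant time-integral is governed by an exponent equal to $\alpha d/2+\beta$, and the hypothesis $1<\alpha d/2+\beta\le 2$ is exactly what makes it converge.

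I expect the scattering step to be the main obstacle. The singular-and-decaying weight $|x|^{-\beta}$ forces Lorentz-space H\"older inequalities throughout, and the constraint $\beta<\min(2,d)$ is what is needed to keep all exponents admissible once the nonlinearity is differentiated (via $\nabla$ or $J(t)$); moreover, away from the $L^2$-critical case $\norm{J(t)u(t)}_{L^2}$ need not remain bounded, so the pointwise-in-time decay of $u$ by itself does not make $\norm{N(u(t))}_{L^2}$ integrable on $[1,\infty)$. One therefore has to extract the scattering from space-time Strichartz norms, with the margin $\alpha d/2+\beta>1$ compensating the loss, and the careful bookkeeping of exponents there is the delicate part of the argument.
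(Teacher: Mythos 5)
Your well-posedness and global-existence steps are fine and essentially parallel the paper's (which quotes the $H^1$ theory from the literature, propagates the weight exactly as in Cazenave's Lemma 6.5.2, and uses the same monotone pseudo-conformal quantity, written after the change of variables as $\frac{d}{dt}E(t)=-\sigma(2t)^{-\sigma-1}E(t)\ge0$ with $\sigma=\alpha d/2+\beta-2\in(-1,0]$). The genuine gap is in your scattering step, and it affects part of the claimed parameter range. Set $m=\alpha d/2+\beta\in(1,2]$. Your own identity, after Gronwall, gives $\norm{J(t)u}_{L^2}\lesssim t^{1-m/2}$, which is unbounded for $m<2$; Gagliardo--Nirenberg applied to $e^{-i|x|^2/(4t)}u$ then yields only $\norm{u(t)}_{L^r}\lesssim t^{-\frac m2\,d(\frac12-\frac1r)}$, i.e. the free decay rate multiplied by the loss factor $m/2<1$ (this is the ``mild factor'' you mention, and it is not mild near $m=1$). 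Inserting this into the Lorentz--H\"older bound $\norm{N(u)}_{L^2}\lesssim\norm{|x|^{-\beta}}_{L^{d/\beta,\infty}}\norm{u}_{L^r}^{\alpha+1}$ with $\frac{\alpha+1}{r}=\frac12-\frac\beta d$ gives $\norm{N(u(t))}_{L^2}\lesssim t^{-m^2/2}$, which is integrable on $[1,\infty)$ only for $m>\sqrt2$; replacing $L^1_tL^2_x$ by another dual admissible norm $L^{\tilde p'}_tL^{\tilde q'}_x$ leads to the condition $\frac m2\,\tilde p'\,(m-2/\tilde p)>1$, which again has no admissible solution unless $m>\sqrt2$. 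A bootstrap does not repair this: the only lossless decay available is that of the potential energy, $\int|x|^{-\beta}|u|^{\alpha+2}dx\lesssim t^{-m}$, and feeding it back into the monotonicity identity reproduces the same growth $t^{2-m}$ of the pseudo-conformal quantity. This is the classical obstruction (the gap between the long-range threshold and the Strauss-type exponent), so your Duhamel--Strichartz Cauchy criterion does not close on the whole range $1<\alpha d/2+\beta\le2$.

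The paper avoids this entirely by the Tsutsumi--Yajima device: it conjugates by the pseudo-conformal transform $v(t,x)=(2it)^{-d/2}e^{i|x|^2/(4t)}\overline{u}(1/(4t),x/(2t))$, so that scattering of $u$ as $t\to+\infty$ becomes strong $L^2$-continuity of $v$ at $t=0$, where $v$ solves $i\partial_tv+\Delta v=|2t|^{\sigma}N(v)$. The uniform bounds $\norm{v(t)}_{L^2}=\mathrm{const}$, $\norm{\nabla v(t)}_{L^2}\lesssim t^{\sigma/2}$ and $\int|x|^{-\beta}|v|^{\alpha+2}dx\lesssim1$ coming from the monotone energy already suffice to extract a \emph{weak} $L^2$-limit $v(0)$ as $t\to+0$, and strong convergence is then recovered from $\norm{v(t)-v(0)}_{L^2}^2\le|\langle v(t)-v(0),v(t)\rangle|+|\langle v(t)-v(0),v(0)\rangle|$ together with the quantitative bound $|\langle v(t)-v(0),v(t)\rangle|\lesssim t^{\sigma+1}\to0$, which requires only $\sigma>-1$, i.e. $m>1$. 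No integrability of $N(u)$ in a dual Strichartz space at infinity is ever used. To fix your argument you should either replace the Cauchy-criterion step by this weak-limit/duality upgrade, or restrict the theorem to $m$ above the Strauss-type threshold.
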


We next consider the equation \eqref{NLSI}. To state the result, we introduce a few notation. Let $$\L=-\Delta-\frac{(n-2)^2}{4|x|^2}$$ be the Schr\"odinger operator associated with \eqref{NLSI} and $\H^s=\<\L\>^{-s/2}L^2(\R^n)$ the Sobolev space adapted to $\L$ defined by the norm
$
\norm{f}_{\H^s}=\norm{\<\mathcal L\>^{s/2}f}_{L^2}
$. 
A remarkable fact is that $-\L$ is unitarily equivalent to the two-dimensional Laplacian on the space of radial functions, namely
\begin{align}
\label{equivalence}
\L f=-\Lambda^*\Delta_{\R^2}\Lambda f
\end{align}
for any radial $f\in \mathcal H^2$, where $\Lambda$ is a unitary from $L^2_r(\R^n)$ to $L^2_r(\R^2)$ given in \eqref{Lambda} below. We will give in Section \ref{section_NLSI} the precise definition and several basic properties of $\L$, as well as the characterization of $\H^s_r$, the space of all radial functions in $\H^s$. 

Using this equivalence, the analysis for \eqref{NLSI} under the radial symmetry can be reduced to that for \eqref{INLS}. More precisely, \eqref{NLSI} for radial functions is transformed into \eqref{INLS} with 
$$
d=2,\quad\alpha=\tilde\alpha,\quad \beta=\tilde\beta+\alpha(n-2)/2,\quad \lambda=c_n^{-\tilde\alpha}\tilde\lambda,
$$
where $c_n:={(2\pi)}^{-1/2}|\mathbb S^{n-1}|^{1/2}$. In particular, as an application of Theorem \ref{theorem_1}, we have the following result on the modified scattering. As in case of \eqref{INLS}, we impose the following condition: 

\begin{assumption}
\label{assumption_B}
$|x|^{-\tilde \beta/\tilde \alpha}\tilde \varphi\in L^\infty$ and $|x|^{-1}\tilde \varphi\in \H^{\delta-1}_r$. 
\end{assumption}
As with Theorem \ref{theorem_1}, Assumption \ref{assumption_B} holds if $\tilde \varphi\in \H_r^\delta$ is supported away from the origin. 

\begin{theorem}	
\label{theorem_2}
Suppose that $n\ge3$, $\tilde \lambda\in \R$ and $(\tilde \alpha,\tilde \beta)$ satisfies
\begin{align}
\label{long_range_NLSI}
-\frac{\tilde \alpha(n-2)}{2}\le \tilde \beta<1-\frac{\tilde \alpha(n-2)}{2},\quad \frac{\tilde \alpha n}{2}+\tilde \beta=1,\quad\frac n2+\frac{\tilde \beta}{\tilde \alpha}<\tilde \alpha+1.
\end{align}
Let $n/2+\tilde \beta/\tilde \alpha<2\theta<\delta<\tilde \alpha+1$. Then there exists $\ep>0$ such that for any radial $\tilde \varphi\in \H^\delta_r$ satisfying Assumption \ref{assumption_B} and $\norm{|x|^{-\tilde \beta/\tilde \alpha}\tilde \varphi}_{L^\infty}\le \ep$,  there exists a unique radial solution $\tilde u\in C(\R;L^2_r(\R^n))$ to \eqref{NLSI} satisfying the following asymptotic condition: 
\begin{align}
\label{theorem_2_1}
\norm{\tilde u(t)-\tilde u_{\ap}(t)}_{L^2(\R^n)}\lesssim t^{-\theta},\quad t\ge1,
\end{align}
where the asymptotic profile $\tilde u_{\ap}$ is defined by
\begin{align}
\label{theorem_2_2}
\tilde u_{\ap}(t,x)=\frac{1}{(2it)^{n/2}}\tilde \varphi\left(\frac{x}{2t}\right)\exp\left(i\frac{|x|^2}{4t}-i\frac{\tilde\lambda}{2}\left|\frac{x}{2t}\right|^{-\tilde \beta}\left|\tilde \varphi\left(\frac{x}{2t}\right)\right|^{\tilde \alpha}\log t\right).
\end{align}
\end{theorem}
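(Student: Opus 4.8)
The plan is to deduce Theorem \ref{theorem_2} directly from Theorem \ref{theorem_1} by exploiting the unitary equivalence \eqref{equivalence}. First I would record the precise correspondence: if $\tilde u$ is a radial solution to \eqref{NLSI} on $\R^n$, then $u:=\Lambda\tilde u$ is a radial function on $\R^2$, and applying $\Lambda$ to \eqref{NLSI} turns $-\L=\Lambda^*\Delta_{\R^2}\Lambda$ into $\Delta_{\R^2}$, so that $u$ solves $i\partial_t u+\Delta u=\Lambda\tilde N(\Lambda^{-1}u)$. The key algebraic point is to compute this transformed nonlinearity. Writing $\Lambda$ explicitly (it is essentially multiplication by a power of $|x|$ composed with the identification of radial profiles, with the normalizing constant $c_n$), one checks that
\begin{align*}
\Lambda\big(\tilde\lambda|x|^{-\tilde\beta}|\tilde u|^{\tilde\alpha}\tilde u\big)=c_n^{-\tilde\alpha}\tilde\lambda\,|x|^{-\tilde\beta-\tilde\alpha(n-2)/2}|u|^{\tilde\alpha}u,
\end{align*}
which is exactly $N(u)=\lambda|x|^{-\beta}|u|^\alpha u$ on $\R^2$ with the parameters $d=2$, $\alpha=\tilde\alpha$, $\beta=\tilde\beta+\alpha(n-2)/2$, $\lambda=c_n^{-\tilde\alpha}\tilde\lambda$ announced in the excerpt. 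Thus radial solutions of \eqref{NLSI} are in bijection, via the unitary $\Lambda$, with radial solutions of \eqref{INLS} in $d=2$ for these parameters.

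Next I would verify that the hypotheses of Theorem \ref{theorem_1} are met for this transformed data. The scaling relations $\tilde\alpha n/2+\tilde\beta=1$ and $d=2$ give $\alpha d/2+\beta=\alpha+\tilde\beta+\alpha(n-2)/2=\tilde\alpha n/2+\tilde\beta=1$, so the critical relation $\alpha d/2+\beta=1$ holds. The constraint $-\tilde\alpha(n-2)/2\le\tilde\beta<1-\tilde\alpha(n-2)/2$ translates into $0\le\beta<1$, and combined with $d=2$ this yields $0<\beta<\min\{d/2,1\}$ once one checks $\beta>0$ (the endpoint $\tilde\beta=-\tilde\alpha(n-2)/2$ gives $\beta=0$, which must be excluded or handled separately — I would note that Theorem \ref{theorem_1} as stated requires $\beta>0$, so strictly the case $(n,\tilde\alpha,\tilde\beta)=(3,2/3,0)$ corresponds to $\beta=2/3>0$ and is fine; one should double-check that the genuinely problematic endpoint is not actually attained under \eqref{long_range_NLSI}). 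The remaining condition $d/2+\beta/\alpha<\alpha+1$ becomes, after substituting $\beta/\alpha=\tilde\beta/\tilde\alpha+(n-2)/2$ and $d=2$, precisely $n/2+\tilde\beta/\tilde\alpha<\tilde\alpha+1$, which is the third inequality in \eqref{long_range_NLSI}. Similarly the threshold hypotheses $n/2+\tilde\beta/\tilde\alpha<2\theta<\delta<\tilde\alpha+1$ transform into $d/2+\beta/\alpha<2\theta<\delta<\alpha+1$, and $\delta\le d=2$ needs to be checked from $\delta<\tilde\alpha+1\le 2$ (since $\tilde\alpha\le 2/n\le 2/3<1$). Finally, Assumption \ref{assumption_B} on $\tilde\varphi$ must be shown to imply Assumption \ref{assumption_A} on $\varphi:=\Lambda\tilde\varphi$: using the characterization of $\H^s_r$ via $\Lambda$ (to be established in Section \ref{section_NLSI}), $\tilde\varphi\in\H^\delta_r$ gives $\varphi\in H^\delta_r(\R^2)$, and the weighted conditions $|x|^{-\tilde\beta/\tilde\alpha}\tilde\varphi\in L^\infty$, $|x|^{-1}\tilde\varphi\in\H^{\delta-1}_r$ match $|x|^{-\beta/\alpha}\varphi\in L^\infty$, $|x|^{-1}\varphi\in H^{\delta-1}$ after tracking the power of $|x|$ picked up by $\Lambda$ (the same power that shifts $\tilde\beta$ to $\beta$). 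The smallness $\norm{|x|^{-\tilde\beta/\tilde\alpha}\tilde\varphi}_{L^\infty}\le\ep$ passes to $\norm{|x|^{-\beta/\alpha}\varphi}_{L^\infty}\le c_n^{-1}\ep$ up to the harmless constant.

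With all hypotheses verified, Theorem \ref{theorem_1} produces a unique global $u\in C(\R;L^2_r(\R^2))$ solving \eqref{INLS} with $\norm{u(t)-u_{\ap}(t)}_{L^2(\R^2)}\lesssim t^{-\theta}$, where $u_{\ap}$ is given by \eqref{theorem_1_3} with $\varphi$ replaced by $\Lambda\tilde\varphi$. Setting $\tilde u:=\Lambda^{-1}u\in C(\R;L^2_r(\R^n))$ gives the desired solution to \eqref{NLSI}, and since $\Lambda$ is unitary, $\norm{\tilde u(t)-\Lambda^{-1}u_{\ap}(t)}_{L^2(\R^n)}=\norm{u(t)-u_{\ap}(t)}_{L^2(\R^2)}\lesssim t^{-\theta}$. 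It then remains to identify $\Lambda^{-1}u_{\ap}$ with $\tilde u_{\ap}$ from \eqref{theorem_2_2}. This is a direct computation: $\Lambda$ commutes with the dilation $x\mapsto x/(2t)$ up to the explicit weight, it does not touch the modulation factor $e^{i|x|^2/4t}$ (which depends only on $|x|$), and the weight transforms $|\frac{x}{2t}|^{-\beta}$ back to $|\frac{x}{2t}|^{-\tilde\beta}$ inside the log-phase while the amplitude $\varphi(\frac{x}{2t})=\Lambda\tilde\varphi(\frac{x}{2t})$ unwinds to $\tilde\varphi(\frac{x}{2t})$ times the same weight, whose power matches the $(2it)^{-d/2}$ versus $(2it)^{-n/2}$ discrepancy. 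Uniqueness of $\tilde u$ in $C(\R;L^2_r(\R^n))$ follows from uniqueness of $u$ in $C(\R;L^2_r(\R^2))$ via the bijection $\Lambda$.

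The main obstacle I anticipate is bookkeeping rather than conceptual: one must pin down the exact form of the unitary $\Lambda$ and its inverse (including the constant $c_n$ and the power $|x|^{(n-2)/2}$ it carries), verify carefully that $\Lambda$ intertwines the full nonlinear structure — not just the linear operator $\L$ — and confirm that the Sobolev space identification $\H^s_r(\R^n)\cong H^s_r(\R^2)$ under $\Lambda$, together with the weighted variants in Assumption \ref{assumption_B}, is exactly what is needed; all of this relies on the results of Section \ref{section_NLSI}, which I would cite. A subtle point worth flagging is the boundary behavior: one should make sure that the range \eqref{long_range_NLSI} does not force $\beta=0$ in the $d=2$ problem (where Theorem \ref{theorem_1} requires $\beta>0$), and likewise that $\delta\le 2$ is genuinely guaranteed; both follow from $\tilde\alpha\le 2/n$ and the displayed inequalities, but they deserve an explicit line.
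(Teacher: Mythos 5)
Your proposal follows essentially the same route as the paper: transfer the radial problem to \eqref{INLS} with $d=2$, $\alpha=\tilde\alpha$, $\beta=\tilde\beta+\tilde\alpha(n-2)/2$, $\lambda=c_n^{-\tilde\alpha}\tilde\lambda$ via the unitary $\Lambda$ (the paper additionally inserts the phase $\varphi=i^{-(n-2)/2}\Lambda\tilde\varphi$ to match the $(2it)^{-n/2}$ versus $(2it)^{-1}$ normalization), verify the hypotheses of Theorem \ref{theorem_1} including Assumption \ref{assumption_A} through the identification $\Lambda\H^s_r=H^s_r(\R^2)$ of Lemma \ref{lemma_5_2}, and pull the solution back; your flagging of the endpoint $\beta=0$ and of $\delta\le 2$ is apt. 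One small slip: $\tilde\alpha\le 2/n$ is false in general (the H\'enon case has $\tilde\alpha=1$ for any $n\ge3$); the correct bound is $\tilde\alpha\le1$, which follows from $\tilde\beta\ge-\tilde\alpha(n-2)/2$ together with $\tilde\alpha n/2+\tilde\beta=1$ and still gives $\delta<\tilde\alpha+1\le2=d$ as required.
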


We make a few comments on Theorem \ref{theorem_2}. 
\begin{remark}
\label{remark_1_6}
The assumption \eqref{long_range_NLSI} holds if and only if the following conditions are satisfied: 
$$
\frac{\sqrt{5}-1}{2}<\tilde \alpha\le 1,\quad -\frac{n-2}{2}\le \tilde \beta<\frac{n+4-n\sqrt5}{4},\quad \frac{\tilde \alpha n}{2}+\tilde \beta=1.
$$
The following two examples are of particular interest:
\begin{itemize}
\item Let $(\tilde \alpha,\tilde \beta,n)=(2/3,0,3)$. Then $\tilde N(\tilde u)$ becomes the standard long-range nonlinearity: $$\tilde N(\tilde u)=\tilde \lambda |\tilde u|^{2/3}\tilde u.$$ Hence \eqref{long_range_NLSI} covers the NLS with the critical  inverse-square potential in $n=3$. This is the first result on the modified scattering for the NLS with inverse-square potentials. 

\item Let $n\ge3$ and $(\tilde \alpha,\tilde \beta)=(1,-(n-2)/2)$. Then $\tilde N(\tilde u)$ becomes  a H\'enon-type nonlinearity: $$\tilde N(\tilde u)=\tilde \lambda |x|^{\frac{n-2}{2}}|\tilde u|\tilde u.$$
Such a type of nonlinearity has been extensively studied in the context of nonlinear elliptic and parabolic equations. However, it seems to be less understood in case of nonlinear Schr\"odinger equations. Moreover, it is also worth noting that we can take $n$ arbitrary large, while the space dimension $n$ is usually restricted to less than or equal to $3$ in case of the modified scattering for NLS with the power-type nonlinearity $|u|^{2/n}u$. 
\end{itemize}
\end{remark}

\begin{remark}[Modified wave operator]
As with Theorem \ref{theorem_1}, setting $\tilde u_+=\F^{-1}\tilde\varphi$, Theorem \ref{theorem_2} implies the existence of the modified wave operator $\tilde W^+_{\mathrm{\mathop{INLS}}}:\tilde u_+\mapsto \tilde u(0)$. 
\end{remark}

\begin{remark}
The potential $-\frac{(n-2)^2}{4|x|^{2}}$ is of short-range type in the sense that the wave operator $$W^+f:=\slim_{t\to\infty}e^{it\L}e^{it\Delta}f,\quad f\in L^2_r(\R^n)$$ exists and becomes a unitary on $L^2_r(\R^n)$. Indeed, the formulas \eqref{equivalence} and \eqref{MDFM} imply \begin{align*} W^+f&= \slim_{t\to\infty}\Lambda^*\M(t)^*\F_{\R^2}^*\D_{\R^2}(t)^*\M(t)^*\Lambda \M(t) \D_{\R^n}(t)\F_{\R^n}\M(t) f\\&=\slim_{t\to\infty}\Lambda^*\F_{\R^2}^*\D_{\R^2}(t)^*\Lambda \D_{\R^n}(t)\F_{\R^n}f\\&=i^{-(n-2)/2}\Lambda^*\F_{\R^2}^*\Lambda \F_{\R^n}f\end{align*} for $f\in L^2_r(\R^n)$, where $\D_{\R^\ell}(t)$ and $\F_{\R^\ell}$ are the $\ell$-dimensional dilation and Fourier transform (see Section \ref{section_2}), and we have used the fact $\M(t)\to 1$ strongly on $L^2$ as $t\to \infty$.  Hence the global dynamics of $e^{-it\L}$ is asymptotically governed by the free evolution $e^{it\Delta}$ at least under the radial symmetry; in fact, one can prove the existence and unitarity of $W^+$  without radial symmetry by using a different method (see \cite{Mizutani}). This is the reason why the asymptotic profile \eqref{theorem_2_2} is essentially the same as \eqref{theorem_1_2} and no influence of the linear potential $-\frac{(n-2)^2}{4|x|^{2}}$ appears.
\end{remark}

\subsection{Strategy of the proof}
Here we briefly outline the proof of Theorem \ref{theorem_1}, explaining especially where the assumptions come from. The proof basically follows a similar line as that of \cite{HaNa2006,HWN}. By a standard argument based on the global well-posedness in $L^2$ proved by \cite{Guzman}, the proof is reduced to construct a unique solution to an integral equation for sufficiently large $t\ge1$ by the contraction mapping theorem. The integral equation is roughly of the form 
$$u(t)=u_{\mathrm{ap}}(t)+i\int_t^\infty e^{i(t-s)\Delta}\left(N(u(s))-N(u_{\mathrm{ap}}(s))\right)ds+\text{(Remainder term)}.$$
To deal with the Duhamel integral term, we use the Strichartz estimates and the decay estimate $$\norm{|x|^{-\beta}|u_{\mathrm{ap}}(t)|^\alpha}_{L^\infty}\lesssim t^{-1}\norm{|x|^{-\beta} |\varphi|^\alpha}_{L^\infty}.$$To ensure the integrability in $s$ of an appropriate $L^q_x$-norm of $N(v(s))-N(u_{\mathrm{ap}}(s))$ appeared in the dual Strichartz norms, we need to work with a complete metric space in which it holds that $$\norm{u(t)-u_{\mathrm{ap}}(t)}_{L^2}\lesssim t^{-\theta}$$ with some $2\theta>d/2+\beta/\alpha$. Then the remainder terms should be small enough and $O_{L^2}(t^{-\theta})$ as $t\to \infty$. To ensure these properties, we show that the remainder terms are $O_{L^2}(t^{-\delta/2})$ with some $\delta>2\theta$. Let us explain in more details for one of the remainder terms, which is of the form $$\mathcal M(t)\mathcal D(t)\F(\mathcal M(t)-1)\F^{-1}N(\varphi(x))e^{i\mu |x|^{-\beta}|\varphi(x)|^\alpha},\quad \mu:=(\lambda/2) \log t,\ \mathcal M(t):=e^{\frac{i|x|^2}{4t}}.$$Using the bound $|\mathcal M(t)-1|\lesssim t^{-\delta/2}|x|^\delta$, the proof is then reduced to show the nonlinear estimate
\begin{align}
\label{outline_1}
\norm{N(\varphi(x))e^{\mu |x|^{-\beta}|\varphi(x)|^\alpha}}_{H^\delta}\le C_\varphi \<\mu\>^2
\end{align}
where, since $N(\varphi)=\lambda |x|^{-\beta}|\varphi|^\alpha\varphi$, we need to assume $\delta<\alpha+1$ to ensure that the left hand side of \eqref{outline_1} makes sense.  These restrictions on $\theta$ and $\delta$ lead to the condition $$d/2+\beta/\alpha<2\theta<\delta<\alpha+1.$$ To prove \eqref{outline_1}, we basically follow the same argument as in \cite[Section 2]{MMU} which is based on the fractional Leibniz rule and a Leibniz rule-type estimates for H\"older continuous functions by \cite{Visan}. One main new feature in our case is the variable coefficient term $|x|^{-\beta}$. To deal with $|x|^{-\beta}$ and its derivatives, we use fractional Hardy's inequality and the weighted $L^2$-boundedness for the Riesz transform. At this step, we need the first condition in \eqref{long_range_INLS} and the assumption \eqref{theorem_1_1}. 

\subsection{Organization of the paper}
The rest of the paper is devoted to proving the theorems stated above and organized as follows. In Section \ref{section_2}, we collect some basic tools used in the proofs, which include the factorization formula and Strichartz estimates for $e^{it\Delta}$. We also state a key lemma (Lemma \ref{lemma_w}) on the nonlinear estimates in Section \ref{section_2}. Section \ref{section_3} is devoted to the proof of Theorem \ref{theorem_1}. The proof of Theorem \ref{theorem_2} is given by Section \ref{section_NLSI}. In Section \ref{section_w}, we give the proof of the key lemma stated in Section \ref{section_2}. The proof of Theorem \ref{theorem_C_1} is given by Section \ref{section_C}. Appendix \ref{appendix_A} is concerned with a derivation of the integral equation  associated with the equation \eqref{INLS}. Finally, for the sake of self-containedness, we give the proof of the global existence of the $L^2$-solution to the Cauchy problem for \eqref{INLS} in Appendix \ref{appendix_B}. 

\section*{Acknowledgments}
T. Inui is partially  supported by JSPS KAKENHI Grant Number JP18K13444. H. Miyazaki is partially supported by JSPS KAKENHI Grant Number 19K14580. H. Mizutani is partially supported by JSPS KAKENHI Grant Number JP17K14218 and JP17H02854. K. Uriya is partially supported by JSPS KAKENHI Grant Number 19K14578. 

\section{Preliminary materials}
\label{section_2}
In this section we collect several basic results used in the proof of Theorem \ref{theorem_1}. 

The free evolution group $e^{it\Delta}$ satisfies the following well-known factorization formula: 
\begin{align}
\label{MDFM}
e^{it\Delta}=\mathcal M(t)\mathcal D(t)\mathcal F\mathcal M(t),
\end{align}
where $\mathcal F$ denotes the Fourier transform on $\R^d$ and $\mathcal M=\mathcal M(t)$ and $\mathcal D=\mathcal D(t)$ are the modulation and dilation operators defined by
\begin{align}
\label{dilation}
\mathcal M(t)\varphi(x):=\exp\left(i\frac{|x|^2}{4t}\right)\varphi(x),\quad
\mathcal D(t)\varphi(x):=\frac{1}{(2it)^{d/2}}\varphi\left(\frac{x}{2t}\right).
\end{align}
Note that all of $e^{it\Delta},\mathcal F,\mathcal M$ and $\mathcal D$ are unitary maps on $L^2(\R^d)$. 
Given a function $\varphi$, we set
\begin{align}
\label{w}
w(t,x):=\varphi(x) \exp\left(-\frac{i}{2}\lambda |x|^{-\beta}|\varphi(x)|^\alpha\log t\right).
\end{align}
Using these notations, $u_{\mathrm{ap}}$ can be rewritten as $u_{\mathrm{ap}}(t)=[\mathcal M\mathcal D w](t)$. 

\begin{lemma}
\label{lemma_M}
For any $1\le p\le\infty$, 
$$
\norm{\mathcal D(t)f}_{L^p}=|2t|^{-d(1/2-1/p)}\norm{f}_{L^p}.
$$
Moreover, for any $0\le \delta\le2$ and $1\le p\le\infty$, 
\begin{align*}
\norm{(\mathcal M(t)-1)f}_{L^p}\lesssim |t|^{-\delta/2}\norm{|x|^{\delta}f}_{L^p}.
\end{align*}
\end{lemma}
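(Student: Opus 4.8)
The plan is to derive both assertions from elementary pointwise considerations, with essentially no functional-analytic input beyond a change of variables.

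\textbf{The dilation identity.} First I would record that $\abs{\mathcal D(t)f(x)}=\abs{2t}^{-d/2}\abs{f(x/(2t))}$, so that the case $p=\infty$ is immediate. For $p<\infty$ the substitution $y=x/(2t)$ gives
\begin{align*}
\norm{\mathcal D(t)f}_{L^p}^p=\abs{2t}^{-dp/2}\int_{\R^d}\Bigl|f\Bigl(\tfrac{x}{2t}\Bigr)\Bigr|^p\,dx=\abs{2t}^{-dp/2+d}\norm{f}_{L^p}^p,
\end{align*}
and taking $p$-th roots produces the exponent $-d(1/2-1/p)$. (Equivalently, one may observe that $\mathcal D(t)$ is unitary on $L^2$ and scales the $L^1$ and $L^\infty$ norms in the obvious way, then interpolate.)

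\textbf{The modulation estimate.} The key ingredient is the scalar bound
\begin{align*}
\abs{e^{i\theta}-1}\le 2^{1-s}\abs{\theta}^s,\qquad \theta\in\R,\quad 0\le s\le 1,
\end{align*}
which I would obtain by writing $\abs{e^{i\theta}-1}=\abs{e^{i\theta}-1}^{1-s}\,\abs{e^{i\theta}-1}^{s}$ and inserting the two elementary estimates $\abs{e^{i\theta}-1}\le 2$ and $\abs{e^{i\theta}-1}\le\abs{\theta}$. I would then apply this with $\theta=\abs{x}^2/(4t)$ and $s=\delta/2$, the choice $s=\delta/2$ being admissible precisely because $0\le\delta\le 2$. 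Since $\abs{\theta}^{\delta/2}=(4\abs{t})^{-\delta/2}\abs{x}^\delta$, this yields the pointwise estimate
\begin{align*}
\abs{(\mathcal M(t)-1)f(x)}=\abs{e^{i\abs{x}^2/(4t)}-1}\,\abs{f(x)}\lesssim \abs{t}^{-\delta/2}\,\abs{x}^\delta\,\abs{f(x)},
\end{align*}
and taking the $L^p$ norm of both sides settles the claim simultaneously for every $1\le p\le\infty$.

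I do not anticipate any genuine obstacle here; the only point requiring a little care is to interpolate with exponent $s=\delta/2$ rather than $s=\delta$, so that the oscillatory phase $\abs{x}^2/(4t)$ converts into exactly the weight $\abs{x}^\delta$ together with the decay $\abs{t}^{-\delta/2}$ asserted in the statement. This is also precisely the step where the restriction $\delta\le 2$ is needed, since for $s>1$ the bound $\abs{e^{i\theta}-1}\le 2^{1-s}\abs{\theta}^s$ fails for small $\theta$.
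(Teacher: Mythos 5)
Your proof is correct and follows essentially the same route as the paper: the dilation identity by a direct change of variables, and the modulation estimate from the pointwise bound obtained by interpolating $\abs{e^{i\theta}-1}\le 2$ against $\abs{e^{i\theta}-1}\le\abs{\theta}$ with exponent $s=\delta/2$ (the paper writes this via $\abs{\mathcal M(t)-1}\le 2\abs{\sin(\abs{x}^2/(8t))}\lesssim \abs{x}^\delta\abs{t}^{-\delta/2}$, which is the same interpolation in disguise). No issues.
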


\begin{proof}
The first estimate is verified by a direct calculation, while the second estimate is deduced from the inequality 
$
|\mathcal M(t)-1|\le 2|\sin(|x|^2/{(8t)})|\lesssim |x|^{\delta}|t|^{-\delta/2}.
$
\end{proof}

Since $|x|^{-\beta}$ belongs to $L^{d/\beta,\infty}$ but $|x|^{-\beta}\notin L^p$ for any $p$, it is often convenient to work with the Lorentz spaces $L^{p,q}$ instead of the standard Lebesgue spaces $L^p$. We will use the following basic properties for $L^{p,q}$ (see \cite[Chapter 1]{Grafakos_1}):

\begin{lemma}
\label{lemma_Holder}
The following properties hold: 
\begin{itemize}
\item Continuous embedding: For $1\le p<\infty$ and $1<r_1<p<r_2<\infty$, we have
$$
L^{p,1}\subset L^{p,r_1}\subset L^p=L^{p,p}\subset L^{p,r_2}\subset L^{p,\infty}.
$$
\item Homogeneity: $\norm{|f|^\alpha}_{L^{p,r}}=\norm{f}_{L^{p\alpha,r\alpha}}^\alpha$ for $1\le p<\infty$, $1\le r\le \infty$ and $\alpha>0$. 
\item H\"older's inequality: If $1\le p,p_1,p_2<\infty$, $1\le r,r_1,r_2\le \infty$, $1/p_1+1/p_2=1/p$ and $1/r_1+1/r_2=1/r$, then
\begin{align}
\label{Holder}
\norm{fg}_{L^{p,r}}\lesssim \norm{f}_{L^{p_1,r_1}}\norm{g}_{L^{p_2,r_2}},\quad
\norm{fg}_{L^{p,r}}\lesssim \norm{f}_{L^{\infty}}\norm{g}_{L^{p,r}}.
\end{align}

\end{itemize}
\end{lemma}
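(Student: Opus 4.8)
The plan is to argue directly from the definition of the Lorentz quasi-norm via the decreasing rearrangement $f^*$ of $|f|$: for $1\le p<\infty$ and $1\le r<\infty$,
\[
\norm{f}_{L^{p,r}}=\left(\int_0^\infty\bigl(t^{1/p}f^*(t)\bigr)^r\,\frac{dt}{t}\right)^{1/r},\qquad \norm{f}_{L^{p,\infty}}=\sup_{t>0}t^{1/p}f^*(t),
\]
and to reduce all three items to elementary manipulations of this formula, using only that $f$ and $f^*$ are equimeasurable and that rearrangement is ``submultiplicative'' in the precise sense below.

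The homogeneity property is immediate once one observes that $(|f|^\alpha)^*=(f^*)^\alpha$, since $s\mapsto s^\alpha$ is increasing and so $|f|^\alpha$ and $|f|$ have comparable level sets; substituting into the definition and relabeling the exponents through $(p\alpha,r\alpha)$ gives $\norm{|f|^\alpha}_{L^{p,r}}^r=\int_0^\infty(t^{1/(p\alpha)}f^*(t))^{r\alpha}\,\frac{dt}{t}=\norm{f}_{L^{p\alpha,r\alpha}}^{r\alpha}$, and the $r=\infty$ case is the same with the supremum. For the embedding chain it suffices to establish the monotonicity $\norm{f}_{L^{p,r}}\lesssim\norm{f}_{L^{p,q}}$ whenever $q\le r$, since together with the identity $L^{p,p}=L^p$ --- which follows from Fubini applied to the layer-cake representation of $\norm{f}_{L^p}^p$ --- this yields every stated inclusion. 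One first checks $\norm{f}_{L^{p,\infty}}\lesssim\norm{f}_{L^{p,q}}$ using that $f^*$ is non-increasing: $\tfrac{q}{p}(t^{1/p}f^*(t))^q=\int_0^t s^{q/p-1}f^*(t)^q\,ds\le\int_0^t s^{q/p-1}f^*(s)^q\,ds\le\norm{f}_{L^{p,q}}^q$; then for $q\le r<\infty$ one interpolates, $\norm{f}_{L^{p,r}}^r\le\norm{f}_{L^{p,\infty}}^{r-q}\norm{f}_{L^{p,q}}^q\lesssim\norm{f}_{L^{p,q}}^r$.

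The H\"older inequality is the only part needing a genuine idea, and I would follow O'Neil's argument. The crucial rearrangement bound is $(fg)^*(t)\le f^*(t/2)g^*(t/2)$, itself a consequence of $(fg)^*(t_1+t_2)\le f^*(t_1)g^*(t_2)$: off the union $\{|f|>f^*(t_1)\}\cup\{|g|>g^*(t_2)\}$, whose complement has measure at most $t_1+t_2$, one has $|fg|\le f^*(t_1)g^*(t_2)$. Inserting this bound into the definition of $\norm{\cdot}_{L^{p,r}}$, changing variables $t\mapsto 2t$, splitting $t^{1/p}=t^{1/p_1}t^{1/p_2}$ via $1/p=1/p_1+1/p_2$, and applying the ordinary H\"older inequality on $\bigl((0,\infty),\tfrac{dt}{t}\bigr)$ with the conjugate pair $(r_1/r,r_2/r)$ --- legitimate because $1/r=1/r_1+1/r_2$ forces $r_1/r,r_2/r\ge 1$ with $\tfrac{r}{r_1}+\tfrac{r}{r_2}=1$ --- gives $\norm{fg}_{L^{p,r}}\le 2^{1/p}\norm{f}_{L^{p_1,r_1}}\norm{g}_{L^{p_2,r_2}}$; the endpoint $r=r_1=r_2=\infty$ is handled the same way without the integral H\"older step. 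The second inequality is immediate from the pointwise bound $(fg)^*(t)\le\norm{f}_{L^\infty}g^*(t)$.

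The one remaining technical point --- a bookkeeping matter rather than a real obstacle --- is that for $p>1$ the functional above is only a quasi-norm; it becomes a genuine norm after replacing $f^*$ by its Hardy average $f^{**}(t)=\tfrac1t\int_0^t f^*(s)\,ds$, and the two are equivalent when $p>1$. Since every assertion in the lemma is stated up to constants (or is an exact identity), working throughout with the $f^*$-quasi-norm is harmless. For a fully self-contained treatment one may simply invoke \cite[Chapter~1]{Grafakos_1}.
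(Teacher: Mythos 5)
Your proposal is correct, and it does more than the paper, which offers no proof of this lemma at all: the text simply records these three facts with a pointer to \cite[Chapter 1]{Grafakos_1}. What you have written is essentially the standard textbook treatment (the rearrangement identity $(|f|^\alpha)^*=(f^*)^\alpha$ for homogeneity, the monotonicity $\norm{f}_{L^{p,r}}\lesssim\norm{f}_{L^{p,q}}$ for $q\le r$ via the weak-norm bound plus interpolation, and O'Neil's inequality $(fg)^*(t_1+t_2)\le f^*(t_1)g^*(t_2)$ followed by H\"older on $((0,\infty),dt/t)$ for the product estimate), and each step checks out, including your closing remark that the quasi-norm versus norm distinction is immaterial since every assertion is either an exact identity or stated up to constants. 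The only blemish is a slip of wording in the O'Neil step: you write that the \emph{complement} of $\{|f|>f^*(t_1)\}\cup\{|g|>g^*(t_2)\}$ has measure at most $t_1+t_2$, when it is the union itself that has measure at most $t_1+t_2$ (its complement is typically of infinite measure); the inference you actually draw, namely $d_{fg}(f^*(t_1)g^*(t_2))\le t_1+t_2$ and hence $(fg)^*(t_1+t_2)\le f^*(t_1)g^*(t_2)$, is the correct one, so this is a typo rather than a gap.
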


We also recall the Strichartz estimates. A pair $(p,q)$ is said to be admissible if 
$$
2\le p,q\le \infty,\quad \frac2p=d\left(\frac12-\frac1q\right),\quad (d,p,q)\neq (2,2,\infty).
$$

\begin{lemma}
\label{lemma_Strichartz}
Let $(p,q)$ and $(\tilde p,\tilde q)$ be admissible pairs. For any interval $I$ containing $t=0$, \begin{align}\label{lemma_Strichartz_1}\norm{e^{it\Delta}f}_{L^{p}(\R;L^{q}(\R^d))}&\lesssim \norm{f}_{L^2(\R^d)},\\\label{lemma_Strichartz_2}\bignorm{\int_0^t e^{i(t-s)\Delta}F(s)ds}_{L^{p}(I;L^{q}(\R^d))}&\lesssim \norm{F}_{L^{\tilde p'}(I;L^{\tilde q'}(\R^d))},\end{align}
where the implicit constants are independent of $I$. For any $t>0$, we also have
\begin{align}
\label{lemma_Strichartz_3}
\bignorm{\int_\tau^\infty e^{i(\tau-s)\Delta}F(s)ds}_{L^{p}((t,\infty);L^{q}(\R^d))}&\lesssim \norm{F}_{L^{\tilde p'}((t,\infty);L^{\tilde q'}(\R^d))},
\end{align}
where the implicit constant is independent of $t$. Moreover, if in addition $q<\infty$ (resp. $\tilde q<\infty$), then $L^{q}$ (resp. $L^{\tilde q'}$) can be replaced by the Lorentz space $L^{q,2}$ (resp. $L^{\tilde q',2}$), respectively. 
\end{lemma}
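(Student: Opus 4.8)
The plan is to derive everything from two structural properties of $U(t):=e^{it\Delta}$: it is a unitary group on $L^2(\R^d)$, and it obeys the dispersive bound $\norm{U(t)f}_{L^\infty}\lesssim |t|^{-d/2}\norm{f}_{L^1}$. Real-interpolating the operator $U(t)U(s)^*$ between its mapping properties $L^1\to L^\infty$ (with norm $\lesssim|t-s|^{-d/2}$) and $L^2\to L^2$ (with norm $1$), one gets, for every $2\le q\le\infty$,
\[
\norm{U(t)U(s)^*g}_{L^{q,2}}\lesssim |t-s|^{-d(\frac12-\frac1q)}\norm{g}_{L^{q',2}},
\]
the Lorentz second index $2$ on both sides being exactly what real interpolation of a fixed-index pair produces (and $L^{2,2}=L^2$). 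Writing $\gamma:=d(\frac12-\frac1q)$, admissibility reads $\gamma=2/p\in(0,1]$, so the hypotheses of the abstract Strichartz theorem of Keel--Tao hold with target Banach space $L^{q,2}_x$ when $q<\infty$; the value $q=\infty$ occurs only for $(d,p)=(2,2)$, precisely the excluded pair.

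Invoking that theorem yields simultaneously, over $I=\R$: the homogeneous estimate \eqref{lemma_Strichartz_1}, its dual $\norm{\int_\R U(-s)F(s)\,ds}_{L^2}\lesssim\norm{F}_{L^{\tilde p'}(\R;L^{\tilde q',2})}$, and the retarded estimate $\norm{\int_{\{s<\tau\}}U(\tau-s)F(s)\,ds}_{L^p(\R;L^{q,2})}\lesssim\norm{F}_{L^{\tilde p'}(\R;L^{\tilde q',2})}$; in each, $L^q$ (resp. $L^{\tilde q'}$) is upgraded to $L^{q,2}$ (resp. $L^{\tilde q',2}$) whenever $q<\infty$ (resp. $\tilde q<\infty$), since the $TT^*$ and dyadic-summation mechanics of the proof are insensitive to that substitution, and the endpoint $(p,q)=(2,\tfrac{2d}{d-2})$ for $d\ge3$ is included. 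Passing to a general interval $I\ni0$ is routine: \eqref{lemma_Strichartz_1} follows by restriction; for \eqref{lemma_Strichartz_2} one writes $\int_0^\tau U(\tau-s)F(s)\,ds=\int_{\inf I}^\tau U(\tau-s)F(s)\,ds-U(\tau)\int_{\inf I}^0 U(-s)F(s)\,ds$, bounds the first term by applying the retarded estimate to $\mathbf 1_I F$ and restricting, and the second by the homogeneous estimate applied to a fixed $L^2$ datum, so the constants are independent of $I$.

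For \eqref{lemma_Strichartz_3} I would use the identity, valid for $\tau>t$,
\[
\int_\tau^\infty U(\tau-s)F(s)\,ds=U(\tau)\!\int_\R U(-s)F(s)\,ds-\int_{\{s<\tau\}}U(\tau-s)F(s)\,ds.
\]
Taking $L^p_\tau(\R;L^{q,2}_x)$-norms, the first term is controlled by the homogeneous estimate (applied to the $L^2$-datum $\int_\R U(-s)F(s)\,ds$, itself bounded via the dual estimate) and the second by the retarded estimate, each $\lesssim\norm{F}_{L^{\tilde p'}(\R;L^{\tilde q',2})}$ with the universal Strichartz constant. For fixed $t>0$ one then replaces $F$ by $\mathbf 1_{(t,\infty)}F$ (legitimate since only values $s>\tau>t$ enter) and restricts the $\tau$-integration to $(t,\infty)$, obtaining $\norm{F}_{L^{\tilde p'}((t,\infty);L^{\tilde q',2})}$ with a constant independent of $t$; passing from $L^{q,2}$ back to $L^q$ (via $L^{q,2}\subset L^q$ for $q>2$ and $L^{2,2}=L^2$) recovers the plain statements.

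The main obstacle is the endpoint $p=2$, $q=\tfrac{2d}{d-2}$ in dimension $d\ge3$: there the Christ--Kiselev lemma — which would otherwise upgrade untruncated estimates to retarded ones for non-endpoint pairs — is unavailable, so one must run the Keel--Tao bilinear argument (dyadic decomposition $\sum_j\iint_{|s-\sigma|\sim 2^j}\langle U(s)^*F(s),U(\sigma)^*G(\sigma)\rangle\,ds\,d\sigma$ with restricted-weak-type off-diagonal bounds and an $\ell^2$-summation) and check that it tolerates the Lorentz spaces $L^{q,2}_x$ on the spatial side — which it does, precisely because those off-diagonal bounds come from real interpolation. Everything else — interpolating the dispersive estimate, reducing to an arbitrary interval, the algebraic identity, and the localization giving $t$-uniformity — is routine.
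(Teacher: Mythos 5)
Your argument is correct and is essentially the route the paper itself takes, since the paper simply cites the classical references together with Keel--Tao's Theorem 10.1 for the Lorentz-space refinement, which is exactly the machinery (interpolated dispersive bound, abstract $TT^*$/bilinear endpoint argument, Christ--Kiselev or the built-in retarded estimate, and the algebraic identity reducing the advanced integral \eqref{lemma_Strichartz_3} to the homogeneous, dual and retarded ones) that you reconstruct. One small inaccuracy: the case $q=\infty$ is not confined to the excluded pair $(d,p,q)=(2,2,\infty)$ --- for $d=1$ the pair $(4,\infty)$ is admissible and is in fact used in Section \ref{section_3} --- but since the lemma claims the Lorentz upgrade only for $q<\infty$ and the plain Lebesgue estimate for $(4,\infty)$ in $d=1$ is still covered by the non-endpoint abstract theorem, nothing in your proof breaks.
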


\begin{proof}
All of \eqref{lemma_Strichartz_1}--\eqref{lemma_Strichartz_3} are well-known (see \cite{Strichartz,GiVe,Yajima,KeTa}). 
An improvement to the analogous estimates involving with $L^{q,2}$ and $L^{\tilde q',2}$ instead of $L^{q}$ and $L^{\tilde q'}$ is due to \cite[Theorem 10.1]{KeTa}. 
\end{proof}

Next we record two basic estimates for the power-type nonlinearity $|z|^{\alpha}z$. 
\begin{lemma}\label{lemma_nonlinear}Let $\alpha>0$. Then \begin{align}\label{lemma_nonlinear_1}\big||z_1|^\alpha z_1-|z_2|^\alpha z_2\big|\lesssim |z_2|^\alpha|z_1-z_2|+|z_1-z_2|^{\alpha+1},\quad z_1,z_2\in \C.\end{align}Moreover, for any $0<\alpha<1$ and any non-negative integer $n$, we also have \begin{align}\label{lemma_nonlinear_2}\big||z_1|^{\alpha-n} z_1^n-|z_2|^{\alpha-n} z_2^n\big|\lesssim |z_1-z_2|^{\alpha},\quad z_1,z_2\in \C.\end{align}\end{lemma}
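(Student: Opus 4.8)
The final statement to prove is Lemma~\ref{lemma_nonlinear}, consisting of two elementary pointwise inequalities for the map $z\mapsto |z|^{\alpha}z$ and its variant $z\mapsto |z|^{\alpha-n}z^n$. Both are local estimates on $\C$, so the plan is purely to reduce to calculus on $\R$ or $\R^2$; no harmonic analysis is needed.

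\textbf{Proof of \eqref{lemma_nonlinear_1}.} The plan is to write $F(z)=|z|^{\alpha}z$ and note that $F$ is $C^1$ away from the origin with $|\nabla F(z)|\lesssim |z|^{\alpha}$, so the fundamental theorem of calculus along the segment $[z_2,z_1]$ gives $|F(z_1)-F(z_2)|\lesssim \sup_{z\in[z_2,z_1]}|z|^{\alpha}\,|z_1-z_2|$. Then I split into two cases. If $|z_1-z_2|\le |z_2|$ (equivalently $z_1$ is comparable to $z_2$), then every point on the segment satisfies $|z|\le |z_1|+|z_2|\lesssim |z_2|$, yielding the first term $|z_2|^{\alpha}|z_1-z_2|$. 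If instead $|z_1-z_2|\ge |z_2|$, then $|z_1|\le |z_2|+|z_1-z_2|\le 2|z_1-z_2|$ and similarly $|z_2|\le |z_1-z_2|$, so both $|F(z_1)|$ and $|F(z_2)|$ are bounded by a constant times $|z_1-z_2|^{\alpha+1}$, and the triangle inequality finishes it. A minor point is to handle the case where the segment passes through $0$; there one argues directly from $|F(z_j)|=|z_j|^{\alpha+1}$, which is subsumed in the second case.

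\textbf{Proof of \eqref{lemma_nonlinear_2}.} Here the key observation is that $G(z)=|z|^{\alpha-n}z^n$ is homogeneous of degree $\alpha$ and, since $0<\alpha<1$, it is \emph{only} $\alpha$-H\"older continuous, not Lipschitz, near the origin — so one cannot integrate a derivative globally. The plan is again to split on whether $|z_1-z_2|$ exceeds $\max(|z_1|,|z_2|)$ or not. In the regime $|z_1-z_2|\gtrsim \max(|z_1|,|z_2|)$, homogeneity gives $|G(z_j)|=|z_j|^{\alpha}\le |z_1-z_2|^{\alpha}$ and the triangle inequality closes the estimate. In the complementary regime, one may assume by symmetry $0<|z_2|\le |z_1|$ and $|z_1-z_2|\le \tfrac12|z_2|$, say, so the whole segment $[z_2,z_1]$ stays in the annulus $\{\tfrac12|z_2|\le |z|\le 2|z_2|\}$ where $G$ is smooth with $|\nabla G(z)|\lesssim |z|^{\alpha-1}$; the mean value estimate then gives $|G(z_1)-G(z_2)|\lesssim |z_2|^{\alpha-1}|z_1-z_2|\le |z_1-z_2|^{\alpha-1}|z_1-z_2|=|z_1-z_2|^{\alpha}$, using $|z_2|\ge |z_1-z_2|$ and $\alpha-1<0$. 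One intermediate range (where $|z_1-z_2|$ is comparable to but smaller than $|z_2|$) is covered by either argument after adjusting constants.

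\textbf{Main obstacle.} These are routine, and the only genuine care needed is in \eqref{lemma_nonlinear_2}: because $\alpha<1$ the function fails to be $C^1$ at the origin, so the naive "bound the gradient on the segment" argument is invalid precisely when the segment comes close to $0$. The fix — isolating the annular regime where one has a good derivative bound $|z|^{\alpha-1}$ and treating the near-origin regime purely by homogeneity and the triangle inequality — is the one conceptual point; everything else is bookkeeping of comparable quantities. I expect the write-up to be a few lines per inequality.
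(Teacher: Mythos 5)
Your proof is correct, but it follows a genuinely different route from the paper's. For \eqref{lemma_nonlinear_1} the paper splits algebraically, writing $|z_1|^\alpha z_1-|z_2|^\alpha z_2=|z_1|^\alpha(z_1-z_2)+(|z_1|^\alpha-|z_2|^\alpha)z_2$ and then treating $\alpha\ge1$ via an integral representation of $|z_1|^\alpha-|z_2|^\alpha$ and $0<\alpha<1$ via concavity of $t\mapsto t^\alpha$; for \eqref{lemma_nonlinear_2} it runs an induction on $n$, peeling off one factor $|z_1|^{-1}z_1$ at a time and estimating the cross term $|z_2|z_1-|z_1|z_2$ by hand. You instead use a single mechanism for both inequalities: the maps $F(z)=|z|^\alpha z$ and $G(z)=|z|^{\alpha-n}z^n$ are real-$C^1$ away from the origin with the homogeneous derivative bounds $|DF|\lesssim|z|^\alpha$ and $|DG|\lesssim|z|^{\alpha-1}$, and a dichotomy between the regime where $|z_1-z_2|$ dominates the moduli (handled by homogeneity and the triangle inequality) and the regime where the segment $[z_2,z_1]$ stays in a fixed annulus (handled by the mean value inequality). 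Your case analysis is complete: when $|z_2|\le|z_1|$ and $|z_1-z_2|>\tfrac12|z_2|$ one indeed gets $|z_1|\le 3|z_1-z_2|$, so the two regimes overlap as you claim, and the sign $\alpha-1<0$ is used correctly to convert $|z_2|^{\alpha-1}\le|z_1-z_2|^{\alpha-1}$. What your approach buys is uniformity — no induction on $n$, no separate treatment of $\alpha\ge1$ versus $\alpha<1$ in the first inequality (indeed $F$ is $C^1$ on all of $\C$ since $|DF|\lesssim|z|^\alpha\to0$, so even the segment through the origin is harmless there). What the paper's approach buys is that it is entirely elementary pointwise algebra, with no appeal to the mean value inequality for vector-valued maps; the explicit decompositions it produces (e.g.\ the bound \eqref{lemma_nonlinear_proof_3}) are also reused verbatim inside the induction step. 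Either proof is acceptable.
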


\begin{proof}Although the lemma is well known (see e.g. \cite[Lemma 2.4]{GiVe_1989CMP}), we give the proof for the reader's convenience. To show \eqref{lemma_nonlinear_1}, we may assume  $|z_1|>|z_2|>0$ and compute \begin{align*}|z_1|^\alpha z_1-|z_2|^\alpha z_2=|z_1|^\alpha(z_1-z_2)+(|z_1|^\alpha-|z_2|^\alpha)z_2,\end{align*} where the desired estimate for the first term $|z_1|^\alpha(z_1-z_2)$ is easy to obtain. 
For the part $(|z_1|^\alpha-|z_2|^\alpha)z_2$ we consider two cases $\alpha\ge1$ and $\alpha<1$ separately. For $\alpha\ge1$,  we compute \begin{align*}|z_1|^\alpha-|z_2|^\alpha&=\alpha (|z_1|-|z_2|)\int_0^1\Big(|z_2|+\theta(|z_1|-|z_2|)\Big)^{\alpha-1}d\theta. \end{align*}
Since the integrand is dominated by $C(|z_2|^{\alpha-1}+|z_1-z_2|^{\alpha-1})$, we have 
\begin{align*}\left|(|z_1|^\alpha -|z_2|^\alpha) z_2\right|
&\lesssim |z_1-z_2||z_2|^{\alpha}+|z_1-z_2|^\alpha |z_2|\Big(\chi_{\{|z_2|\le |z_1-z_2|\}}+\chi_{\{|z_2|\ge|z_1-z_2|\}}\Big)\\
&\lesssim |z_1-z_2|^{\alpha+1}+|z_1-z_2| |z_2|^{\alpha}\end{align*}
and \eqref{lemma_nonlinear_1} for $\alpha\ge1$ follows. If $0<\alpha<1$, then the concavity of the map $[0,\infty)\ni t\mapsto t^\alpha$ implies
\begin{align}
\label{lemma_nonlinear_proof_2}
\left||z_1|^\alpha-|z_2|^\alpha\right|\le  (|z_1|-|z_2|)^\alpha\le |z_1-z_2|^\alpha.
\end{align}
This proves \eqref{lemma_nonlinear_1} for $0<\alpha<1$. \eqref{lemma_nonlinear_proof_2} also implies  \eqref{lemma_nonlinear_2} in case of $n=0$. 

Next we shall prove \eqref{lemma_nonlinear_2} for $n\ge1$ by an induction argument. We  write
$$
|z_1|^{\alpha-1} z_1-|z_2|^{\alpha-1} z_2=\Big(|z_1|^{\alpha}-|z_2|^\alpha\Big)|z_1|^{-1}z_1+|z_2|^\alpha\Big(|z_1|^{-1}z_1-|z_2|^{-1}z_2\Big),
$$
where, by \eqref{lemma_nonlinear_proof_2}, the first term satisfies the desired estimate. For the second term, we compute
\begin{align*}
|z_2|^\alpha\Big(|z_1|^{-1}z_1-|z_2|^{-1}z_2\Big)
=|z_2|^{\alpha}|z_1|^{-1}|z_2|^{-1}\Big(|z_2|z_1-|z_1|z_2\Big).
\end{align*}
If $|z_2|\le |z_1-z_2|$, then the estimate \eqref{lemma_nonlinear_2} for $n=1$ follows. When $|z_2|\ge |z_1-z_2|$, we have
\begin{align}
\nonumber
|z_1|^{-1}|z_2|^{\alpha-1}\Big||z_2|z_1-|z_1|z_2\Big|
&=|z_1|^{-1}|z_2|^{\alpha-1}\Big||z_2|(z_1-z_2)+(|z_2|-|z_1|)z_2\Big|\\
\nonumber
&\le2|z_1|^{-1}|z_2|^{\alpha}|z_1-z_2|\\
\label{lemma_nonlinear_proof_3}
&\le 2|z_1-z_2|^\alpha
\end{align}
since $\alpha<1$ and $|z_1|>|z_2|$. This proves \eqref{lemma_nonlinear_2} for $n=1$. For $n\ge2$, if we write
\begin{align*}
&|z_1|^{\alpha-n} z_1^n-|z_2|^{\alpha-n} z_2^n\\
&=\Big(|z_1|^{\alpha-n+1} z_1^{n-1}-|z_2|^{\alpha-n+1} z_2^{n-1}\Big)|z_1|^{-1}z_1+|z_2|^{-n+1}z_2^{n-1}|z_1|^{-1}|z_2|^{\alpha-1}\Big(|z_2|z_1-|z_1|z_2\Big)
\end{align*}
then the first term in the right hand side is dominated by $C|z_1-z_2|^\alpha$ thanks to the hypothesis of induction, while the same argument as in the case $n=1$ shows that the second term of the right hand side is also dominated by $C|z_1-z_2|^\alpha$. This completes the proof of \eqref{lemma_nonlinear_2}. 
\end{proof}

Finally, we state a key lemma in  our argument. Let $E_{j,\delta}(\varphi)$ for $j=1,2$ be defined as follows:
\begin{itemize}
\item if $1/2<\delta<1$ then
\begin{align*}
E_1(\varphi)&:=\norm{\varphi}_{H^\delta},\\
E_2(\varphi)&:=\norm{|x|^{-\beta/\alpha}\varphi}_{L^\infty}^{\alpha}+\norm{|x|^{-\beta/\alpha}\varphi}_{L^\infty}^{\alpha-1}\norm{|x|^{-\beta/\alpha}\varphi}_{\dot H^\delta};
\end{align*}
\item if $\delta\ge1$ then
\begin{align*}
E_1(\varphi)&:=\norm{\varphi}_{H^{\delta}}+|\beta|\norm{|x|^{-1}\varphi}_{H^{\delta-1}},\\
E_2(\varphi)&:=\norm{|x|^{-\beta/\alpha}\varphi}_{L^\infty}^{\alpha}+\norm{|x|^{-\beta/\alpha}\varphi}_{L^\infty}^{\alpha-(\delta-1)/s}\Big(\norm{\varphi}_{H^{\delta}}+|\beta|\norm{|x|^{-1}\varphi}_{H^{\delta-1}}\Big)^{(\delta-1)/s},
\end{align*}
where $s=s(\delta,\alpha)$ is defined by the relation $2s=1+(\delta-1)/\alpha$. 
\end{itemize}

\begin{lemma}
\label{lemma_w}
Let $\alpha,\beta$ satisfy \eqref{long_range_INLS} and $\mu=(\lambda/2)\log t$. Then the following statements hold: 
\begin{itemize}
\item[(1)] Let $d=1$ and $1/2<\delta\le 1$. Then 
\begin{align*}
\norm{\varphi e^{i\mu |x|^{-\beta}|\varphi|^\alpha}}_{H^\delta}&\lesssim E_1(\varphi)\Big(1+|\mu|E_2(\varphi)\Big),\\
\norm{N(\varphi)e^{i\mu |x|^{-\beta}|\varphi|^\alpha}}_{H^\delta}&\lesssim E_1(\varphi)E_2(\varphi)\Big(1+|\mu|E_2(\varphi)\Big). 
\end{align*}
\item[(2)] Let $d=2,3$ and $1<\delta<\alpha+1$. Then
\begin{align*}
\norm{\varphi e^{i\mu |x|^{-\beta}|\varphi|^\alpha}}_{H^{\delta}}&\lesssim E_1(\varphi)\Big(1+|\mu|E_2(\varphi)\Big)^2,\\
\norm{N(\varphi)e^{i\mu |x|^{-\beta}|\varphi|^\alpha}}_{H^{\delta}}&\lesssim E_1(\varphi)E_2(\varphi)\Big(1+\<\mu\>E_2(\varphi)\Big)^2.
\end{align*}
\end{itemize}
Here the implicit constants are independent of $\mu$ and $\varphi$. 
\end{lemma}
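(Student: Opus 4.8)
The plan is to prove Lemma \ref{lemma_w} by reducing the weighted nonlinear estimates to a Leibniz-rule analysis for the composition $\varphi \mapsto \varphi \, e^{i\mu V}$ with $V := |x|^{-\beta}|\varphi|^\alpha$, following the scheme of \cite[Section 2]{MMU} but carefully tracking the singular weight $|x|^{-\beta}$. First I would record the elementary observation that, since $e^{i\mu V}$ has modulus one, the $L^2$-norms are immediate: $\norm{\varphi e^{i\mu V}}_{L^2} = \norm{\varphi}_{L^2}$ and $\norm{N(\varphi) e^{i\mu V}}_{L^2} = \norm{|x|^{-\beta}|\varphi|^\alpha \varphi}_{L^2} \le \norm{|x|^{-\beta/\alpha}\varphi}_{L^\infty}^\alpha \norm{\varphi}_{L^2}$, which already produces the first summand of $E_2(\varphi)$. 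The remaining work is to control the homogeneous part $\norm{\cdot}_{\dot H^\delta}$.

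For the case $d=1$, $1/2 < \delta \le 1$, I would use a fractional-derivative characterization (e.g.\ the Gagliardo seminorm or the equivalent $\dot H^\delta$ norm): for $0 < \delta < 1$ one estimates $|F(x)-F(y)|$ pointwise. Writing $F = \varphi e^{i\mu V}$, the fractional Leibniz rule gives two terms, one with a difference quotient of $\varphi$ (handled by $\norm{\varphi}_{\dot H^\delta} \le E_1(\varphi)$) and one of the form $\varphi \cdot (e^{i\mu V(x)} - e^{i\mu V(y)})$. For the latter, $|e^{i\mu V(x)}-e^{i\mu V(y)}| \le |\mu|^{\delta'}|V(x)-V(y)|^{\delta'}$ for a suitable interpolation exponent (or simply $\le |\mu| \cdot |V(x)-V(y)|$ when $\delta < 1$ forces only a fractional amount of regularity of $V$ to be spent), and $V = |x|^{-\beta}|\varphi|^\alpha$ needs a Hölder-continuity/fractional-Leibniz estimate. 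Here the key inputs are: (i) Lemma \ref{lemma_nonlinear}, specifically \eqref{lemma_nonlinear_2}, to bound the difference $||\varphi(x)|^\alpha - |\varphi(y)|^\alpha|$ by $|\varphi(x)-\varphi(y)|^\alpha$ since $0<\alpha<1$ in this regime; (ii) the fact that $|x|^{-\beta}$ is Hölder of order $\beta$ (more efficiently, fractional Hardy: $\norm{|x|^{-\beta} g}_{\dot H^{\delta-\beta}} \lesssim \norm{g}_{\dot H^\delta}$ type bounds, or weighted boundedness of Riesz transforms). Balancing the regularity budget $\delta$ across the product $|x|^{-\beta}$, $|\varphi|^\alpha$, and $\varphi$ uses the structural constraint $\delta < \alpha+1$ and $\beta < 1$ from \eqref{long_range_INLS}, and the need for $|x|^{-\beta/\alpha}\varphi \in \dot H^\delta$ from Assumption \ref{assumption_A}. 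Tracking homogeneities carefully should reproduce the stated form of $E_2(\varphi)$ with the interpolated power $\norm{|x|^{-\beta/\alpha}\varphi}_{L^\infty}^{\alpha-1}$.

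For the case $d=2,3$, $1 < \delta < \alpha+1$, the derivative now actually hits the factors, so I would differentiate: $\nabla F = (\nabla \varphi) e^{i\mu V} + i\mu \varphi (\nabla V) e^{i\mu V}$ with $\nabla V = -\beta |x|^{-\beta-1}\frac{x}{|x|}|\varphi|^\alpha + |x|^{-\beta}\nabla(|\varphi|^\alpha)$. The first term $\nabla V$-piece carries a $|x|^{-\beta-1}$ singularity, which is exactly why the $|\beta|\norm{|x|^{-1}\varphi}_{H^{\delta-1}}$ term appears in $E_1$ and why one needs $\beta < d/2$ (so $|x|^{-\beta}$ is locally $L^2$-controllable via Hardy) — this is the role of the first condition in \eqref{long_range_INLS} and of \eqref{theorem_1_1}. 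Then I would apply the fractional Leibniz rule (Kato–Ponce) and the Visan-type product estimate for Hölder-continuous nonlinearities from \cite{Visan} to distribute the remaining $\delta-1$ derivatives; since $\delta - 1 < \alpha$, the function $z \mapsto |z|^\alpha z$ is $C^1$ with $\alpha$-Hölder derivative, so $|\varphi|^\alpha$-type compositions are handled with $(\delta-1)/\alpha$ as the natural exponent split, which is where $s$ with $2s = 1 + (\delta-1)/\alpha$ and the interpolation power $\alpha - (\delta-1)/s$ in $E_2$ come from. The quadratic dependence $(1+|\mu| E_2)^2$ arises because the chain rule on $e^{i\mu V}$ can produce two factors of $\mu \nabla V$ when $\delta > 2$ would be relevant, or — more precisely here — from one factor of $\mu\nabla V$ in $\nabla F$ and a second when estimating the $\dot H^{\delta-1}$ norm of that product, each $\nabla V$ costing a power of $E_2$.

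The main obstacle I expect is the bookkeeping at the junction of the singular weight $|x|^{-\beta}$ and the fractional regularity: one must spend part of the $\delta$ (or $\delta-1$) derivatives on $|x|^{-\beta}$ without the estimate degenerating at the origin, which forces the use of fractional Hardy inequalities / weighted $L^2$-boundedness of the Riesz transform rather than naive Hölder continuity, and requires verifying that the exponents permitted by \eqref{long_range_INLS} (namely $\beta < \min\{d/2,1\}$ and $\delta < \alpha+1$) are precisely what is needed for every term to close. A secondary delicate point is that, since $\alpha \le 2/d \le 1$ here, one cannot differentiate $|\varphi|^\alpha$ naively; every estimate on $V$ and its derivatives must go through the Hölder-type bounds \eqref{lemma_nonlinear_2} and the commutator/product estimates of \cite{Visan}, and one must check that the resulting powers of $\norm{|x|^{-\beta/\alpha}\varphi}_{L^\infty}$ assemble exactly into the claimed $E_2(\varphi)$.
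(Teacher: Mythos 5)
Your plan for $d=2,3$ matches the paper's proof in all essentials: differentiate once via \eqref{lemma_w_proof_1}--\eqref{lemma_w_proof_3}, distribute the remaining $\delta-1$ derivatives with the Kato--Ponce rule (Lemma \ref{lemma_chain}) and Visan's estimate (Lemma \ref{lemma_fractional}) applied to the $\alpha$-H\"older map $z\mapsto|z|^\alpha$ (which is where $2s=1+(\delta-1)/\alpha$ and the power $\alpha-(\delta-1)/s$ in $E_2$ come from), and absorb the $|x|^{-\beta-1}$ singularity through the fractional Hardy inequality and the weighted $L^2$-boundedness of the Riesz transform; your explanation of the quadratic factor $(1+|\mu|E_2)^2$ is also the correct one. (Minor quibble: it is $F_1(z)=|z|^\alpha$ itself that is $\alpha$-H\"older, not ``$C^1$ with $\alpha$-H\"older derivative''.)

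The genuine gap is in your treatment of $d=1$. You assert that $0<\alpha<1$ ``in this regime'' and propose to control $\big||\varphi(x)|^\alpha-|\varphi(y)|^\alpha\big|$ by $|\varphi(x)-\varphi(y)|^\alpha$ via \eqref{lemma_nonlinear_2}. But under \eqref{long_range_INLS} with $d=1$ one has $\alpha>\max\{1/d,(\sqrt5-1)/2\}=1$ (see \eqref{remark_1_1}), so the H\"older bound is not the relevant one; more importantly, even if it were available it would not close: inserting $|g(x)-g(y)|^\alpha$ into the Gagliardo seminorm produces a quantity of the type $\big(\iint|g(x)-g(y)|^{2\alpha}|x-y|^{-1-2\delta}\,dx\,dy\big)^{1/2}$, i.e.\ a Besov/Sobolev--Slobodeckij norm of $g$ of order $\delta/\alpha$ and integrability $2\alpha$, which is \emph{not} the quantity $\norm{|x|^{-\beta/\alpha}\varphi}_{L^\infty}^{\alpha-1}\norm{|x|^{-\beta/\alpha}\varphi}_{\dot H^\delta}$ appearing in $E_2$ for $1/2<\delta<1$. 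The paper instead exploits $\alpha>1$ to use the Lipschitz-type bounds $\big||z_1|^\alpha-|z_2|^\alpha\big|\lesssim(|z_1|^{\alpha-1}+|z_2|^{\alpha-1})|z_1-z_2|$ and the analogous bound for $e^{i\mu|z|^\alpha}$, applied with $z_j=|x_j|^{-\beta/\alpha}\varphi(x_j)$, so that one factor goes into $\norm{|x|^{-\beta/\alpha}\varphi}_{L^\infty}^{\alpha-1}$ and the difference quotient lands exactly on $\norm{|x|^{-\beta/\alpha}\varphi}_{\dot H^\delta}$. You would need to replace your step (i) by this argument. You also leave the endpoint $\delta=1$ in $d=1$ unaddressed (the Gagliardo characterization degenerates there); the paper handles it separately by direct differentiation using \eqref{lemma_w_proof_1}--\eqref{lemma_w_proof_3}, which is where the term $|\beta|\norm{|x|^{-1}\varphi}_{H^{\delta-1}}$ in $E_1$ is actually used for $d=1$.
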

The proof of this lemma build on the argument by Masaki--Miyazaki--Uriya \cite[Section 2]{MMU}, which is based on the fractional Leibniz rule and a Leibniz rule-type estimate for H\"older continuous functions by Visan \cite[Appendix A]{Visan}. One new ingredient in our argument is to use the fractional Hardy inequality and the weighted $L^2$-boundedness of the Riesz transform to deal with the singular variable coefficient term $|x|^{-\beta}$ in the nonlinear term $N(\varphi)$. As the proof is rather involved, we postpone it to Section \ref{section_w}. 


\section{Proof of Theorem \ref{theorem_1}}
\label{section_3}
This section is devoted to the proof of Theorem \ref{theorem_1}. Following the standard argument, we shall decompose the problem into two parts;  we first construct the solution $u\in C([T,\infty);L^2)$ to \eqref{INLS} satisfying \eqref{theorem_1_1} for sufficiently large $T>0$ and then we will extend it backward in time by using the well-posedness of the Cauchy problem for \eqref{INLS} established in Appendix \ref{appendix_B}. 

For the former  problem, we define a nonlinear map $\Psi[v]$ by
\begin{align}
\label{Phi}
\Psi[v](t):=u_{\mathrm{ap}}(t)+\mathcal K_1[v](t)+\mathcal K_2(t)+\mathcal Rw(t)
\end{align}
where $u_{\mathrm{ap}}$ and $w$ are given by \eqref{theorem_1_3} and \eqref{w}, respectively,  and 
\begin{align}
\nonumber
\mathcal K_1[v](t)&:=i\int_t^\infty e^{i(t-s)\Delta}\Big(N(v(s))-N(u_{\mathrm{ap}}(s))\Big)ds,\\
\nonumber
\mathcal K_2(t)&:=-i\int_t^\infty e^{i(t-s)\Delta}\mathcal R(s)N(w(s))\frac{ds}{2s},\\
\label{R}
\mathcal R=\mathcal R(t)&:=\mathcal M(t)\mathcal D(t)\mathcal F\left(\mathcal M(t)-1\right)\mathcal F^{-1}.
\end{align}
Then the equation \eqref{INLS} subjected to the asymptotic condition \eqref{theorem_1_2} can be reformulated as
\begin{align}
\label{integral_equation}
u=\Psi[u]
\end{align}
(see Appendix \ref{appendix_A} below). We shall solve this integral equation by means of the contraction mapping theorem. To this end we introduce a complete metric space
$$
\mathscr{X}(\theta,\rho,T):=\{v\in C([T,\infty);L^2(\R^d))\ |\ \norm{v-u_{\mathrm{ap}}}_{\mathscr X}\le \rho\}
$$
equipped with the distance function $d(u,v)=\norm{u-v}_{\mathscr X}$, where
\begin{align*}
\norm{v}_{\mathscr X}=\sup_{t\ge T}t^\theta \norm{f(t)}_{\mathscr Y(t)},\quad
\norm{f(t)}_{\mathscr Y(t)}=\norm{f(t)}_{L^2(\R^d)}+\norm{f}_{L^p((t,\infty);L^q(\R^d))}.
\end{align*}
Here we take the admissible pair $(p,q)$ as
\begin{align}
\label{admissible}
(p,q)=\begin{cases}(4,\infty)&\text{if}\ d=1,\\(3,6)&\text{if}\ d=2,\\(2,6)&\text{if}\ d=3.\end{cases}
\end{align}

We start dealing with the nonlinear part $\mathcal K_1[v]$. 

\begin{proposition}
\label{proposition_3_1}
Suppose that $2\theta>d/2+{\beta}/{\alpha}$. Then, for any $t\ge T>0$ and $v,v_1,v_2\in X_\rho$, 
\begin{align}
\label{proposition_3_1_1}
\norm{\mathcal K_1[v]}_{\mathscr Y(t)}&\lesssim \rho t^{-\theta} \left(\rho^{\alpha} t^{-\alpha\theta+1/2}+\norm{|x|^{-\beta/\alpha}\varphi}_{L^\infty}^\alpha\right),\\
\label{proposition_3_1_2}
\norm{\mathcal K_1[v_1]-\mathcal K_1[v_2]}_{\mathscr Y(t)}&\lesssim \rho t^{-\theta} \left(\rho^{\alpha} t^{-\alpha\theta+1/2}+\norm{|x|^{-\beta/\alpha}\varphi}_{L^\infty}^\alpha\right).
\end{align}
\end{proposition}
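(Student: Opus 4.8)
The plan is to estimate $\mathcal K_1[v]$ by the inhomogeneous Strichartz estimates of Lemma \ref{lemma_Strichartz} and to bound the source term $N(v)-N(u_{\mathrm{ap}})$ in dual Strichartz--Lorentz norms. Splitting the two pieces of $\norm{\cdot}_{\mathscr Y(t)}$: applying \eqref{lemma_Strichartz_3} with the admissible pair $(\infty,2)$ on the left controls $\sup_{\tau\ge t}\norm{\mathcal K_1[v](\tau)}_{L^2}$, hence $\norm{\mathcal K_1[v](t)}_{L^2}$, and with the dimension-specific pair \eqref{admissible} on the left it controls the $L^p_tL^q_x$-part, both by $\norm{N(v)-N(u_{\mathrm{ap}})}_{L^{\tilde p'}((t,\infty);L^{\tilde q',2})}$ for a suitable admissible $(\tilde p,\tilde q)$ (using the Lorentz refinement at the end of Lemma \ref{lemma_Strichartz}). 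By the pointwise estimate \eqref{lemma_nonlinear_1} with $(z_1,z_2)=(v,u_{\mathrm{ap}})$ one has $\abs{N(v)-N(u_{\mathrm{ap}})}\lesssim |x|^{-\beta}|u_{\mathrm{ap}}|^{\alpha}|v-u_{\mathrm{ap}}|+|x|^{-\beta}|v-u_{\mathrm{ap}}|^{\alpha+1}$, and I would treat the two terms separately. For the first, the key input is the decay estimate $\norm{|x|^{-\beta}|u_{\mathrm{ap}}(s)|^{\alpha}}_{L^\infty}\lesssim s^{-1}\norm{|x|^{-\beta/\alpha}\varphi}_{L^\infty}^{\alpha}$, which follows from $u_{\mathrm{ap}}=\mathcal M\mathcal D w$, $|w(s,x)|=|\varphi(x)|$, the unitarity of $\mathcal M$ and the scaling of $\mathcal D$ in Lemma \ref{lemma_M}, a change of variables, the equality $\norm{|x|^{-\beta}|\varphi|^\alpha}_{L^\infty}=\norm{|x|^{-\beta/\alpha}\varphi}_{L^\infty}^\alpha$, and crucially the identity $\alpha d/2+\beta=1$. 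Choosing $(\tilde p,\tilde q)=(\infty,2)$ (so the dual norm is $L^1_sL^2_x$), Hölder together with $\norm{(v-u_{\mathrm{ap}})(s)}_{L^2}\le\rho s^{-\theta}$ gives $\norm{|x|^{-\beta}|u_{\mathrm{ap}}|^{\alpha}|v-u_{\mathrm{ap}}|}_{L^1((t,\infty);L^2)}\lesssim\norm{|x|^{-\beta/\alpha}\varphi}_{L^\infty}^{\alpha}\int_t^\infty\rho s^{-1-\theta}\,ds\lesssim\rho\,t^{-\theta}\norm{|x|^{-\beta/\alpha}\varphi}_{L^\infty}^{\alpha}$, which is the second summand in \eqref{proposition_3_1_1}.

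For the superlinear term $|x|^{-\beta}|v-u_{\mathrm{ap}}|^{\alpha+1}$ I would use the Lorentz machinery of Lemma \ref{lemma_Holder}. Since $|x|^{-\beta}\in L^{d/\beta,\infty}$, Hölder's inequality \eqref{Holder} and the homogeneity of Lorentz norms bound its $L^{\tilde q',2}_x$-norm by $\norm{v-u_{\mathrm{ap}}}_{L^{m(\alpha+1),2(\alpha+1)}_x}^{\alpha+1}$ with $1/\tilde q'=\beta/d+1/m$; then the Lorentz embedding $L^{p,2}\subset L^{p,2(\alpha+1)}$ and real interpolation between $L^2_x=L^{2,2}_x$ and $L^q_x$ control this by $\norm{v-u_{\mathrm{ap}}}_{L^2_x}^{(1-\eta)(\alpha+1)}\norm{v-u_{\mathrm{ap}}}_{L^q_x}^{\eta(\alpha+1)}$ for a suitable $\eta\in[0,1]$. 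Integrating in time via Hölder, using $\norm{(v-u_{\mathrm{ap}})(s)}_{L^2}\le\rho s^{-\theta}$ and $\norm{v-u_{\mathrm{ap}}}_{L^p((t,\infty);L^q)}\le\rho t^{-\theta}$, and choosing $\eta$ and the admissible pair $(\tilde p,\tilde q)$ consistently — the delicate bookkeeping, carried out separately for $d=1,2,3$ with the pairs \eqref{admissible} (for instance for $d=3$ the pair $(\tilde p,\tilde q)=(2,6)$ forces $m(\alpha+1)=2$, so no interpolation is needed) — one obtains $\norm{|x|^{-\beta}|v-u_{\mathrm{ap}}|^{\alpha+1}}_{L^{\tilde p'}((t,\infty);L^{\tilde q',2})}\lesssim\rho^{\alpha+1}t^{-(\alpha+1)\theta+1/2}$, the $+1/2$ coming from the time integration. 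Since $\alpha d/2+\beta=1$ yields $d/2+\beta/\alpha=1/\alpha$, the hypothesis $2\theta>d/2+\beta/\alpha$ is exactly $2\alpha\theta>1$, which makes the relevant time integrals converge and renders $-\alpha\theta+1/2<0$; hence this summand equals $\rho\,t^{-\theta}\cdot\rho^{\alpha}t^{-\alpha\theta+1/2}$, and combined with the previous step this gives \eqref{proposition_3_1_1}.

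Finally, \eqref{proposition_3_1_2} follows from the same scheme applied to $N(v_1)-N(v_2)$: using \eqref{lemma_nonlinear_1} together with $|v_2|^\alpha\lesssim|u_{\mathrm{ap}}|^\alpha+|v_2-u_{\mathrm{ap}}|^\alpha$ one gets $\abs{N(v_1)-N(v_2)}\lesssim|x|^{-\beta}(|u_{\mathrm{ap}}|^\alpha+|v_2-u_{\mathrm{ap}}|^\alpha)|v_1-v_2|+|x|^{-\beta}|v_1-v_2|^{\alpha+1}$, and one repeats the estimates above with $v_1-v_2$ in the ``difference'' slot and $\norm{(v_j-u_{\mathrm{ap}})(s)}_{\mathscr Y(s)}\le\rho s^{-\theta}$ in the remaining slots (so that the first factor $\rho$ on the right of \eqref{proposition_3_1_2} is in fact $\norm{v_1-v_2}_{\mathscr X}=d(v_1,v_2)$, which is what the contraction argument in the proof of Theorem \ref{theorem_1} requires). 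The main obstacle is precisely this exponent bookkeeping for the superlinear term — matching the Lorentz exponents produced by splitting off the singular weight $|x|^{-\beta}$ against an admissible dual Strichartz pair while forcing the time-decay exponent to equal $-\alpha\theta+1/2$ — and it is exactly here that the scaling identity $\alpha d/2+\beta=1$, the hypothesis $2\theta>d/2+\beta/\alpha$, and the dimension restriction $d\le3$ (via the pairs \eqref{admissible}) are used.
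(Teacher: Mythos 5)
Your proposal is correct and follows essentially the same route as the paper: the same splitting of $N(v)-N(u_{\mathrm{ap}})$ into a piece linear in $v-u_{\mathrm{ap}}$ (handled via the dual pair $(1,2)$ and the decay $\norm{|x|^{-\beta}|u_{\mathrm{ap}}(s)|^\alpha}_{L^\infty}\lesssim s^{-1}\norm{|x|^{-\beta/\alpha}\varphi}_{L^\infty}^\alpha$ coming from $\alpha d/2+\beta=1$) and a superlinear piece (handled by Lorentz--H\"older to absorb $|x|^{-\beta}\in L^{d/\beta,\infty}$, interpolation of $v-u_{\mathrm{ap}}$ between $L^2$ and the $L^q$ of \eqref{admissible}, and H\"older in time), with the same dimension-by-dimension exponent choices, including the endpoint pair for $d=3$. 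Two small remarks. First, since you apply Strichartz with \emph{different} dual pairs to the two pieces, the pointwise bound from \eqref{lemma_nonlinear_1} alone does not split $N(v)-N(u_{\mathrm{ap}})$ into two functions; the paper does this with the cut-offs $\chi_{\{|v-u_{\mathrm{ap}}|\le|u_{\mathrm{ap}}|\}}$ and $\chi_{\{|v-u_{\mathrm{ap}}|\ge|u_{\mathrm{ap}}|\}}$, and you should insert the same device. Second, for \eqref{proposition_3_1_2} the paper merely writes $N(v_1)-N(v_2)$ as the difference of $N(v_j)-N(u_{\mathrm{ap}})$ and reuses \eqref{proposition_3_1_1}, which yields the stated bound but not a Lipschitz bound in $d(v_1,v_2)$; your version, which keeps $v_1-v_2$ in the difference slot and produces $\norm{v_1-v_2}_{\mathscr X}$ as the leading factor, is the estimate actually needed to run the contraction in the proof of Theorem \ref{theorem_1}, so this deviation is a genuine (and welcome) strengthening rather than a gap.
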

 Note that $\alpha\theta>1/2$ since $1/\alpha=d/2+\beta/\alpha$ under the assumption \eqref{long_range_INLS}. 
\begin{proof}
Throughout the proof we set $I_t=(t,\infty)$ for short. Let
\begin{align*}
N_1(v,u_{\mathrm{ap}})&:=\chi_{\{|v-u_{\mathrm{ap}}|\le |u_{\mathrm{ap}}|\}}(x)\left(N(v)-N(u_{\mathrm{ap}})\right),\\
N_2(v,u_{\mathrm{ap}})&:=\chi_{\{|v-u_{\mathrm{ap}}|\ge |u_{\mathrm{ap}}|\}}(x)\left(N(v)-N(u_{\mathrm{ap}})\right).
\end{align*}
Then $N(v)-N(u_{\mathrm{ap}})=N_1(v,u_{\mathrm{ap}})+N_2(v,u_{\mathrm{ap}})$ and, by virtue of Lemma \ref{lemma_nonlinear}, $N_1$ and $N_2$ satisfy
\begin{align*}
|N_1(v,u_{\mathrm{ap}})|\lesssim |x|^{-\beta}|v-u_{\mathrm{ap}}||u_{\mathrm{ap}}|^\alpha,\quad
|N_2(v,u_{\mathrm{ap}})|\lesssim |x|^{-\beta}|v-u_{\mathrm{ap}}|^{\alpha+1}.
\end{align*}

For the part $N_1$, we use the Strichartz estimate \eqref{lemma_Strichartz_3} with $(\tilde p',\tilde q')=(1,2)$ to obtain
\begin{align*}
\bignorm{\int_\tau^\infty e^{i(\tau-s)\Delta}N_1(v,u_{\mathrm{ap}})(s)ds}_{\mathscr Y(t)}
&\lesssim \norm{|x|^{-\beta}|v-u_{\mathrm{ap}}||u_{\mathrm{ap}}|^\alpha}_{L^1(I_t;L^2)}\\
&\lesssim \bignorm{ \norm{v-u_{\mathrm{ap}}}_{L^2}\norm{|x|^{-\beta}|u_{\mathrm{ap}}|^\alpha}_{L^\infty}}_{L^1(I_t)},
\end{align*}
where $\norm{v(s)-u_{\mathrm{ap}}(s)}_{L^2}\le \rho s^{-\theta}$ by the hypothesis $v\in\mathscr{X}(\theta,\rho,T)$. Moreover, since
$$
|x|^{-\beta}|u_{\mathrm{ap}}(s,x)|^\alpha=\frac{1}{(2s)^{{\alpha d}/{2}+\beta}}\left|\frac{x}{2s}\right|^{-\beta} \left|\varphi \left(\frac{x}{2s}\right)\right|^\alpha=\frac{1}{2s}\left|\frac{x}{2s}\right|^{-\beta} \left|\varphi \left(\frac{x}{2s}\right)\right|^\alpha
$$
by the condition $\alpha d/2+\beta=1$, as long as $\theta>0$, $N_1$ satisfies
\begin{align}
\nonumber
\bignorm{\int_\tau^\infty e^{i(\tau-s)\Delta}N_1(v,u_{\mathrm{ap}})(s)ds}_{\mathscr Y(t)}
&\lesssim \rho\int_t^\infty s^{-\theta-1}\norm{|x|^{-\beta/\alpha}\varphi}_{L^\infty}^\alpha ds\\
\label{proposition_3_1_proof_1}
&\lesssim \rho t^{-\theta}\norm{|x|^{-\beta/\alpha}\varphi}_{L^\infty}^\alpha.
\end{align}

For the part $N_2$, we shall show the following estimate
\begin{align}
\label{proposition_3_1_proof_3}
\bignorm{\int_\tau^\infty e^{i(\tau-s)\Delta}N_2(v,u_{\mathrm{ap}})(s)ds}_{\mathscr Y(t)}\lesssim \rho t^{-\theta}\rho^{\alpha }t^{-\alpha \theta+1/2}
\end{align}
which, together with \eqref{proposition_3_1_proof_1}, implies \eqref{proposition_3_1_1}. We deal with the cases $d=1,2$ and $3$ separately. Note that, since $\beta+\alpha d/2=1$, the condition $2\theta>d/2+\beta/\alpha$ is equivalent to the inequality $2\alpha\theta>1$.

Let first $d=1$. Taking $\ep>0$ so small that $2(\alpha-2\ep)\theta >1$, we define $p_1,q_1,r_1$ by the relations:\begin{align*}
\frac{1}{p_1}=\frac14-\frac\ep2,\quad \frac{1}{q_1}=\ep,\quad \frac{1}{r_1}=\frac{\alpha }{2}-\ep. 
\end{align*}
Note that $(p_1,q_1)$ is admissible, $r_1<2$ for small $\ep$ since $\alpha>1$ and $1/r_1=1/q_1'-\beta$. Then the Strichartz estimate with $(p,q,\tilde p,\tilde q)=(4,\infty,p_1,q_1)$ (see Lemma \ref{lemma_Strichartz} above) implies
\begin{align*}
\bignorm{\int_\tau^\infty e^{i(\tau-s)\Delta}N_2(v,u_{\mathrm{ap}})(s)ds}_{\mathscr Y(t)}
&\lesssim\norm{|x|^{-\beta}|v-u_{\mathrm{ap}}|^{\alpha+1}}_{L^{p_1'}(I_t;L^{q_1',2})}\\
&\lesssim \norm{|x|^{-\beta}}_{L^{1/\beta,\infty}}\norm{|v-u_{\mathrm{ap}}|^{\alpha+1}}_{L^{p_1'}(I_t;L^{r_1,2})}\\
&\lesssim \norm{|v-u_{\mathrm{ap}}|^{\alpha+1}}_{L^{p_1'}(I_t;L^{r_1,2})}.
\end{align*}
Since $L^{r_1}\subset L^{r_1,2}$ (see Lemma \ref{lemma_Holder} above), we have
\begin{align*}
\norm{|v-u_{\mathrm{ap}}|^{\alpha+1}}_{L^{r_1,2}}
&\le \norm{|v-u_{\mathrm{ap}}|^{\alpha-2\ep}}_{L^{r_1}}\norm{|v-u_{\mathrm{ap}}|^{1+2\ep}}_{L^\infty}\\
&=\norm{v-u_{\mathrm{ap}}}_{L^2}^{\alpha-2\ep}\norm{v-u_{\mathrm{ap}}}_{L^\infty}^{1+2\ep}.
\end{align*}
This, together with the hypothesis $v\in \mathscr{X}(\theta,\rho,T)$, implies
\begin{align}
\nonumber
\norm{|v-u_{\mathrm{ap}}|^{\alpha+1}}_{L^{p_1'}(I_t;L^{r_1,2})}
\nonumber
&\lesssim \bignorm{\norm{v-u_{\mathrm{ap}}}_{L^2}^{\alpha-2\ep}}_{L^2(I_t)}\bignorm{\norm{v-u_{\mathrm{ap}}}_{L^\infty}^{1+2\ep}}_{L^{\frac{4}{1+2\ep}}(I_t)}\\
\nonumber&\lesssim \norm{v-u_{\mathrm{ap}}}_{L^{2(\alpha-2\ep)}(I_t;L^2)}^{\alpha-2\ep}\norm{v-u_{\mathrm{ap}}}_{L^4(I_t;L^\infty)}^{1+2\ep}\\
\nonumber
&\lesssim \left(\int_t^\infty (\rho s^{-\theta})^{2(\alpha-2\ep)}ds\right)^{1/2}(\rho t^{-\theta})^{1+2\ep}\\
\nonumber
&\lesssim \rho^{\alpha+1}t^{-\alpha\theta-\theta+1/2}
\end{align}
and \eqref{proposition_3_1_proof_3} for $d=1$ follows, where we have used the relation $1/p_1'-(1+2\ep)/4=1/2$. 

We next let $d=2$. With the inequality $2(\alpha+1)\theta >1$ at hand, we take $0<\ep<1$ so small that 
$
{2(\alpha+1-3\ep)\theta }>1
$ and define the exponents $p_2,q_2,r_2$ and $r_3$ by the relations
\begin{align*}
\frac{1}{p_2}=\frac12-\ep,\quad \frac{1}{q_2}=\ep,\quad \frac{1}{r_2}=\frac{\alpha+1}{2}-\ep=\frac{1}{q_2'}-\frac \beta2,\quad \frac{1}{r_3}=\frac{1}{r_2}-\frac\ep2.
\end{align*}
Since $(p_2,q_2)$ is admissible and $q_2<\infty$, Lemma \ref{lemma_Strichartz} implies
\begin{align*}
\bignorm{\int_\tau^\infty e^{i(\tau-s)\Delta}N_2(v,u_{\mathrm{ap}})(s)ds}_{\mathscr Y(t)}
&\lesssim\norm{|x|^{-\beta}|v-u_{\mathrm{ap}}|^{\alpha+1}}_{L^{p_2'}(I_t;L^{q_2',2})}\\
&\lesssim \norm{|x|^{-\beta}}_{L^{2/\beta,\infty}}\norm{|v-u_{\mathrm{ap}}|^{\alpha+1}}_{L^{p_2'}(I_t;L^{r_2,2})}.
\end{align*}
Since $r_2<2$ and $\alpha+1-3\ep=2/r_3$, the same argument as that in case of $d=1$ implies
\begin{align}
\nonumber
\norm{|v-u_{\mathrm{ap}}|^{\alpha+1}}_{L^{p_2'}(I_t;L^{r_2,2})}
\nonumber
&\lesssim \bignorm{\norm{|v-u_{\mathrm{ap}}|^{\alpha+1-3\ep}}_{L^{r_3}}\norm{|v-u_{\mathrm{ap}}|^{3\ep}}_{L^{2/\ep}}}_{L^{p_2'}(I_t)}\\
\nonumber
&\le \bignorm{\norm{v-u_{\mathrm{ap}}}_{L^2}^{\alpha+1-3\ep}}_{L^2(I_t)}\bignorm{\norm{v-u_{\mathrm{ap}}}_{L^6}^{3\ep}}_{L^{1/\ep}(I_t)}\\
\nonumber&=\norm{v-u_{\mathrm{ap}}}_{L^{2(\alpha+1-3\ep)}(I_t;L^2)}^{\alpha+1-3\ep}\norm{v-u_{\mathrm{ap}}}_{L^3(I_t;L^6)}^{3\ep}\\
\nonumber
&\lesssim \left(\int_t^\infty(\rho s^{-\theta})^{2(\alpha+1-3\ep)}ds\right)^{1/2}(\rho t^{-\theta})^{3\ep}\\
\nonumber
&\lesssim \rho^{\alpha+1}t^{-(\alpha+1)\theta+1/2}
\end{align}
and \eqref{proposition_3_1_proof_3} for $d=2$ follows, where we have used the relation $1/p_2'=1/2+\ep$ in the second line. 

When $d=3$, we use the double endpoint Strichartz estimate to obtain
\begin{align*}
\bignorm{\int_\tau^\infty e^{i(\tau-s)\Delta}N_2(v,u_{\mathrm{ap}})(s)ds}_{\mathscr Y(t)}
&\lesssim \norm{|x|^{-\beta}|v-u_{\mathrm{ap}}|^{\alpha+1}}_{L^2(I_t;L^{\frac{6}{5},2})}\\
&\lesssim \norm{|x|^{-\beta}}_{L^{\frac 3\beta,\infty}}\norm{|v-u_{\mathrm{ap}}|^{\alpha+1}}_{L^2(I_t;L^{{\frac{2}{\alpha+1}},2})},
\end{align*}
where we have used the fact $(d+2-2\beta)/d=\alpha+1$ (with $d=3$). Moreover, since $L^{\frac{2}{\alpha+1}}\subset L^{\frac{2}{\alpha+1},2}$ and $2(\alpha+1)\theta>1$, we can estimate the last term as follows:
\begin{align}
\nonumber
\norm{|v-u_{\mathrm{ap}}|^{\alpha+1}}_{L^2((t,\infty);L^{\frac{2}{\alpha+1},2})}
&\lesssim\left(\int_t^\infty \norm{v(s)-u_{\mathrm{ap}}(s)}_{L^2}^{{2(\alpha+1)}}ds\right)^{1/2}\\
\nonumber
&\lesssim \left(\int_t^\infty (\rho s^{-\theta})^{2(\alpha+1)}ds\right)^{1/2}\\
\nonumber
&\lesssim \rho^{\alpha+1}t^{-(\alpha+1)\theta+1/2}.
\end{align}
This proves \eqref{proposition_3_1_proof_3} for $d=3$ and completes the proof of \eqref{proposition_3_1_1}. Finally, the same argument also implies \eqref{proposition_3_1_2} since
$
N(v_1)-N(v_2)
=N_1(v_1,u_{\mathrm{ap}})+N_2(v_2,u_{\mathrm{ap}})-N_1(v_2,u_{\mathrm{ap}})-N_2(v_2,u_{\mathrm{ap}})$. 
\end{proof}

Next we summarize the decay estimates for the inhomogeneous parts $\mathcal Rw$ and $\mathcal K_2$. 

\begin{proposition}
\label{proposition_3_2}
Let $\alpha,\beta$ and $\delta$ be as in Theorem \ref{theorem_1}. Then, for any $\delta'<\delta$,
\begin{align*}
\norm{\mathcal Rw(t)}_{\mathscr Y(t)}+\norm{\mathcal K_2(t)}_{\mathscr Y(t)}\lesssim t^{-\delta'/2}E_1(\varphi)\<E_2(\varphi)\>^3,\quad t\ge1. 
\end{align*}
\end{proposition}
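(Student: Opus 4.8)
The plan is to estimate the four quantities $\norm{\mathcal{R}w(t)}_{L^2}$, $\norm{\mathcal{R}w}_{L^p((t,\infty);L^q)}$, $\norm{\mathcal{K}_2(t)}_{L^2}$ and $\norm{\mathcal{K}_2}_{L^p((t,\infty);L^q)}$ separately, exploiting in each case the smoothing gain $\abs{\mathcal{M}(\tau)-1}\lesssim\abs{x}^\delta\tau^{-\delta/2}$ from Lemma~\ref{lemma_M} together with the nonlinear bounds of Lemma~\ref{lemma_w}. The starting point is the observation that, since $\abs{w(t,x)}=\abs{\varphi(x)}$, both $w(t)$ and $N(w(t))$ are of the form $\varphi\,e^{i\mu\abs{x}^{-\beta}\abs{\varphi}^\alpha}$ and $N(\varphi)\,e^{i\mu\abs{x}^{-\beta}\abs{\varphi}^\alpha}$ respectively, with $\abs{\mu}\lesssim\log t$; hence Lemma~\ref{lemma_w}, together with $\<\mu\>\lesssim\<\log t\>$ and the elementary inequality $(1+\<\mu\>E_2(\varphi))^2\lesssim\<\mu\>^2\<E_2(\varphi)\>^2$, gives
\begin{align*}
\norm{w(t)}_{H^\delta}+\norm{N(w(t))}_{H^\delta}\lesssim E_1(\varphi)\<E_2(\varphi)\>^3\<\log t\>^2,\qquad t\ge1.
\end{align*}
This is the only place at which the hypotheses on $\alpha,\beta$ and $\delta$ (via Lemma~\ref{lemma_w}) enter.

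For the $L^2$-norms I would argue as follows. Since $\mathcal{M},\mathcal{D},\mathcal{F}$ are unitary on $L^2$, Lemma~\ref{lemma_M} and Plancherel give $\norm{\mathcal{R}w(t)}_{L^2}=\norm{(\mathcal{M}(t)-1)\mathcal{F}^{-1}w(t)}_{L^2}\lesssim t^{-\delta/2}\norm{\,\abs{x}^\delta\mathcal{F}^{-1}w(t)}_{L^2}=t^{-\delta/2}\norm{w(t)}_{\dot H^\delta}\le t^{-\delta/2}\norm{w(t)}_{H^\delta}$, and together with the bound above and $t^{-\delta/2}\<\log t\>^2\lesssim t^{-\delta'/2}$ (valid for $t\ge1$ whenever $\delta'<\delta$) this yields $\norm{\mathcal{R}w(t)}_{L^2}\lesssim t^{-\delta'/2}E_1(\varphi)\<E_2(\varphi)\>^3$. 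For $\mathcal{K}_2$, the unitarity of $e^{i(t-s)\Delta}$ gives $\norm{\mathcal{K}_2(t)}_{L^2}\le\int_t^\infty\norm{\mathcal{R}(s)N(w(s))}_{L^2}\,\tfrac{ds}{2s}$; the integrand is bounded by $s^{-\delta/2}\norm{N(w(s))}_{H^\delta}$ exactly as above, and $\int_t^\infty s^{-\delta/2-1}\<\log s\>^2\,ds\lesssim t^{-\delta/2}\<\log t\>^2$ (convergent since $\delta>0$), so again $\norm{\mathcal{K}_2(t)}_{L^2}\lesssim t^{-\delta'/2}E_1(\varphi)\<E_2(\varphi)\>^3$.

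For the Strichartz parts, $\mathcal{K}_2$ is immediate: applying~\eqref{lemma_Strichartz_3} with $(\tilde p,\tilde q)=(\infty,2)$ bounds $\norm{\mathcal{K}_2}_{L^p((t,\infty);L^q)}$ by $\int_t^\infty\norm{\mathcal{R}(s)N(w(s))}_{L^2}\,\tfrac{ds}{2s}$, which was just estimated. The only genuinely delicate term is $\norm{\mathcal{R}w}_{L^p((t,\infty);L^q)}$, which I would handle pointwise in $\tau$. From $\mathcal{R}(\tau)w(\tau)=\mathcal{M}(\tau)\mathcal{D}(\tau)\mathcal{F}\big[(\mathcal{M}(\tau)-1)\mathcal{F}^{-1}w(\tau)\big]$, Lemma~\ref{lemma_M} produces the dispersive factor $(2\tau)^{-d(1/2-1/q)}=(2\tau)^{-2/p}$, the Hausdorff--Young inequality ($2\le q\le\infty$) gives $\norm{\mathcal{F}h}_{L^q}\lesssim\norm{h}_{L^{q'}}$, Lemma~\ref{lemma_M} again gives $\norm{(\mathcal{M}(\tau)-1)\mathcal{F}^{-1}w(\tau)}_{L^{q'}}\lesssim\tau^{-\delta''/2}\norm{\,\abs{x}^{\delta''}\mathcal{F}^{-1}w(\tau)}_{L^{q'}}$, and finally, writing $\abs{x}^{\delta''}\le\<x\>^{\delta''}=\<x\>^{-s}\<x\>^{\delta''+s}$ and applying H\"older with $\tfrac1{q'}=\tfrac1r+\tfrac12$, $\<x\>^{-s}\in L^r$ ($sr>d$) and $\norm{\<x\>^{\delta''+s}\mathcal{F}^{-1}w(\tau)}_{L^2}=\norm{w(\tau)}_{H^{\delta''+s}}\le\norm{w(\tau)}_{H^\delta}$, this is controlled by $\norm{w(\tau)}_{H^\delta}$ provided $\delta''+s\le\delta$. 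Since $\tfrac1r=\tfrac1{q'}-\tfrac12=\tfrac2{dp}$, so that $s>d/r=2/p$, the usable range is $0\le\delta''<\delta-2/p$, which is nonempty exactly because $\delta>d/2\ge2/p$. Altogether $\norm{\mathcal{R}(\tau)w(\tau)}_{L^q}\lesssim\tau^{-2/p-\delta''/2}\norm{w(\tau)}_{H^\delta}$; raising to the power $p$, inserting the $H^\delta$-bound of the first paragraph, and using $\int_t^\infty\tau^{-2-p\delta''/2}\<\log\tau\>^{2p}\,d\tau\lesssim t^{-1-p\delta''/2}\<\log t\>^{2p}$, one gets $\norm{\mathcal{R}w}_{L^p((t,\infty);L^q)}\lesssim E_1(\varphi)\<E_2(\varphi)\>^3\,t^{-1/p-\delta''/2}\<\log t\>^2$; choosing $\delta''$ close enough to $\delta-2/p$ makes $1/p+\delta''/2>\delta'/2$, absorbing the logarithm. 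Summing the four bounds gives the proposition. The main obstacle is precisely this last estimate, where one must carry the Fourier transform through an $L^q$-norm with $q>2$ while losing no more than $\delta$ derivatives of $w$; the margin $\delta>2/p$ (ensured by the hypothesis $\delta>2\theta>d/2+\beta/\alpha$, and an equality $2/p=d/2$ when $d=1$) is consumed entirely in this step.
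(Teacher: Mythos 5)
Your proof is correct and follows essentially the same route as the paper: the $H^\delta$ bounds on $w$ and $N(w)$ from Lemma \ref{lemma_w} with $\<\log t\>^2$ loss, the gain $|\mathcal M(\tau)-1|\lesssim |x|^{\gamma}\tau^{-\gamma/2}$, unitarity for the $L^2$ pieces and for $\mathcal K_2$, and for $\norm{\mathcal Rw}_{L^p L^q}$ the pointwise-in-$\tau$ chain of dilation factor, Hausdorff--Young, and H\"older against $\<x\>^{-s}\in L^r$ with $s$ just above $2/p=d/r$ and $\delta''+s\le\delta$ — which is exactly the paper's choice $\gamma=\delta-2/p$, $s=2/p+\ep$ in slightly different bookkeeping. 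The only cosmetic difference is that the paper runs the $L^2$ and $L^pL^q$ cases through one parametrized estimate with $\ep=(\delta-\delta')/2$ fixed at the outset, while you treat them separately and choose $\delta''$ at the end; the conclusion and the consumption of the margin $\delta>2/p$ are identical.
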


\begin{proof}
We first use Lemma \ref{lemma_w} with $\mu=\lambda(\log t)/2$ to find
\begin{align}
\label{proposition_3_2_proof_1}
\norm{w(t)}_{H^\delta}+\norm{N(w(t))}_{H^\delta}\lesssim \<\log t\>^2E_1(\varphi)\<E_2(\varphi)\>^3.
\end{align}
Assume without loss of generality that $\ep:=(\delta-\delta')/2>0$ is sufficiently small such that $\delta'>d/2+\beta/\alpha$. Recall that $\mathcal R=\mathcal M\mathcal D \mathcal F(\mathcal M-1)\mathcal F^{-1}$. For any admissible pair $(p,q)$ and $\ep\le \gamma\le 2$, Lemma \ref{lemma_M} and the Hausdorff-Young inequality then imply 
\begin{align*}
\norm{\mathcal Rw(t)}_{L^q}
&\lesssim t^{-2/p}\norm{(\mathcal M-1)\mathcal F^{-1}w(t)}_{L^{q'}}\\
&\lesssim t^{-2/p-\gamma/2+\ep/2}\norm{|x|^{\gamma-\ep}\<x\>^{2/p+\ep}\F^{-1}w(t)}_{L^{2}}\\
&\lesssim t^{-2/p-\gamma/2+\ep/2}\norm{w(t)}_{H^{\gamma+2/p}},
\end{align*}
where we have used the relation $2/p=d(1/2-1/q)$. Letting $(\gamma,p,q)=(\delta,\infty,2)$, \eqref{proposition_3_2_proof_1} implies
\begin{align*}
\norm{\mathcal Rw(t)}_{L^2}
\lesssim t^{-\delta/2+\ep/2}\norm{w(t)}_{H^{\delta}}
\lesssim t^{-\delta'/2}E_1(\varphi)\<E_2(\varphi)\>^3
\end{align*}
since $t^{-\ep/2}\<\log t\>^2\lesssim1$. Choosing $p$ and $q$ as in \eqref{admissible} and $\gamma=\delta-2/p>0$, we similarly obtain
\begin{align*}
\norm{\mathcal Rw}_{L^p((t,\infty);L^q)}
&\lesssim \left(\int_t^\infty s^{-1-p\delta /2+\ep p/2}\norm{w(s)}_{H^{\delta}}^pds\right)^{1/p}\\
&\lesssim t^{-\delta/2+\ep/2}\norm{w(t)}_{H^{\delta}}\\
&\lesssim t^{-\delta'/2}E_1(\varphi)\<E_2(\varphi)\>^3.
\end{align*}
These two bounds for $\mathcal Rw(t)$ imply the desired estimate for the part $\norm{\mathcal Rw(t)}_{\mathscr Y(t)}$. 

Similarly, it follows from the Strichartz estimate and  \eqref{proposition_3_2_proof_1} that
\begin{align*}
\norm{\mathcal K_2(t)}_{\mathscr Y(t)}
&\lesssim \int_t^\infty\norm{\mathcal RN(w(s))}_{L^2}\frac{ds}{2s}\\
&\lesssim \int_t^\infty s^{-1-\delta/2+\ep/2}\norm{N(w(s))}_{H^\delta}ds\\
&\lesssim t^{-\delta'/2}E_1(\varphi)\<E_2(\varphi)\>^3.
\end{align*}
This completes the proof.
\end{proof}

We are now in a position to prove Theorem \ref{theorem_1}.

\begin{proof}[Proof of Theorem \ref{theorem_1}]
Let $\delta,\theta$ and $\varphi$ be as in Theorem \ref{theorem_1} in which case $E_1(\varphi)$ and $E_2(\varphi)$ are finite. Let $\delta'$ be such that $2\theta<\delta'<\delta$. Since $\alpha\theta>1/2$, Propositions \ref{proposition_3_1} and \ref{proposition_3_2} show that, for any $\rho>0$, there exist $\ep,T>0$ such that if $\norm{|x|^{-\beta/\alpha}\varphi}_{L^\infty}\le \ep$, then the map $v\mapsto  \Psi[v]$ is a contraction map in $\mathscr X(\theta,\rho,T)$ and hence \eqref{integral_equation} admits a unique solution $u\in\mathscr X(\theta,\rho,T)$. 

In order to extend $u$ backward in time, by virtue of the unique global existence for the Cauchy problem \eqref{INLS} with the initial condition $u_0=u(T)$  (see Proposition \ref{proposition_B_1} and Remark \ref{remark_appendix_B_2} below), it is sufficient to show that $u\in L^{p}_{\loc}([T,\infty);L^q)$ for any admissible pair $(p,q)$ and $u$ satisfies
\begin{align}
\label{theorem_1_proof_1}
u(t+T)=e^{it\Delta}u(T)-i\int_0^te^{i(t-s)\Delta}N(u(s+T))ds,\quad t\ge0.
\end{align}
We first show the former assertion. By Lemma \ref{lemma_Strichartz}, \eqref{proposition_3_1_proof_1} and \eqref{proposition_3_1_proof_3} still hold with $\mathscr Y(t)$ replaced by $L^{p}_{\loc}([T,\infty);L^q)$ and hence $\mathcal K_1(u)\in L^{p}_{\loc}([T,\infty);L^q)$. It is also easily seen from the proof of Proposition \ref{proposition_3_2} that $\mathcal R w,\mathcal K_2\in L^{p}_{\loc}([T,\infty);L^q)$. Furthermore, Lemma \ref{lemma_w} and Sobolev's embedding imply $u_{\ap} \in L^\infty_{\loc}([T,\infty);H^\delta)\subset L^{p}_{\loc}([T,\infty);L^q)$ since $\delta>d/2$. This proves $u\in L^{p}_{\loc}([T,\infty);L^q)$. Next, in order to verify \eqref{theorem_1_proof_1}, using \eqref{appendix_A_1}--\eqref{appendix_A_3} we rewrite \eqref{integral_equation} as
\begin{align}
\nonumber
u(t+T)
&= e^{i(t+T)\Delta}\F^{-1}w(t+T)\\
\label{theorem_1_proof_2}
&+i\int_t^\infty e^{i(t-s)\Delta}\left[N(u(s+T))-\frac{e^{i(s+T)\Delta}}{2(s+T)}\F^{-1}N(w(s+T))\right]ds. 
\end{align}
In particular, $u(T)$ is written in the form
\begin{align*}
u(T)=e^{iT\Delta}\F^{-1}w(T)+i\int_0^\infty e^{-is\Delta}\left[N(u(s+T))-\frac{e^{i(s+T)\Delta}}{2(s+T)}\F^{-1}N(w(s+T))\right]ds.
\end{align*}
Let $I$ be the difference of the right hand sides of \eqref{theorem_1_proof_1} and \eqref{theorem_1_proof_2}. Then we have for $t\ge0$,
\begin{align*}
I&=e^{it\Delta} u(T)-i\int_0^\infty e^{i(t-s)\Delta}\left[N(u(s+T))-\frac{e^{i(s+T)\Delta}}{2(s+T)}\F^{-1}N(w(s+T))\right]ds\\
&+i\int_0^t\frac{e^{i(t+T)\Delta}}{2(s+T)}\F^{-1}N(w(s+T))ds-e^{iT\Delta}\F^{-1}w(t+T)\\
&=e^{i(t+T)\Delta}\F^{-1}\left[w(T)-w(t+T)+i\int_T^{t+T}\frac{1}{2s}N(w(s))ds\right]=0
\end{align*}
since $w(t)$ solves $i\partial_t w(t)=(2t)^{-1}N(w(t))$. Hence \eqref{theorem_1_proof_2} implies \eqref{theorem_1_proof_1}. Now we know by Proposition \ref{proposition_B_1} and Remark \ref{remark_appendix_B_2} that $u\in C(\R;L^2)$. This completes the proof.
\end{proof}

\section{Proof of Theorem \ref{theorem_2}}
\label{section_NLSI}
This section is devoted to the proof of Theorem \ref{theorem_2}. We start with the precise definition of the operator $\L$, as well as several properties of its radial part. Let $n\ge3$ and set $D_*=C_0^\infty(\R^n\setminus0)$ for short. Thanks to the sharp Hardy inequality  of the form
\begin{align}
\label{Hardy}
\frac{(n-2)^2}{4}\int \frac{|f|^2}{|x|^2}dx\le \int |\nabla f|^2dx,\quad f\in D_*,
\end{align}
the sesquilinear form 
$$
Q_{\mathcal L}(f,g)=\int \Big(\nabla f\cdot\overline{\nabla g}-\frac{(n-2)^2}{4|x|^2}f\overline g\Big)dx,\quad f,g\in D_*,
$$
is closable and non-negative: $Q_{\mathcal L}(f,f)\ge0$. We denote by the same symbol $Q_{\mathcal L}$ its closed extension with domain $D(Q_\mathcal L)$, which is the completion of $D_*$ with respect to the norm $$\sqrt{Q_{\mathcal L}(f,f)+\norm{f}_{L^2}}.$$ Then we define $\L$ as the Friedrichs extension of $Q_{\mathcal L}$ with domain $D(\mathcal L)$ given by
$$
D(\mathcal L)=\{f\in D(Q_\mathcal L)\ |\ |Q_\mathcal L(f,g)|\le C_f\norm{g}_{L^2}\ \text{for all}\ g\in L^2\}
$$
(see \cite[Theorem X.23]{ReSi_II}). Note that $\mathcal L$ is a non-negative self-adjoint operator on $L^2$ such that $D_*\subset D(\mathcal L)\subset D(Q_\mathcal L)$ and that $\L$ is an extension of $-\Delta-\frac{(n-2)^2}{4|x|^2}$ defined on $D_*$. 

With the identification $L^2_r(\R^n)\cong L^2(\R_+,r^{n-1}dr)$, we denote by $f(r)$ the element in $L^2_r(\R^n)$ without any confusion. Define a multiplication operator $\Lambda$ by 
\begin{align}
\label{Lambda}
\Lambda f(r):=c_n r^{\frac{n-2}{2}} f(r),\quad c_n:={(2\pi)}^{-1/2}|\mathbb S^{n-1}|^{1/2},
\end{align}
for any radial function $f$, where $|\mathbb S^{n-1}|$ is the volume of the unit sphere $\mathbb S^{n-1}$. It is easy to see that $\Lambda$ is a unitary from $L^2_r(\R^n)$ to $L^2_r(\R^2)$ and its adjoint is given by $\Lambda^*=c_n^{-1}r^{-\frac{n-2}{2}}$. Note that $u_*(x)=|x|^{-\frac{n-2}{2}}$ is a virtual minimizer of \eqref{Hardy}, namely $u_*$ solves 
$$
-\Delta u_*(x)-\frac{(n-2)^2}{4|x|^2}u_*(x)=0,\quad x\neq0.
$$
Hence $\Lambda^*$ is often called the ground state representation (or the Brezis--V\'azquez transformation \cite{BrVa}). The following lemma shows that $\mathcal L$ is unitarily  equivalent to $-\Delta_{\R^2}$ on the space of radial functions through the map $\Lambda$, where $\Delta_{\R^2}$ denotes the Laplacian on $\R^2$. 

\begin{lemma}
\label{lemma_5_1}
For any radial $f\in D(\mathcal L)$, $\mathcal Lf=-\Lambda^*\Delta_{\R^2}\Lambda f$. In particular, we have 
$$
e^{-it\mathcal L}f=\Lambda^*e^{it\Delta_{\R^2}}\Lambda f,\quad f\in L^2_r(\R^n).
$$
\end{lemma}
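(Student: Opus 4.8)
The plan is to prove the operator identity first on the radial functions in the core $D_*=C_0^\infty(\R^n\setminus0)$ by a direct ground-state substitution, and then to upgrade it to all radial $f\in D(\mathcal L)$ and to the propagator by a form-theoretic argument, exploiting that $\mathcal L$ is precisely the self-adjoint operator associated with the closed form $Q_{\mathcal L}$ recalled above. On radial functions $\Delta f=\partial_r^2 f+\frac{n-1}{r}\partial_r f$; writing $g=\Lambda f=c_n r^{(n-2)/2}f$, so that $f=\Lambda^*g=c_n^{-1}r^{-(n-2)/2}g$, and differentiating and collecting terms (the coefficient of $r^{-(n-2)/2-2}g$ comes out to $-\frac{(n-2)^2}{4}$ and that of $r^{-(n-2)/2-1}\partial_r g$ to $1$), one obtains
\[
\Delta f=c_n^{-1}r^{-\frac{n-2}{2}}\Big(\partial_r^2 g+\frac{1}{r}\partial_r g-\frac{(n-2)^2}{4r^2}g\Big)=\Lambda^*\Delta_{\R^2}g-\frac{(n-2)^2}{4|x|^2}\Lambda^*g.
\]
Since $\Lambda^*g=f$, adding $\frac{(n-2)^2}{4|x|^2}f$ to both sides yields $\mathcal L f=-\Lambda^*\Delta_{\R^2}\Lambda f$ for radial $f\in D_*$; here we use that $\Lambda$ maps radial $C_0^\infty(\R^n\setminus0)$ bijectively onto radial $C_0^\infty(\R^2\setminus0)$ and is unitary from $L^2_r(\R^n)$ to $L^2_r(\R^2)$.

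Next I would pass to the quadratic forms. For radial $f,g\in D_*\subset D(\mathcal L)$, the identity just proved, unitarity of $\Lambda$, and integration by parts on $\R^2$ give
\[
Q_{\mathcal L}(f,g)=\langle \mathcal L f,g\rangle_{L^2(\R^n)}=\langle -\Delta_{\R^2}\Lambda f,\Lambda g\rangle_{L^2(\R^2)}=\int_{\R^2}\nabla(\Lambda f)\cdot\overline{\nabla(\Lambda g)}\,dx.
\]
Thus $\Lambda$ intertwines the closable forms $Q_{\mathcal L}$ on radial $C_0^\infty(\R^n\setminus0)$ and $\int|\nabla\cdot|^2$ on radial $C_0^\infty(\R^2\setminus0)$, hence also their closures. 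Since a point has zero $H^1$-capacity in dimension $\ge2$, $C_0^\infty(\R^2\setminus0)$ is dense in $H^1(\R^2)$, and together with the rotation invariance of the Dirichlet form this identifies the closure of the right-hand side with the restriction of $\int|\nabla\cdot|^2$ to the radial functions in $H^1(\R^2)$, i.e. the form of $-\Delta_{\R^2}$ on the radial sector. So $\Lambda$ carries the radial part of the closed form $Q_{\mathcal L}$ onto the radial part of the form of $-\Delta_{\R^2}$.

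By the uniqueness in the representation theorem for closed nonnegative forms, this form equivalence transfers to the associated operators. Concretely, for radial $f\in D(\mathcal L)$ the vector $\mathcal L f$ is radial (rotation invariance of $\mathcal L$), and for every radial $g\in D(Q_{\mathcal L})$ one has $\langle \Lambda\mathcal L f,\Lambda g\rangle_{L^2(\R^2)}=Q_{\mathcal L}(f,g)=\int_{\R^2}\nabla(\Lambda f)\cdot\overline{\nabla(\Lambda g)}\,dx$; since $\Lambda\mathcal L f$ is radial it suffices to test against radial elements of $H^1(\R^2)$, all of which are of the form $\Lambda g$ with radial $g\in D(Q_{\mathcal L})$, so $\Lambda f\in D(-\Delta_{\R^2})$ with $-\Delta_{\R^2}\Lambda f=\Lambda\mathcal L f$, that is, $\mathcal L f=-\Lambda^*\Delta_{\R^2}\Lambda f$. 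Finally, applying this unitary equivalence through the Borel functional calculus to $\phi(\xi)=e^{-it\xi}$ gives $e^{-it\mathcal L}=\Lambda^*e^{-it(-\Delta_{\R^2})}\Lambda=\Lambda^*e^{it\Delta_{\R^2}}\Lambda$ on $L^2_r(\R^n)$.

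The computational step is routine; the main obstacle is the form-closure step, namely verifying that the closures of $Q_{\mathcal L}$ and of the $\R^2$-Dirichlet form correspond under $\Lambda$. This hinges on the vanishing capacity of points in dimension $\ge2$ (for the relevant target dimension $2$ this needs a logarithmic cutoff) so that $C_0^\infty(\R^2\setminus0)$ is $H^1$-dense, and on using rotation invariance to reduce all form and operator identities to the radial sector; once the operators are identified there, the propagator identity follows at once from functional calculus.
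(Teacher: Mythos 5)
Your proof is correct and follows essentially the same route as the paper: the identical polar-coordinate computation of $\Lambda^*\left(\partial_r^2+\tfrac1r\partial_r\right)\Lambda$ on radial $D_*$, followed by an extension to all radial $f\in D(\mathcal L)$ via the uniqueness of the Friedrichs extension (phrased by you through the representation theorem for closed forms) and by the functional calculus/Stone's theorem for the propagator. Your form-theoretic elaboration --- in particular that $C_0^\infty(\R^2\setminus 0)$ is $H^1(\R^2)$-dense because a point has zero capacity in dimension two, so the closure of the transported form is the full radial Dirichlet form of $-\Delta_{\R^2}$ --- is precisely the content hiding behind the paper's one-line appeal to uniqueness of the Friedrichs extension, so the two arguments coincide in substance.
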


\begin{proof}
In the polar coordinate, a direct calculation shows that for any radial $f\in D_*$,
$$
\Lambda^*\left(\partial_r^2+\frac1r\partial_r\right)\Lambda f=\partial_r^2f+\frac{n-1}{r}\partial_rf+\frac{(n-2)^2}{4r^2}f=-\mathcal Lf.
$$
Since $D_*$ is a dense subset of $D(\mathcal L)$, the uniqueness of the Friedrichs extension implies the first assertion. The second assertion then follows from Stone's theorem (see \cite{ReSi_I}). 
\end{proof}

We next give a characterization of $\H_r^s$. Recall that the Sobolev space $\H^s$ adapted to $\mathcal L$ is the completion of $D_*$ with respect to the norm $\norm{\<\mathcal L\>^{s/2}f}_{L^2}$, where $\<\mathcal L\>^{s/2}$ is defined through the spectral decomposition theorem. More generally, for any $g\in L^2_{\mathrm{loc}}(\R)$, $g(\mathcal L)$ is defined by 
$$
g(\mathcal L)=\int g(\lambda) dE_{\mathcal L}(\lambda),\quad D(g(\mathcal L))=\left\{u\in L^2\ |\ \int|g(\lambda)|^2d\norm{E_{\mathcal L}(\lambda)u}_{L^2}^2<\infty\right\}
$$
where $dE_{\mathcal L}$ denotes the spectral measure associated with $\mathcal L$ (see \cite{ReSi_I}). 

\begin{lemma}
\label{lemma_5_2}
Let $g\in L^2_{\loc}(\R)$. Then, for any radial $f\in D_*$, we have
\begin{align}
g(\mathcal L)f=\Lambda^*g(-\Delta_{\R^2})\Lambda f.
\end{align}
In particular, for any $s\in \R$, $\H_r^s=\Lambda^*H^s_r(\R^2)$ or equivalently $\Lambda \H_r^s=H^s_r(\R^2)$. 
\end{lemma}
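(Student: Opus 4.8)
The plan is to upgrade the intertwining in Lemma \ref{lemma_5_1} from the propagator to the full Borel functional calculus — i.e.\ to show that $\mathcal L$ and $-\Delta_{\R^2}$ are unitarily equivalent on their radial sectors via $\Lambda$ — and then to specialize $g$. Since $\mathcal L$ is known only through its closed form (with a critically singular potential), everything is routed through bounded operators. First I would Laplace-transform the identity $e^{-it\mathcal L}f=\Lambda^*e^{it\Delta_{\R^2}}\Lambda f$: for $\Im z>0$ one has $(\mathcal L-z)^{-1}f=i\int_0^\infty e^{izt}e^{-it\mathcal L}f\,dt$ (valid because $\mathcal L\ge0$), and the analogous formula holds for $-\Delta_{\R^2}$, so, since the bounded operator $\Lambda^*$ commutes with the Bochner integral,
\[
(\mathcal L-z)^{-1}f=\Lambda^*(-\Delta_{\R^2}-z)^{-1}\Lambda f,\qquad f\in L^2_r(\R^n),\ \Im z>0,
\]
the right-hand side being radial because $\Lambda f$ is. The resolvents generate $C_0([0,\infty))$ as a $C^*$-algebra, so this extends to $g(\mathcal L)f=\Lambda^*g(-\Delta_{\R^2})\Lambda f$ for every $g\in C_0([0,\infty))$ and radial $f$, and then to arbitrary Borel $g$ by the standard monotone/dominated-convergence approximation in the spectral calculus (equivalently, $E_{\mathcal L}(B)f=\Lambda^*E_{-\Delta_{\R^2}}(B)\Lambda f$ for Borel $B$). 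Restricting to $f\in D_*$ is harmless: $\mathcal L$ maps $C_0^\infty(\R^n\setminus\{0\})$ into itself ($|x|^{-2}$ being smooth off the origin), so $f\in\bigcap_k D(\mathcal L^k)$ and hence lies in the domain of the relevant $g(\mathcal L)$, and likewise $\Lambda f\in C_0^\infty(\R^2\setminus\{0\})\subset\bigcap_k D((-\Delta_{\R^2})^k)$. This yields the first displayed identity.

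For the ``in particular'' statement I would apply this with $g(\lambda)=\<\lambda\>^{s/2}$, so $g(\mathcal L)=\<\mathcal L\>^{s/2}$ and $g(-\Delta_{\R^2})$ is, up to equivalence of norms, the operator defining the $H^s(\R^2)$-norm. Since $\Lambda,\Lambda^*$ are unitaries between $L^2_r(\R^n)$ and $L^2_r(\R^2)$, this gives $\norm{f}_{\H^s}\simeq\norm{\Lambda f}_{H^s(\R^2)}$ for radial $f\in D_*$, so $\Lambda$ restricts to a linear isomorphism from $(D_*)_r$, with the $\H^s$-norm, onto $C_0^\infty(\R^2\setminus\{0\})_r$, with the $H^s(\R^2)$-norm. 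It then remains to pass to completions. On the $\R^n$ side, $(D_*)_r$ is dense in $\H^s_r$: the rotation average $P_ru(x)=\int_{SO(n)}u(Rx)\,dR$ is a contraction on $\H^s$ (it commutes with $\mathcal L$), maps $D_*$ into $(D_*)_r$, and fixes radial functions, so $\H^s_r=P_r\H^s=\overline{P_rD_*}=\overline{(D_*)_r}$. On the $\R^2$ side one invokes the classical density of $C_0^\infty(\R^\ell\setminus\{0\})$ in $H^s(\R^\ell)$ (in the range of $s$ at hand). Hence $\Lambda$ extends to a Banach-space isomorphism $\H^s_r\to H^s_r(\R^2)$, i.e.\ $\Lambda\H^s_r=H^s_r(\R^2)$, equivalently $\H^s_r=\Lambda^*H^s_r(\R^2)$.

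I expect the first step to be the main point: transferring the spectral calculus through $\Lambda$ when $\mathcal L$ is available only via its form, which is precisely why the argument goes through the unitary group and its resolvent rather than through any formal expression for $\mathcal L$. The density statements in the second step are routine, but should still be recorded — in particular the boundedness of the radial projection on the two Sobolev scales.
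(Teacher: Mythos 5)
Your proof is correct and follows essentially the same route as the paper: both derive the resolvent identity $(\mathcal L-z)^{-1}f=\Lambda^*(-\Delta_{\R^2}-z)^{-1}\Lambda f$ from Lemma \ref{lemma_5_1} and then transfer the full functional calculus, the only difference being that the paper passes through Stone's formula to identify the spectral measures directly, while you use Stone--Weierstrass on the resolvent algebra followed by approximation. Your treatment of the ``in particular'' part (density of $(D_*)_r$ via the radial averaging projection) is more detailed than the paper's, which simply asserts the conclusion.
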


\begin{proof}
Let $f\in L^2_r(\R^n)$. By Lemma \ref{lemma_5_1} and a direct computation, we have $(\mathcal L-z)^{-1}f=\Lambda^*(-\Delta_{\R^2}-z)^{-1}\Lambda f$ for any $z\in \C\setminus\R$ and $f\in L^2_r(\R^n)$. Combined with following Stone's formula
$$
E_{\mathcal L}(\lambda)=\frac{1}{2\pi i}\lim_{\ep \to0}\Big((\mathcal L-\lambda-i\ep)^{-1}-(\mathcal L-\lambda+i\ep)^{-1}\Big),
$$
this equality implies the unitary equivalence of the spectral measures: $$E_{\mathcal L}(\lambda)f=\Lambda^*E_{-\Delta_{\R^2}}(\lambda)\Lambda f.$$ Therefore, we know by the spectral decomposition theorem that, for any $v\in L^2_r(\R^n)$, 
$$
\<g(\mathcal L)f,v\>=\int_\R g(t)d\<E_\mathcal L(\lambda)f,v\>=\int_\R g(t)d\<E_{-\Delta_{\R^2}}(\lambda)\Lambda f,\Lambda v\>=\<\Lambda^*g(-\Delta_{\R^2})\Lambda f,v\>,
$$
which implies the desired assertion. 
\end{proof}

We prepare one more lemma which follows by direct computations.

\begin{lemma}
\label{lemma_5_3}
Let $r=|x|$, $\tilde\alpha=\alpha$, $\tilde \beta=\beta-\alpha(n-2)/2$ and  $\tilde \lambda=c_n^\alpha\lambda$. Then $N(\Lambda z)=\Lambda \tilde N(z)$ for any $z\in \C$. Moreover $u_{\ap}(t)=\Lambda \tilde u_{\ap}(t)$ if $\varphi=i^{-\frac{n-2}{2}}\Lambda\tilde \varphi$.
\end{lemma}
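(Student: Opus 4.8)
The lemma follows by direct computation, so the plan is simply to substitute the definitions and keep careful track of the powers of $r=|x|$, the constant $c_n$, and the branch factor $i^{-(n-2)/2}$, using throughout the relations $\tilde\alpha=\alpha$, $\tilde\beta=\beta-\alpha(n-2)/2$, $\tilde\lambda=c_n^\alpha\lambda$, together with the fact that the reduced equation \eqref{INLS} is taken in dimension $d=2$.

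For the first identity, I would write $\Lambda z=c_n r^{(n-2)/2}z$. Since $N(u)=\lambda|x|^{-\beta}|u|^\alpha u$ with $d=2$, this gives
$$
N(\Lambda z)=\lambda\,c_n^{\alpha+1}\,r^{-\beta+(\alpha+1)\frac{n-2}{2}}\,|z|^\alpha z,
$$
while on the other hand
$$
\Lambda\tilde N(z)=c_n r^{\frac{n-2}{2}}\cdot\tilde\lambda\,r^{-\tilde\beta}\,|z|^{\tilde\alpha}z=c_n\tilde\lambda\,r^{\frac{n-2}{2}-\tilde\beta}\,|z|^\alpha z.
$$
Matching the constants yields $\tilde\lambda=c_n^\alpha\lambda$ and matching the exponents of $r$ yields $\tilde\beta=\beta-\alpha\frac{n-2}{2}$, both of which hold by hypothesis, so $N(\Lambda z)=\Lambda\tilde N(z)$.

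For the second identity, I would insert $\varphi=i^{-(n-2)/2}\Lambda\tilde\varphi$, that is, $\varphi(y)=i^{-(n-2)/2}c_n|y|^{(n-2)/2}\tilde\varphi(y)$, into the formula \eqref{theorem_1_3} for $u_{\ap}$ with $d=2$ and put $y=x/2t$. Using $(2it)^{n/2}=(2it)(2t)^{(n-2)/2}i^{(n-2)/2}$ one sees that the amplitude $\tfrac{1}{2it}\varphi(x/2t)$ equals $c_n|x|^{(n-2)/2}\cdot\tfrac{1}{(2it)^{n/2}}\tilde\varphi(x/2t)$, which is precisely the amplitude of $\Lambda\tilde u_{\ap}(t)$ read off from \eqref{theorem_2_2}. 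For the phase, the free part $i|x|^2/(4t)$ is untouched, and since the unimodular factor drops out of $|\varphi(x/2t)|^\alpha=c_n^\alpha|x/2t|^{\alpha(n-2)/2}|\tilde\varphi(x/2t)|^\alpha$, the logarithmic correction obeys
$$
\frac\lambda2\Big|\frac{x}{2t}\Big|^{-\beta}\Big|\varphi\Big(\frac{x}{2t}\Big)\Big|^\alpha=\frac{\tilde\lambda}{2}\Big|\frac{x}{2t}\Big|^{-\tilde\beta}\Big|\tilde\varphi\Big(\frac{x}{2t}\Big)\Big|^{\tilde\alpha}
$$
by the same relations together with $\tilde\alpha=\alpha$. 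Comparing \eqref{theorem_1_3} and \eqref{theorem_2_2} then gives $u_{\ap}(t)=\Lambda\tilde u_{\ap}(t)$.

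I do not expect any genuine obstacle here: the computation is routine, and the only delicate point is the branch of the fractional power $(2it)^{n/2}$, which is exactly why the factor $i^{-(n-2)/2}$ is built into the relation $\varphi=i^{-(n-2)/2}\Lambda\tilde\varphi$ rather than just $\varphi=\Lambda\tilde\varphi$.
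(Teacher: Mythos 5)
Your proposal is correct and is exactly the ``direct computation'' the paper alludes to but does not write out: the matching of the constant $c_n^{\alpha}$ and of the exponent of $r$ for the first identity, and the branch bookkeeping $(2it)^{n/2}=(2it)(2t)^{(n-2)/2}i^{(n-2)/2}$ for the second, are precisely the intended steps. Nothing is missing.
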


We are now ready to show Theorem \ref{theorem_2}.

\begin{proof}[Proof of Theorem \ref{theorem_2}]
Suppose that the conditions in Theorem \ref{theorem_2} are fulfilled. We set $\alpha=\tilde \alpha$, $\beta=\tilde \beta+\alpha(n-2)/2$, $\lambda=c_n^{-\tilde\alpha}\tilde\lambda$ and $\varphi=i^{-\frac{n-2}{2}}\Lambda\tilde \varphi$. Then we know from Lemmas \ref{lemma_5_1} and \ref{lemma_5_3} that \eqref{INLS} is equivalent to \eqref{NLSI} under the radial symmetry in the sense that $\tilde u\in C(\R;L^2_r(\R^n))$ is a radial solution to \eqref{NLSI} if and only if $u=\Lambda \tilde u\in C(\R;L^2_r(\R^2))$ is a radial solution to \eqref{INLS} with $d=2$. We shall check that the assumptions in Theorem \ref{theorem_1} with $d=2$ are satisfied. A direct calculation yields that $\alpha,\beta,\delta$ and $\theta$ satisfy the desired conditions. By Lemma \ref{lemma_5_2}, $\varphi\in H^\delta_r$ and $\varphi$ satisfies Assumption \ref{assumption_A} since $r^{-\beta/\alpha}\varphi=i^{-\frac{n-2}{2}}c_n r^{-\tilde \beta/\tilde\alpha}\tilde \varphi\in L^\infty$ and 
$$
\norm{\<D\>^{\delta-1}r^{-1}\varphi}_{L^2(\R^2)}=\norm{\Lambda \<\mathcal L\>^{(\delta-1)/2}\Lambda^*r^{-1}\Lambda \tilde \varphi}_{L^2(\R^2)}=\norm{\<\mathcal L\>^{(\delta-1)/2}r^{-1}\tilde \varphi}_{L^2(\R^n)}<\infty.
$$
We also have 
$\norm{r^{-\beta/\alpha}\varphi}_{L^\infty}=c_n\norm{r^{-\tilde\beta/\tilde\alpha}\tilde \varphi}_{L^\infty}\le c_n\ep$.  
Hence, Theorem \ref{theorem_1} and the unitary equivalence between \eqref{INLS} and \eqref{NLSI} show that, for sufficiently small $\ep>0$, there exists a unique radial solution $\tilde u\in C(\R;L^2_r(\R^n))$ to \eqref{NLSI} satisfying \eqref{theorem_2_1}. This completes the proof. 
\end{proof}

\section{Proof of Lemma \ref{lemma_w}}
\label{section_w}
This section is devoted to the proof of Lemma \ref{lemma_w}. In what follows, we write  for simplicity$$E_j=E_{j}(\varphi).$$We first record two known results on the fractional derivatives which will be used frequently in this section (see \cite[Theorem A.8]{KePoVe_CPAM} for Lemma \ref{lemma_chain} and \cite[Proposition A.1]{Visan} for Lemma \ref{lemma_fractional}). 

\begin{lemma}
\label{lemma_chain}
Let $s>0$, $1<r<\infty$, $1<p_1,p_2,q_1,q_2\le \infty$ and $1/r=1/p_1+q_1=1/p_2+1/q_2$. Then we have the following fractional Leibniz rule: 
$$
\norm{|D|^s(fg)}_{L^r}\lesssim \norm{|D|^sf}_{L^{p_1}}\norm{g}_{L^{q_1}}+\norm{f}_{L^{p_2}}\norm{|D|^sg}_{L^{q_2}}.
$$
\end{lemma}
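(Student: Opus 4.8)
\emph{Proof strategy.} The plan is to prove the inequality by the standard Littlewood--Paley / paraproduct machinery. Fix a homogeneous dyadic partition of unity $\mathrm{Id}=\sum_{j\in\Z}\Delta_j$, with $\Delta_j$ a Fourier multiplier localising to the annulus $\{|\xi|\sim2^j\}$, and set $S_j=\sum_{k\le j-3}\Delta_k$. Using Bony's decomposition I would write $fg=\Pi_1(f,g)+\Pi_2(f,g)+\Pi_3(f,g)$ with
\[
\Pi_1(f,g)=\sum_j\Delta_jf\,S_jg,\qquad \Pi_2(f,g)=\sum_jS_jf\,\Delta_jg,\qquad \Pi_3(f,g)=\sum_{|j-k|\le2}\Delta_jf\,\Delta_kg.
\]
Two standard facts are used repeatedly: the square-function characterisation $\norm{|D|^sh}_{L^p}\sim\big\|\big(\sum_j2^{2js}|\Delta_jh|^2\big)^{1/2}\big\|_{L^p}$ for $1<p<\infty$ (a consequence of the Mikhlin--H\"ormander multiplier theorem and the Fefferman--Stein maximal inequality), and the pointwise domination $\sup_j|S_jh|+\sup_j|\Delta_jh|\lesssim Mh$, where $M$ is the Hardy--Littlewood maximal operator, bounded on $L^p$ for every $1<p\le\infty$.

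For the high--low paraproduct $\Pi_1$, each summand $\Delta_jf\,S_jg$ has Fourier support in $\{|\xi|\sim2^j\}$, so applying $|D|^s$ costs only a harmless factor $\sim2^{js}$ that is absorbed into $\Delta_jf$. By almost orthogonality, the square-function characterisation, extracting $\sup_j|S_jg|$, and H\"older's inequality,
\[
\norm{|D|^s\Pi_1(f,g)}_{L^r}\lesssim\Big\|\Big(\sum_j2^{2js}|\Delta_jf|^2\Big)^{1/2}\sup_j|S_jg|\Big\|_{L^r}\lesssim\norm{|D|^sf}_{L^{p_1}}\norm{g}_{L^{q_1}},
\]
where the H\"older step uses $\tfrac1r=\tfrac1{p_1}+\tfrac1{q_1}$ and $\norm{Mg}_{L^{q_1}}\lesssim\norm{g}_{L^{q_1}}$. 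The low--high term $\Pi_2$ is treated identically with $f$ and $g$ interchanged and the pairing $\tfrac1r=\tfrac1{p_2}+\tfrac1{q_2}$, yielding the bound $\norm{f}_{L^{p_2}}\norm{|D|^sg}_{L^{q_2}}$.

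The resonant term $\Pi_3$ is the one place where $s>0$ is genuinely needed. Now $\Delta_jf\,\Delta_kg$ with $|j-k|\le2$ has Fourier support only in the ball $\{|\xi|\lesssim2^j\}$, so $|D|^s$ acts across all lower scales: writing $\Delta_l|D|^s\Pi_3=\sum_{j\gtrsim l}\Delta_l|D|^s(\Delta_jf\,\widetilde\Delta_jg)$ and distributing $|D|^s$ onto, say, the $f$-factor produces a weight $2^{(l-j)s}2^{js}$, and since $s>0$ the geometric sum $\sum_{l\le j}2^{(l-j)s}$ is finite. A discrete Young/Schur estimate then controls the $l$-square function of $\Pi_3$ by the $j$-square function of $2^{js}|\Delta_jf|\,M(\Delta_jg)$, and one closes with H\"older and the maximal bound exactly as for $\Pi_1$ (using whichever H\"older pairing is convenient). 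Summing the three contributions proves the lemma.

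The step I expect to be the main obstacle is the endpoint cases in which some $p_i$ or $q_i$ equals $\infty$: then the square-function characterisation of $\norm{|D|^s\cdot}_{L^\infty}$ is unavailable, and one must either replace the offending $L^\infty$-norm by a $\mathrm{BMO}$-type norm, for which the underlying Carleson-measure / square-function estimates survive, or recover the endpoint by interpolation from estimates with all exponents finite; the remaining technicalities --- uniform boundedness of the rescaled multipliers $2^{js}\Delta_j$ versus $\Delta_j|D|^s$, and the bookkeeping of the dyadic sums --- are routine. Alternatively, the whole lemma follows in one stroke from the Coifman--Meyer bilinear multiplier theorem, after writing $|D|^s(fg)=T_{m_1}(|D|^sf,g)+T_{m_2}(f,|D|^sg)$ with $m_1,m_2$ smooth away from the origin, supported in $\{|\xi|\gtrsim|\eta|\}$ and $\{|\eta|\gtrsim|\xi|\}$ respectively, and satisfying $|\partial_\xi^a\partial_\eta^bm_i(\xi,\eta)|\lesssim(|\xi|+|\eta|)^{-|a|-|b|}$; I would mention this as the quicker route, subject to the same endpoint caveat.
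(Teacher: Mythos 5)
The paper does not prove this lemma at all: it is quoted as a known result, with a pointer to Kenig--Ponce--Vega \cite[Theorem A.8]{KePoVe_CPAM}, so there is no in-paper argument to compare against. Your Littlewood--Paley/paraproduct sketch is the standard proof of that cited result and the outline is essentially correct: the high--low and low--high paraproducts are handled by frequency localization of the product, the square-function characterization of $\norm{|D|^s\cdot}_{L^p}$, and the maximal-function domination of $S_jg$; the resonant piece is where $s>0$ enters through the convergent geometric sum $\sum_{l\le j}2^{(l-j)s}$; and the Coifman--Meyer reformulation you mention is the quickest complete route. One caution on the endpoint discussion: the genuinely hard case is when the factor \emph{carrying} the derivative is measured in $L^\infty$ (i.e.\ $p_1=\infty$ or $q_2=\infty$); this is the Bourgain--Li/Grafakos--Oh endpoint and is not recovered by naive interpolation from finite exponents, so your proposed fallback there is not a proof. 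The harmless case is $q_1=\infty$ or $p_2=\infty$ (the underived factor in $L^\infty$), which your maximal-function step already covers --- and this is in fact the only configuration in which the paper ever invokes the lemma (e.g.\ terms of the form $\norm{\psi}_{L^\infty}\norm{|D|^{\delta-1}\nabla\varphi}_{L^2}$ in Section 5), so for the purposes of this paper your argument, suitably completed, suffices.
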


\begin{lemma}
\label{lemma_fractional}
Let $F$ be a H\"older continuous function of order  $0<\gamma<1$. Suppose that $0<\sigma<\gamma$, $1<p<\infty$ and $\sigma/\gamma<s<1$. Then
\begin{align}
\norm{|D|^\sigma F(f)}_{L^p}\lesssim \norm{f}_{L^{\infty}}^{\gamma-\sigma/s}\norm{|D|^sf}_{L^{p\sigma /s}}^{\sigma/s}.
\end{align}
\end{lemma}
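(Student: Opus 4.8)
The plan is to pass to a homogeneous Littlewood--Paley decomposition $1=\sum_{j\in\Z}\Delta_j$ and exploit the Triebel--Lizorkin square-function characterization
\[
\norm{|D|^\sigma g}_{L^p}\approx\bignorm{\Bigl(\sum_{j}2^{2j\sigma}|\Delta_j g|^2\Bigr)^{1/2}}_{L^p},\qquad 1<p<\infty,
\]
together with the elementary pointwise control of the increments of $f$ by the Hardy--Littlewood maximal function $M$ of $|D|^sf$. Applying this with $g=F(f)$, and using that the convolution kernel $\psi_j$ of $\Delta_j$ has vanishing mean and satisfies $|\psi_j(y)|\lesssim 2^{jd}(1+2^j|y|)^{-N}$, one writes
\[
\Delta_j(F(f))(x)=\int\psi_j(y)\bigl(F(f(x-y))-F(f(x))\bigr)\,dy,
\]
so Hölder continuity of $F$ gives $|\Delta_j(F(f))(x)|\lesssim[F]_{C^\gamma}\int|\psi_j(y)|\,|f(x-y)-f(x)|^\gamma\,dy$.

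Next I would estimate the increment $f(x-y)-f(x)$ in two complementary ways: trivially by $2\norm{f}_{L^\infty}$, and — since $0<s<1$ — by the standard bound $|f(x-y)-f(x)|\lesssim|y|^s\bigl(M(|D|^sf)(x)+M(|D|^sf)(x-y)\bigr)$, which itself follows by summing $|\Delta_k f(x-y)-\Delta_k f(x)|\lesssim\min(1,2^k|y|)\,2^{-ks}\bigl(M(|D|^sf)(x)+M(|D|^sf)(x-y)\bigr)$ over the scales $k$. Feeding the first bound into the $y$-integral yields $|\Delta_j(F(f))(x)|\lesssim[F]_{C^\gamma}\norm{f}_{L^\infty}^\gamma$, while feeding the second (and using $\int|\psi_j(y)|\,|y|^{s\gamma}\,dy\sim 2^{-js\gamma}$) yields $|\Delta_j(F(f))(x)|\lesssim[F]_{C^\gamma}\,2^{-js\gamma}\,A(x)^\gamma$, where $A$ stands for $M(|D|^sf)$ up to an additional iterated maximal average produced by the $M(|D|^sf)(x-y)$ contribution. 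Hence
\[
2^{j\sigma}|\Delta_j(F(f))(x)|\lesssim[F]_{C^\gamma}\min\!\bigl(2^{j\sigma}\norm{f}_{L^\infty}^\gamma,\ 2^{j(\sigma-s\gamma)}A(x)^\gamma\bigr),
\]
and summing the squares over $j$ — a geometric series on each side of the crossover scale, convergent below because $\sigma>0$ and convergent above because $\sigma-s\gamma<0$, i.e. $s>\sigma/\gamma$ — gives the pointwise estimate
\[
\Bigl(\sum_j 2^{2j\sigma}|\Delta_j(F(f))(x)|^2\Bigr)^{1/2}\lesssim[F]_{C^\gamma}\,\norm{f}_{L^\infty}^{\gamma-\sigma/s}\,A(x)^{\sigma/s}.
\]
Taking $L^p$ norms and applying the Hardy--Littlewood maximal inequality at the exponent $p\sigma/s$ (noting $\norm{A^{\sigma/s}}_{L^p}=\norm{A}_{L^{p\sigma/s}}^{\sigma/s}$ and $\sigma/s<\gamma\le1$) leads to $\norm{|D|^\sigma F(f)}_{L^p}\lesssim[F]_{C^\gamma}\norm{f}_{L^\infty}^{\gamma-\sigma/s}\norm{M(|D|^sf)}_{L^{p\sigma/s}}^{\sigma/s}\lesssim[F]_{C^\gamma}\norm{f}_{L^\infty}^{\gamma-\sigma/s}\norm{|D|^sf}_{L^{p\sigma/s}}^{\sigma/s}$, which is the claim.

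The steps that require care are the pointwise increment bound for $f$ (routine once one commits to the Littlewood--Paley splitting and keeps track of the shifts inside the maximal functions) and the exponent bookkeeping through the square function and the maximal inequalities — this is precisely where the hypotheses $0<\sigma<\gamma$, $\sigma/\gamma<s<1$ and $1<p<\infty$ are consumed ($\sigma>0$ for the low-frequency sum, $s\gamma>\sigma$ for the high-frequency sum, and $p$ in the admissible range of the maximal operator after raising to the power $\sigma/s$). Since this is a standard fact — it is \cite[Proposition A.1]{Visan} — the write-up may simply quote that reference, but the argument above is the one I would reproduce if a self-contained proof were desired.
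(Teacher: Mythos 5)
The paper does not prove this lemma at all: it is quoted verbatim (in the special case $p_1=\infty$) from Visan \cite[Proposition A.1]{Visan}, so there is no internal argument to compare against. What you have written is essentially the standard proof behind that citation, and it is correct: the square-function characterization of $\||D|^\sigma\cdot\|_{L^p}$ for $1<p<\infty$, the cancellation $\int\psi_j=0$ combined with the H\"older continuity of $F$, the two complementary increment bounds $|f(x-y)-f(x)|\lesssim\min\bigl(\norm{f}_{L^\infty},\,|y|^s(M(|D|^sf)(x)+M(|D|^sf)(x-y))\bigr)$, and the geometric summation across the crossover scale $2^{j_0s\gamma}=A(x)^\gamma/\norm{f}_{L^\infty}^{\gamma}$ do reproduce the claimed exponents $\gamma-\sigma/s$ and $\sigma/s$, with the hypotheses $\sigma>0$ and $s\gamma>\sigma$ consumed exactly where you say they are. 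One point deserves an explicit sentence rather than being absorbed into "exponent bookkeeping": the final application of the Hardy--Littlewood maximal inequality (to the iterated maximal function $A=MM(|D|^sf)$ arising from the shifted term $M(|D|^sf)(x-y)$) requires $p\sigma/s>1$, which does \emph{not} follow from the stated hypotheses $1<p<\infty$, $\sigma/s<\gamma<1$ alone; it is an implicit assumption of the lemma as formulated (inherited from the reference). It is harmless here because in every invocation in Section \ref{section_w} one has $\sigma=\delta-1$ and $p=d/(\delta-1)$, so $p\sigma/s=d/s>1$, but a self-contained write-up should record that condition.
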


For short we set
$
\psi(x):=|x|^{-\beta}|\varphi(x)|^{\alpha}
$
so that $N(\varphi)=\lambda \varphi \psi$ and
\begin{align}
\label{lemma_w_proof_1}
\nabla (\varphi e^{i\mu\psi})&=(\nabla\varphi +i\mu\varphi\nabla \psi)e^{i\mu\psi},\\
\label{lemma_w_proof_2}
\nabla (N(\varphi)e^{i\mu\psi})&=\lambda\Big((\nabla \varphi)\psi +\varphi\nabla\psi+i\mu\varphi\psi\nabla \psi\Big)e^{i\mu\psi}. 
\end{align}
Note that $\varphi\nabla \psi$ is given by
\begin{align}
\varphi\nabla\psi
\label{lemma_w_proof_3}
=-\beta |x|^{-2}x\varphi \psi+\frac\alpha2\psi\nabla\varphi+\frac\alpha2|x|^{-\beta}|\varphi|^{\alpha-2}\varphi^2\overline{\nabla\varphi}\end{align}
which is well-defined for all $\alpha>0$ even when $\nabla \psi$ makes no sense for $\alpha<1$. 

We begin with the case $d=1$. 

\begin{proof}[Proof of Lemma \ref{lemma_w} in case of $d=1$] We first let $\delta=1$ and use \eqref{lemma_w_proof_1} and \eqref{lemma_w_proof_3} to find
\begin{align*}
\norm{\varphi e^{i\mu\psi}}_{H^1}
&\lesssim (\norm{\varphi}_{L^2}+\norm{\partial_x\varphi}_{L^2}+|\mu|\norm{\varphi\partial_x\psi}_{L^2})\norm{e^{i\mu\psi}}_{L^\infty}\\
&\lesssim\norm{\varphi}_{H^1}+|\mu|(\norm{\psi\partial_x\varphi}_{L^2}+|\beta|\norm{|x|^{-1}\varphi\psi}_{L^2})\\
&\lesssim (\norm{\varphi}_{H^1}+|\beta|\norm{|x|^{-1}\varphi}_{L^2})(1+|\mu|\norm{\psi}_{L^\infty})\\
&\lesssim E_1E_2.
\end{align*}
Similarly, it follows from \eqref{lemma_w_proof_2} and \eqref{lemma_w_proof_3} that
\begin{align*}
\norm{N(\varphi)e^{i\mu\psi}}_{H^1}
&\lesssim  (\norm{\varphi\psi}_{L^2}+\norm{(\partial_x\varphi)\psi}_{L^2}+\norm{\varphi\partial_x\psi}_{L^2}+|\mu|\norm{\psi}_{L^\infty}\norm{\varphi\partial_x\psi}_{L^2}))\norm{e^{i\mu\psi}}_{L^\infty}\\
&\lesssim \norm{\varphi}_{H^1}\norm{\psi}_{L^\infty}+(1+|\mu|\norm{\psi}_{L^\infty})(\norm{\psi\partial_x\varphi}_{L^2}+|\beta|\norm{|x|^{-1}\varphi\psi}_{L^2})
\\
&\lesssim (\norm{\varphi}_{H^1}+|\beta|\norm{|x|^{-1}\varphi}_{L^2})\norm{\psi}_{L^\infty}(1+|\mu|\norm{\psi}_{L^\infty})\\
&\lesssim E_1E_2(1+|\mu|E_2).
\end{align*}

Suppose next $1/2<\delta<1$. Since $\alpha>1$, it is easy to see that for any $z_1,z_2\in \C$, 
\begin{align}
\label{lemma_w_proof_4_1}
||z_1|^\alpha-|z_2|^\alpha|&\lesssim (|z_1|^{\alpha-1}+|z_2|^{\alpha-1})|z_1-z_2|,\\
\label{lemma_w_proof_4_2}
|e^{i\mu |z_1|^\alpha}-e^{i\mu|z_2|^\alpha}|&\lesssim |\mu|(|z_1|^{\alpha-1}+|z_2|^{\alpha-1})|z_1-z_2|.
\end{align}
Then we use a characterization of the homogeneous Sobolev norm $\norm{\cdot}_{\dot H^\delta}$ via the Gagliardo semi-norm (see \cite[Proposition 3.4]{NPV}) and \eqref{lemma_w_proof_4_1} with $z_1=\psi(x)^{1/\alpha}$ and $z_2=\psi(y)^{1/\alpha}$ to find
\begin{align}
\nonumber
\norm{\psi}_{\dot H^\delta}
&\lesssim \left(\iint \frac{|\psi(x)-\psi(y)|^2}{|x-y|^{1+2\delta}}dxdy\right)^{1/2}\\
\nonumber
&\lesssim \norm{|x|^{-\beta/\alpha}\varphi}_{L^\infty}^{\alpha-1}\norm{|x|^{-\beta/\alpha}\varphi}_{\dot H^\delta}\\
\label{lemma_w_proof_4}&\lesssim E_2.
\end{align}
Using \eqref{lemma_w_proof_4_2} instead of \eqref{lemma_w_proof_4_1}, we similarly have
\begin{align}
\label{lemma_w_proof_5}
\norm{e^{i\mu\psi}}_{\dot H^\delta}
\lesssim |\mu|\norm{|x|^{-\beta/\alpha}\varphi}_{L^\infty}^{\alpha-1}\norm{|x|^{-\beta/\alpha}\varphi}_{\dot H^\delta}
\lesssim |\mu| E_2.
\end{align}
Then it follows from Lemma \ref{lemma_chain}, \eqref{lemma_w_proof_5} and  the embedding $H^\delta\subset L^\infty$ that
\begin{align*}
\norm{\varphi e^{i\mu\psi}}_{H^\delta}
&\lesssim \norm{\varphi}_{L^2}+\norm{\varphi}_{\dot H^\delta}\norm{e^{i\mu\psi}}_{L^\infty}+\norm{\varphi}_{L^\infty}\norm{e^{i\mu\psi}}_{\dot H^\delta}\\
&\lesssim E_1(1+|\mu|E_2).
\end{align*}
Similarly, we know by Lemma \ref{lemma_chain}, \eqref{lemma_w_proof_4} and \eqref{lemma_w_proof_5} that
\begin{align*}
\norm{N(\varphi)e^{i\mu\psi}}_{H^\delta}
&\lesssim \norm{\varphi \psi}_{L^2}+\norm{\varphi}_{\dot H^\delta}\norm{\psi}_{L^\infty}+\norm{\varphi}_{L^\infty}(\norm{\psi}_{\dot H^\delta}+\norm{\psi}_{L^\infty}\norm{e^{i\mu\psi}}_{\dot H^\delta})\\
&\lesssim \norm{\varphi}_{H^\delta}(\norm{\psi}_{L^\infty}+\norm{\psi}_{\dot H^\delta}+\norm{\psi}_{L^\infty}\norm{e^{i\mu\psi}}_{\dot H^\delta})\\
&\lesssim E_1E_2(1+|\mu|E_2),
\end{align*}
which completes the proof of the lemma for the case $d=1$.
\end{proof}


The following lemma is the core of the proof in case of $d=2,3$. 

\begin{lemma}
\label{lemma_psi}
Let $d=2,3$, $\beta$ and $\alpha$ satisfy \eqref{long_range_INLS}, $1<\delta<1+\alpha$ and $\mu\in \R$. Then we have
\begin{align}
\label{lemma_psi_1}
\norm{|D|^{\delta-1}\psi}_{L^{\frac{d}{\delta-1}}}
&\lesssim E_2,\\
\label{lemma_psi_2}
\norm{|D|^{\delta-1} e^{i\mu \psi}}_{L^{\frac{d}{\delta-1}}}
&\lesssim |\mu| E_2,\\
\label{lemma_psi_3}
\norm{|D|^{\delta-1} (\psi e^{i\mu \psi})}_{L^{\frac{d}{\delta-1}}}
&\lesssim E_2(1+|\mu|E_2),\\ 
\label{lemma_psi_4}
\norm{|D|^{\delta-1}(\varphi \nabla \psi)}_{L^2}
&\lesssim E_1E_2.
\end{align}
\end{lemma}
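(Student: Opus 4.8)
The plan is to reduce all four estimates to three ingredients: the Leibniz-type bound for H\"older continuous nonlinearities (Lemma \ref{lemma_fractional}), the fractional Leibniz rule (Lemma \ref{lemma_chain}), and a single weighted Sobolev estimate for the function $g:=|x|^{-\beta/\alpha}\varphi$, which satisfies $\psi=|g|^\alpha$ and $\norm{g}_{L^\infty}=\norm{|x|^{-\beta/\alpha}\varphi}_{L^\infty}$. The key preliminary estimate is
\[
\norm{|D|^s g}_{L^{d/s}}\lesssim \norm{\varphi}_{H^\delta}+|\beta|\,\norm{|x|^{-1}\varphi}_{H^{\delta-1}}=:E_1 ,
\]
where $s$ is defined by $2s=1+(\delta-1)/\alpha$; note that $1/2<s<1$ and $s<d/2$, both being consequences of $\delta<1+\alpha$. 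To prove it I would first invoke the Sobolev embedding $\norm{|D|^s g}_{L^{d/s}}\lesssim\norm{g}_{\dot H^{d/2}}$ (legitimate since $s<d/2$), which reduces matters to $\norm{|x|^{-\beta/\alpha}\varphi}_{\dot H^{d/2}}\lesssim E_1$, and then expand the derivatives by the Leibniz rule. The terms in which a derivative hits the weight produce factors of the form $|x|^{-\beta/\alpha}(|x|^{-1}\varphi)$, as well as $|x|^{-1}\varphi$ dressed by the bounded $0$-homogeneous factor $x/|x|$ when $d=3$; these are absorbed by the fractional Hardy inequality $\norm{|x|^{-\sigma}f}_{L^2}\lesssim\norm{|D|^\sigma f}_{L^2}$ (applicable since $0<\beta/\alpha<d/2$) together with the weighted $L^2$-boundedness of the Riesz transforms when $d=3$, where the order $d/2=3/2$ is not an integer, and by Sobolev embedding after noting that $\beta/\alpha\le\delta-1$ (which follows from $\delta>d/2+\beta/\alpha\ge1+\beta/\alpha$). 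This is precisely the step at which the hypotheses $\beta<\min\{d/2,1\}$ and \eqref{theorem_1_1} enter.

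Granting this, \eqref{lemma_psi_1} and \eqref{lemma_psi_2} follow by applying Lemma \ref{lemma_fractional} with $\sigma=\delta-1$, $p=d/(\delta-1)$ and the above $s$ to the maps $z\mapsto|z|^\alpha$ and $z\mapsto e^{i\mu|z|^\alpha}$; both are H\"older continuous of order $\alpha$, the second with H\"older seminorm $\lesssim|\mu|$ by the concavity of $t\mapsto t^\alpha$. The output $\norm{g}_{L^\infty}^{\alpha-(\delta-1)/s}\norm{|D|^s g}_{L^{d/s}}^{(\delta-1)/s}$ is then dominated by $\norm{g}_{L^\infty}^{\alpha-(\delta-1)/s}E_1^{(\delta-1)/s}$, which is exactly the second summand in the definition of $E_2$ (the exponents match because $s$ is the very exponent used there), hence $\lesssim E_2$. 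Estimate \eqref{lemma_psi_3} then follows from \eqref{lemma_psi_1}, \eqref{lemma_psi_2}, the fractional Leibniz rule applied to $\psi\cdot e^{i\mu\psi}$ (once distributing the derivatives so that $e^{i\mu\psi}$ stays in $L^\infty$, once so that $\psi$ stays in $L^\infty$), and $\norm{\psi}_{L^\infty}=\norm{g}_{L^\infty}^\alpha\le E_2$.

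For \eqref{lemma_psi_4} I would start from the identity \eqref{lemma_w_proof_3}, which expresses $\varphi\nabla\psi$ as the sum of $-\beta|x|^{-2}x\,\varphi\psi$, $(\alpha/2)\psi\nabla\varphi$ and $(\alpha/2)|x|^{-\beta}|\varphi|^{\alpha-2}\varphi^2\overline{\nabla\varphi}$. For the last two terms I would use the fractional Leibniz rule: either put $\delta-1$ derivatives on the nonlinear factor --- estimated by \eqref{lemma_psi_1}, respectively by Lemma \ref{lemma_fractional} applied to $z\mapsto|z|^{\alpha-2}z^2$, which is H\"older of order $\alpha$ by Lemma \ref{lemma_nonlinear}\,\eqref{lemma_nonlinear_2} --- and pair it with $\nabla\varphi$ in a suitable $L^q$ via Sobolev embedding, or put $\delta-1$ derivatives on $\nabla\varphi$ and leave the nonlinear factor in $L^\infty$; the Sobolev exponents are admissible here because $\delta<1+\alpha\le1+2/d\le1+d/2$, so that $d>2(\delta-1)$. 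The genuinely new difficulty, and the step I expect to be the main obstacle, is the first term $|x|^{-2}x\,\varphi\psi$ --- the one produced by differentiating the singular coefficient $|x|^{-\beta}$. Writing it as $\frac{x}{|x|}\,(|x|^{-1}\varphi)\,\psi$ with $|x|^{-1}\varphi\in H^{\delta-1}$, one must commute $|D|^{\delta-1}$ past the bounded but discontinuous $0$-homogeneous factor $x/|x|$; this is possible precisely because $\delta-1<d/2$ (again a consequence of $\delta<1+\alpha\le1+2/d$), so that multiplication by $x/|x|$ is bounded on $\dot H^{\delta-1}$, the remaining singular weights being handled as in the first paragraph by fractional Hardy, Sobolev embedding and the weighted boundedness of the Riesz transforms. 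Assembling the three contributions and using $\norm{\psi}_{L^\infty}\le E_2$, $\norm{\varphi}_{H^\delta}\le E_1$ and $\norm{|x|^{-1}\varphi}_{H^{\delta-1}}\le E_1$ gives \eqref{lemma_psi_4}.
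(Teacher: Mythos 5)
Your proposal follows the paper's strategy almost step for step: Lemma \ref{lemma_fractional} applied to the $\alpha$-H\"older maps $z\mapsto|z|^\alpha$, $z\mapsto e^{i\mu|z|^\alpha}$ and $z\mapsto|z|^{\alpha-2}z^2$ evaluated at $g=|x|^{-\beta/\alpha}\varphi$ with the same exponent $s$ from $2s=1+(\delta-1)/\alpha$; the fractional Leibniz rule for \eqref{lemma_psi_3} and for peeling $\psi$ off the three pieces of \eqref{lemma_w_proof_3}; and, for the term produced by differentiating $|x|^{-\beta}$, the $A_2$-weighted boundedness of the Riesz transform, which is exactly what your statement that multiplication by $x/|x|$ is bounded on $\dot H^{\delta-1}$ for $\delta-1<d/2$ amounts to after conjugation by the Fourier transform (the paper writes this as $|x|^{-2}x_j\varphi=-i\F R_j\F^{-1}(|x|^{-1}\varphi)$ and invokes Coifman--Fefferman with the weight $|\xi|^{2(\delta-1)}$). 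The one place you genuinely deviate is the preliminary bound $\norm{|D|^sg}_{L^{d/s}}\lesssim E_1$: you reduce it to $\norm{g}_{\dot H^{d/2}}\lesssim E_1$ via the homogeneous embedding $\dot H^{d/2}\subset \dot W^{s,d/s}$, whereas the paper uses $\norm{|D|^sg}_{L^{d/s}}\lesssim\norm{\<D\>g}_{L^d}$, i.e.\ the embedding $W^{1,d}\subset\dot W^{s,d/s}$, so that the product $|x|^{-\beta/\alpha}\varphi$ is only ever differentiated once and the resulting weights are absorbed by the fractional Hardy inequality in $L^d$ together with $\dot H^{d/2-1}\subset L^d$ and $H^\delta\subset H^{d/2+\beta/\alpha}$. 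Your route is fine for $d=2$, where $d/2=1$ is an integer and the classical product rule applies, but for $d=3$ the phrase ``expand the derivatives by the Leibniz rule'' does not literally apply to $|D|^{3/2}$ of a product with a singular weight: you would need something like $\norm{h}_{\dot H^{3/2}}\sim\norm{\nabla h}_{\dot H^{1/2}}$ followed by weighted bounds of the form $\norm{|x|^{-\sigma}f}_{\dot H^{1/2}}\lesssim\norm{f}_{\dot H^{1/2+\sigma}}$ and the $\dot H^{1/2+\beta/\alpha}$-boundedness of multiplication by $x/|x|$. These are true in the stated parameter range and within reach of the tools you already cite, so this is an unnecessary complication rather than an error, but as written that step is under-justified; adopting the paper's $W^{1,d}$ reduction removes the issue entirely. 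Everything else, including the exponent bookkeeping ($1/2<s<1$, $\beta/\alpha<\min\{1,d/2\}$, $\beta/\alpha\le\delta-1$, $\delta-1<d/2$, $d>2(\delta-1)$), matches the paper's proof.
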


\begin{proof}
Note that $0<\alpha<1$ under the assumption. Since $F_1(z)=|z|^\alpha$ is $\alpha$-H\"older continuous by Lemma \ref{lemma_nonlinear}, we can apply Lemma \ref{lemma_fractional} to $\psi=|x|^{-\beta}|\varphi|^\alpha=F_1(|x|^{-\beta/\alpha}\varphi)$ obtaining
\begin{align}
\label{proof_lemma_psi_proof_1}
\norm{|D|^{\delta-1} \psi}_{L^{\frac{d}{\delta-1}}}
\lesssim \norm{|x|^{-\beta/\alpha}\varphi}_{L^\infty}^{\alpha-(\delta-1)/s}\norm{|D|^s(|x|^{-\beta/\alpha}\varphi)}_{L^{d/s}}^{(\delta-1)/s}
\end{align}
for any $s$ satisfying $(\delta-1)/\alpha<s<1$. For simplicity we choose such an exponent $s$ by the relation $2s=1+(\delta-1)/\alpha$. 
Since $d(1/d-s/d)=1-s$, Sobolev's inequality implies
\begin{align}
\nonumber
\norm{|D|^s(|x|^{-\beta/\alpha}\varphi)}_{L^{d/s}}
&\lesssim \norm{\<D\>(|x|^{-\beta/\alpha}\varphi)}_{L^d}\\
\nonumber
&\lesssim \norm{\nabla(|x|^{-\beta/\alpha}\varphi)}_{L^d}+\norm{|x|^{-\beta/\alpha}\varphi}_{L^d}\\
\label{proof_lemma_psi_proof_2}
&\lesssim |\beta|\norm{|x|^{-\beta/\alpha-1}\varphi}_{L^d}+\norm{|x|^{-\beta/\alpha}\nabla\varphi}_{L^d}+\norm{|x|^{-\beta/\alpha}\varphi}_{L^d},
\end{align}
where we have used the norm equivalence $W^{1,d}(\R^d)=\<D\>^{-1}L^d(\R^d)$. 
Since $0\le\beta/\alpha<1$, we can apply the following fractional Hardy inequality (see \cite{Herbst})
\begin{align*}
\norm{|x|^{-\beta/\alpha} f}_{L^d(\R^d)}\lesssim \norm{|D|^{\beta/\alpha} f}_{L^d(\R^d)}
\end{align*}
and the Sobolev embedding $\dot H^{d/2-1}\subset L^d$ to obtain 
\begin{align*}
&\norm{|x|^{-\beta/\alpha-1}\varphi}_{L^d}\lesssim \norm{|D|^{\beta/\alpha}(|x|^{-1}\varphi)}_{L^d}\lesssim \norm{|x|^{-1}\varphi}_{\dot H^{d/2+\beta/\alpha-1}}\lesssim \norm{|x|^{-1}\varphi}_{H^{\delta-1}},\\
&\norm{|x|^{-\beta/\alpha}\nabla\varphi}_{L^d}+\norm{|x|^{-\beta/\alpha}\varphi}_{L^d}\lesssim \norm{\varphi}_{H^{d/2+\beta/\alpha}}\lesssim \norm{\varphi}_{H^\delta}.
\end{align*}
These two estimates, combined with \eqref{proof_lemma_psi_proof_1} and \eqref{proof_lemma_psi_proof_2}, imply \eqref{lemma_psi_1}. 

The proof of \eqref{lemma_psi_2} is almost identical to that of \eqref{lemma_psi_1}. Indeed, since $F_2(z)=e^{i\mu|z|^\alpha}$ satisfies $|F_2(z_1)-F_2(z_2)|\lesssim |\mu||z_1-z_2|^\alpha$ by Lemma \ref{lemma_nonlinear}, Lemma \ref{lemma_chain} with $e^{i\mu\psi}=F_2(|x|^{-\beta/\alpha}\varphi)$ shows
$$
\norm{|D|^{\delta-1} e^{i\mu\psi}}_{L^{\frac{d}{\delta-1}}}
\lesssim |\mu| \norm{|x|^{-\beta/\alpha}\varphi}_{L^\infty}^{\alpha-(\delta-1)/s}\norm{|D|^s(|x|^{-\beta/\alpha}\varphi)}_{L^{d/s}}^{(\delta-1)/s}\lesssim |\mu|E_2.
$$

The estimate \eqref{lemma_psi_3} follows from \eqref{lemma_psi_1} and \eqref{lemma_psi_2}. Indeed, Lemma \ref{lemma_chain} implies
\begin{align*}
\norm{|D|^{\delta-1} (\psi e^{i\mu \psi})}_{L^{\frac{d}{\delta-1}}}
&\lesssim \norm{|D|^{\delta-1}\psi}_{L^{\frac{d}{\delta-1}}}\norm{e^{i\mu \psi}}_{L^\infty}+\norm{\psi}_{L^\infty}\norm{|D|^{\delta-1} e^{i\mu \psi}}_{L^{\frac{d}{\delta-1}}}\\
&\lesssim E_2(1+|\mu|E_2).
\end{align*}

To show \eqref{lemma_psi_4}, we  deal with the three terms $\beta\psi|x|^{-2}x\varphi$, $
\psi\nabla\varphi$ and $
|x|^{-\beta}|\varphi|^{\alpha-2}\varphi^2\overline{\nabla\varphi}$ in \eqref{lemma_w_proof_3} separately. For the first term $\beta\psi|x|^{-2}x\varphi$, we use Lemma \ref{lemma_chain} and \eqref{lemma_psi_1} to find
\begin{align*}
&\norm{|D|^{\delta-1}(\psi|x|^{-2}x\varphi)}_{L^2}\\
&\lesssim \norm{|D|^{\delta-1}\psi}_{L^{\frac{d}{\delta-1}}}\norm{|x|^{-2}x\varphi}_{L^{\frac{2d}{d-2(\delta-1)}}}+\norm{\psi}_{L^\infty}\norm{|D|^{\delta-1}(|x|^{-2}x\varphi)}_{L^2}\\
&\lesssim E_2\norm{|D|^{\delta-1}(|x|^{-2}x\varphi)}_{L^2},
\end{align*}
where we have also used $\dot H^{\delta-1}\subset L^{\frac{2d}{d-2(\delta-1)}}$. 
For each $j=1,...,d$, we can write
$$
|x|^{-2}x_j\varphi=\F\F^{-1} \Big(|x|^{-1}x_j\F\F^{-1}(|x|^{-1}\varphi)\Big)=-i\F R_j\F^{-1}(|x|^{-1}\varphi),
$$
where $R_j=i\F^{-1}|x|^{-1}x_j\F$ is the $j$-the direction Riesz transform (in the Fourier side). It is known that $R_j$ is a Calder\'on--Zygmund operator (see \cite{CoFe}). Moreover, $|\xi|^{2(\delta-1)}$ is a Muckenhoupt $A_2$-weight since $0<\delta-1<\alpha<d/2$ if $d=2,3$. The Coifman--Fefferman theorem \cite{CoFe} then shows
$$
\norm{|\xi|^{\delta-1}R_jf}_{L^2(\R^d_\xi)}\lesssim \norm{|\xi|^{\delta-1}f}_{L^2(\R^d_\xi)}.
$$
Combining this estimate with the above computation and the Plancherel theorem, we obtain
$$
\norm{|D|^{\delta-1}(|x|^{-2}x_j\varphi)}_{L^2}\lesssim \norm{|\xi|^{\delta-1}\F^{-1}(|x|^{-1}\varphi)}_{L^2}\lesssim \norm{|D|^{\delta-1}(|x|^{-1}\varphi)}_{L^2}
$$
for each $j=1,...,d$. Therefore,
\begin{align}
\label{proof_lemma_psi_proof_6}
\norm{|D|^{\delta-1}(\beta\psi|x|^{-2}x\varphi)}_{L^2}\lesssim E_1E_2.
\end{align}
For the second term $
\psi\nabla\varphi$, we know by Lemma \ref{lemma_chain} and \eqref{lemma_psi_1} that
\begin{align}
\nonumber
\norm{|D|^{\delta-1}(\psi \nabla\varphi)}_{L^2}
&\lesssim \norm{|D|^{\delta-1}\psi}_{L^{\frac{d}{\delta-1}}}\norm{\nabla\varphi}_{L^{\frac{2d}{d-2(\delta-1)}}}+\norm{\psi}_{L^\infty}\norm{|D|^{\delta-1}\nabla\varphi}_{L^2}\\
\label{proof_lemma_psi_proof_7}
&\lesssim E_1E_2.
\end{align}
For the last term $
|x|^{-\beta}|\varphi|^{\alpha-2}\varphi^2\overline{\nabla\varphi}$, we set $\widetilde\psi:=|x|^{-\beta}|\varphi|^{\alpha-2}\varphi^2$ and $F_3(z)=|z|^{\alpha-2}z^2$. Since $F_3$ is $\alpha$-H\"older continuous by Lemma \ref{lemma_nonlinear} and $\widetilde \psi=F_3(|x|^{-\beta/\alpha}\varphi)$, the same argument as above  shows
\begin{align}
\norm{|D|^{\delta-1}(\widetilde\psi\overline{\nabla\varphi})}_{L^2}
\label{proof_lemma_psi_proof_8}\lesssim E_1E_2.
\end{align}
Th estimate \eqref{lemma_psi_4} now follows from \eqref{proof_lemma_psi_proof_6}--\eqref{proof_lemma_psi_proof_8}. This completes the proof of the lemma. 
\end{proof}

We are now ready to show Lemma \ref{lemma_w} in case of $d=2,3$. 
\begin{proof}[Proof of Lemma \ref{lemma_w} in case of $d=2,3$]
We first observe that $\norm{f}_{H^\delta}\lesssim \norm{f}_{L^2}+\norm{f}_{\dot H^\delta}$ and 
\begin{align}
\label{lemma_w_proof_6}
\norm{\varphi e^{i\mu\psi}}_{L^2}\le E_1,\quad
\norm{N(\varphi)e^{i\mu\psi}}_{L^2}\le \norm{\varphi}_{L^2}\norm{\psi}_{L^\infty}\le E_1E_2.
\end{align}
To deal with the $\dot H^\delta$-norms of $\varphi e^{i\mu\psi}$ and $N(\varphi)e^{i\mu\psi}$, we next apply Lemma \ref{lemma_chain} to the right hand side in \eqref{lemma_w_proof_1}  and use the embedding $\dot H^{\delta-1}\subset L^{\frac{2d}{d-2(\delta-1)}}$, \eqref{lemma_psi_2} and \eqref{lemma_psi_4} to find
\begin{align}
\nonumber
\norm{\varphi e^{i\mu\psi}}_{\dot H^\delta}
&\lesssim \norm{|D|^{\delta-1}\nabla\varphi}_{L^2}\norm{e^{i\mu\psi}}_{L^\infty}+\norm{\nabla \varphi}_{L^{\frac{2d}{d-2(\delta-1)}}}\norm{|D|^{\delta-1} e^{i\mu\psi}}_{L^{\frac{d}{\delta-1}}}\\
\nonumber
&\quad\quad+|\mu|\norm{|D|^{\delta-1}(\varphi\nabla\psi)}_{L^2}\norm{e^{i\mu\psi}}_{L^\infty}+|\mu|\norm{\varphi\nabla\psi}_{L^{\frac{2d}{d-2(\delta-1)}}}\norm{|D|^{\delta-1} e^{i\mu\psi}}_{L^{\frac{d}{\delta-1}}}\\
\nonumber
&\lesssim \Big(\norm{\varphi}_{H^{\delta}}+|\mu|\norm{|D|^{\delta-1}(\varphi\nabla\psi)}_{L^2}\Big)\Big(1+\norm{|D|^{\delta-1} e^{i\mu\psi}}_{L^{\frac{d}{\delta-1}}}\Big)\\
\nonumber
&\lesssim (E_1+|\mu|E_1E_2)(1+|\mu| E_2)\\
\label{lemma_w_proof_7}
&\lesssim E_1(1+|\mu| E_2)^2. 
\end{align}
Similarly, we obtain by means of \eqref{lemma_w_proof_2} and Lemma \ref{lemma_chain} that
\begin{align*}
&\norm{N(\varphi)e^{i\mu\psi}}_{\dot H^{\delta}}\\
&\lesssim \norm{|D|^{\delta-1}((\nabla\varphi)\psi e^{i\mu\psi})}_{L^2}
+\norm{|D|^{\delta-1}(\varphi(\nabla\psi) e^{i\mu\psi})}_{L^2}
+|\mu|\norm{|D|^{\delta-1}(\varphi\psi (\nabla\psi) e^{i\mu\psi})}_{L^2}\\
&\lesssim \norm{|D|^{\delta-1}\nabla \varphi}_{L^2}\norm{\psi e^{i\mu\psi}}_{L^\infty}+\norm{\nabla\varphi}_{L^{\frac{2d}{d-2(\delta-1)}}}\norm{|D|^{\delta-1}(\psi e^{i\mu\psi})}_{L^{\frac{d}{\delta-1}}}\\
&\quad\quad+\norm{|D|^{\delta-1}(\varphi\nabla\psi)}_{L^2}\norm{e^{i\mu\psi}}_{L^\infty}+\norm{\varphi\nabla\psi}_{L^{\frac{2d}{d-2(\delta-1)}}}\norm{|D|^{\delta-1} e^{i\mu\psi}}_{L^{\frac{d}{\delta-1}}}\\
&\quad\quad+|\mu|\norm{|D|^{\delta-1}(\varphi\nabla\psi)}_{L^2}\norm{\psi e^{i\mu\psi}}_{L^\infty}+|\mu|\norm{\varphi\nabla\psi}_{L^{\frac{2d}{d-2(\delta-1)}}}\norm{|D|^{\delta-1}(\psi e^{i\mu\psi})}_{L^{\frac{d}{\delta-1}}}\\
&\lesssim \norm{\varphi}_{H^{\delta}}\Big(\norm{\psi}_{L^\infty}+\norm{|D|^{\delta-1}(\psi e^{i\mu\psi})}_{L^{\frac{d}{\delta-1}}}\Big)\\
&\quad\quad+\norm{|D|^{\delta-1}(\varphi\nabla\psi)}_{L^2}\Big(1+\norm{|D|^{\delta-1}(\psi e^{i\mu\psi})}_{L^{\frac{d}{\delta-1}}}\Big)\\
&\quad\quad+|\mu|\norm{|D|^{\delta-1}(\varphi\nabla\psi)}_{L^2}\Big(\norm{\psi}_{L^\infty}+\norm{|D|^{\delta-1}(\psi e^{i\mu\psi})}_{L^{\frac{d}{\delta-1}}}\Big).
\end{align*}
This estimate, together with \eqref{lemma_psi_3} and \eqref{lemma_psi_4}, shows
\begin{align}
\nonumber
\norm{N(\varphi)e^{i\mu\psi}}_{\dot H^{\delta}}
&\lesssim E_1\{E_2+E_2(1+|\mu|E_2)\}+E_1E_2\{1+E_2(1+|\mu|E_2)\}\\
\nonumber
&\quad\quad+|\mu|E_1E_2\{E_2+E_2(1+|\mu|E_2)\}
\\
\nonumber
&\lesssim E_1E_2(1+(1+|\mu|)E_2+|\mu|E_2^2+|\mu|^2E_2^2)\\
\label{lemma_w_proof_8}
&\lesssim E_1E_2(1+\<\mu\>E_2)^2. 
\end{align}
The lemma for the case $d=2,3$ now follows from \eqref{lemma_w_proof_6}, \eqref{lemma_w_proof_7} and \eqref{lemma_w_proof_8}. 
\end{proof}

\section{Proof of Theorem \ref{theorem_C_1}}
\label{section_C}
Here we prove Theorem \ref{theorem_C_1}. The global well-posedness of  \eqref{INLS} in $H^1$ was proved by \cite{GeSt} (see also \cite{Dinh}). Moreover, if $u(0)=u_0$ belong to $\Sigma$, then so does the solution $u$. Indeed, taking the estimate \eqref{theorem_C_1_proof_5} below into account, one can show $u\in C(\R;\Sigma)$ by the completely same argument as that in  \cite[Lemma 6.5.2]{Cazenave}. This completes the part of the existence of the global solution in $\Sigma$. 

The proof of the scattering \eqref{theorem_C_1_2} is based on the method by Tsutsumi--Yajima \cite{TsYa}. By the decomposition \eqref{MDFM} and the fact $\mathcal M(t)\to1$ strongly in $L^2$ as $\to\pm\infty$, \eqref{theorem_C_1_2} holds if and only if
\begin{align*}
\norm{\mathcal D(t)^{-1}\mathcal M(t)^{-1}u(t)-\mathcal Fu_\pm}_{L^2}\to0,\quad t\to \pm\infty.
\end{align*}
We may consider the forward case $t\to+\infty$ only without loss of generality. Let $\mathcal T(t)f(t):=f(t^{-1}/4)$ and let $v$ be the pseudo-conformal transform of $u$:
\begin{align}
\label{PCT}
v(t,x):=\overline{[\mathcal T(t)\mathcal D(t)^{-1}\mathcal M(t)^{-1}u](t,x)}=\frac{1}{(2it)^{d/2}}e^{\frac{i|x|^2}{4t}}\overline{u}\Big(\frac{1}{4t},\frac{x}{2t}\Big),\quad t>0.
\end{align}
Then it is enough to show $v(t)$ converges strongly in $L^2$ as $t\to +0$. To this end, we first observe $v\in C((0,\infty);\Sigma)$ since the map \eqref{PCT} leaves $\Sigma$ invariant. Note that $H^1$ is not invariant under \eqref{PCT}, which is the reason why we consider the $\Sigma$-solution. On the other hand, the $L^2$-norm is also conserved under \eqref{PCT}. With the $L^2$-conservation law for \eqref{INLS}, we thus have
\begin{align}
\label{theorem_C_1_proof_1}
\norm{v(t)}_{L^2}=\norm{v(1)}_{L^2},\quad 0<t\le1.
\end{align}
Moreover, a direct calculation yields that $v$ solves
\begin{align}
\label{TINLS}
i\partial_t v+\Delta v=|2t|^{\sigma}N(v),\quad \sigma:=\alpha d/2+\beta-2\in (-1,0],
\end{align}
Recalling $N(v)=\lambda |x|^{-\beta}|v|^\alpha v$, we next define an energy $E(t)$  for \eqref{TINLS} by
$$
E(t)=(2t)^{-\sigma} \frac12\norm{\nabla v(t)}_{L^2}^2+\frac{\lambda}{\alpha+2}\int |x|^{-\beta}|v(t)|^{\alpha+2}dx.
$$
Note that $E(1)$ satisfies
\begin{align}
\label{theorem_C_1_proof_2}
E(1)\lesssim \norm{v(1)}_{H^1}^2+\norm{v(1)}_{H^1}^{\alpha+2}
\end{align}
 Indeed, H\"older's inequality \eqref{Holder} implies
$$
\left(\int |x|^{-\beta}|v|^{\alpha+2}dx\right)^{\frac{1}{\alpha+2}}
=\norm{|x|^{-\frac{\beta}{\alpha+2}}v}_{L^{\alpha+2}}
\lesssim \norm{|x|^{-\frac{\beta}{\alpha+2}}}_{L^{\frac{d(\alpha+2)}{\beta},\infty}}\norm{v}_{L^{q,\alpha+2}}\lesssim \norm{v}_{L^{q,2}},
$$
where $q>\alpha+2$ is defined by the relation $(\alpha+2)/q=1-{\beta}/{d}$ and the continuous embedding $L^{q,2}\subset L^{q,\alpha+2}$ was used. It is easy to check that $d(1/2-1/q)\le1$ if ${\alpha d}/{2}+\beta\le \alpha+2$, which is satisfied under \eqref{theorem_C_1_1}. 
Sobolev's inequality then shows $\norm{v}_{L^{q,2}}\lesssim \norm{v}_{H^1}$ and hence \eqref{theorem_C_1_proof_2} follows. We next multiply \eqref{TINLS} by $(2t)^{-\sigma}\partial_t \overline{v}$, integrate in $x$ and then take the real part to obtain
\begin{align}
\label{theorem_C_1_proof_3}
\frac{d}{dt}E(t)=-\sigma (2t)^{-\sigma-1}E(t)\ge0,\quad 0<t\le1,
\end{align}
where the last condition in \eqref{theorem_C_1_1} was used to ensure $-\sigma\ge0$. Since $\lambda>0$, by using \eqref{theorem_C_1_proof_1}, \eqref{theorem_C_1_proof_2} and \eqref{theorem_C_1_proof_3}, we obtain for $0<t\le 1$ that
\begin{align}
\label{theorem_C_1_proof_4}
\norm{v(t)}_{L^2}\lesssim1,\quad \norm{\nabla v(t)}_{L^2}\lesssim t^{\sigma/2},\quad \norm{|x|^{-\frac{\beta}{\alpha+2}}v}_{L^{\alpha+2}}\lesssim1,
\end{align}
where the implicit constants depend on $\lambda,\alpha,\norm{v(1)}_{H^1}$, but are independent of $t$. 

Now, given a test function $\varphi\in H^1$ and $0<s<t\le1$, we compute
\begin{align*}
\<v(t)-v(s),\varphi\>
=-i\int_s^t\<\nabla v(\tau),\nabla\varphi\>d\tau-i\int_s^t |2\tau|^{\sigma}\<N(v(\tau)),\varphi\>d\tau.
\end{align*}
By the fact $\sigma>-1$, the above bounds \eqref{theorem_C_1_proof_4} and the estimate
\begin{align}
\label{theorem_C_1_proof_5}
|\<N(v),\varphi\>|
\lesssim \norm{|x|^{-\frac{\beta}{\alpha+2}}v}_{L^{\alpha+2}}^{{\alpha+1}}\norm{|x|^{-\frac{\beta}{\alpha+2}}\varphi}_{L^{\alpha+2}}
\lesssim\norm{\varphi}_{H^1},
\end{align}
the weak limit $\ds v(0):=\wlim_{t\to+0}v(t)$ in $L^2$ exists. Moreover, plugging $\varphi=v(t)$ into the above equation with $s=0$ and using \eqref{theorem_C_1_proof_4}, we have
$$
|\<v(t)-v(0),v(t)\>|\lesssim t^{\sigma/2}\int_0^t \tau^{\sigma/2}d\tau +\int_0^t\tau^\sigma d\tau\lesssim  t^{\sigma+1}.
$$
Therefore, together with the weak-convergence of $v$ proved just above, we find
\begin{align*}
\norm{v(t)-v(0)}^2_{L^2}
&\le |\<v(t)-v(0),v(t)\>|+|\<v(t)-v(0),v(0)\>|\\
&\lesssim t^{\sigma+1}+|\<v(t)-v(0),v(0)\>|\to 0
\end{align*}
as $t \to+0$ since $\sigma>-1$. This completes the proof of Theorem \ref{theorem_C_1}.

\appendix

\section{Derivation of the integral equation}
\label{appendix_A}
Here we derive the integral equation \eqref{integral_equation} for a solution $u$ to \eqref{INLS} subjected to the asymptotic condition \eqref{theorem_1_2}. It follows from the equation \eqref{INLS} that
\begin{align*}
i\partial_t\left(\mathcal F e^{-it\Delta}u\right)=\mathcal F e^{-it\Delta}N(u),
\end{align*}
while a direct calculation shows
$$
i\partial_t w=\frac{1}{2t}N(w).
$$
These two equations, combined with the factorization formula \eqref{MDFM}, imply
\begin{align}
\nonumber
i\partial_t\left(\mathcal F e^{-it\Delta}u-w\right)
\nonumber
&=\mathcal F e^{-it\Delta}\left(N(u)-\frac{1}{2t}\mathcal M\mathcal D\mathcal F\mathcal M\mathcal F^{-1}N(w)\right)\\
\label{appendix_A_1}
&=\mathcal F e^{-it\Delta}\left(N(u)-\frac{1}{2t}\mathcal M\mathcal D N(w)\right)-\frac{1}{2t}\mathcal F e^{-it\Delta}\mathcal R N(w),
\end{align}
where, by virtue of \eqref{long_range_INLS} and \eqref{dilation}, the second term of the right hand side is written in the form
\begin{align}
\nonumber
\frac{1}{2t}\mathcal M\mathcal D N(w)
&=\frac{1}{2t}\mathcal M(t) \frac{1}{(2it)^{d/2}}\left|\frac{x}{2t}\right|^{-\beta} \lambda \left|w\left(\frac{x}{2t}\right)\right|^\alpha w\left(\frac{x}{2t}\right)\\
\nonumber
&=\frac{1}{(2t)^{1-\beta-\alpha d/2}} |x|^{-\beta}\lambda \left|\mathcal M(t) \frac{1}{(2it)^{d/2}}w\left(\frac{x}{2t}\right)\right|^\alpha \mathcal M(t)\frac{1}{(2it)^{d/2}}w\left(\frac{x}{2t}\right)\\
\nonumber
&=N(\mathcal M\mathcal D w)\\
\label{appendix_A_2}
&=N(u_{\mathrm{ap}}).
\end{align}
Moreover, $w(t)$ can be written in the form
\begin{align}
\label{appendix_A_3}
w(t)=\mathcal F e^{-it\Delta} \mathcal M\mathcal D\mathcal F\mathcal M\mathcal F^{-1} w=\mathcal F e^{-it\Delta} (u_{\mathrm{ap}}+\mathcal Rw).
\end{align}
\eqref{appendix_A_1}--\eqref{appendix_A_3} then imply
\begin{align*}
i\partial_t\mathcal F e^{-it\Delta}\left(u-u_{\mathrm{ap}}-\mathcal Rw\right)=\mathcal F e^{-it\Delta}\Big(N(u)-N(u_{\mathrm{ap}})\Big)-\frac{1}{2t}\mathcal F e^{-it\Delta}\mathcal RN(w).
\end{align*}
With the asymptotic condition \eqref{theorem_1_2} at hand, we obtain the integral equation \eqref{integral_equation} from this equation by integrating over $[t,\infty)$ and multiplying $e^{it\Delta}\F^{-1}$. 

\section{Global existence for Cauchy problem}
\label{appendix_B}
Here we prove the global existence of the $L^2$-solutions to the Cauchy problem for \eqref{INLS} in the mass subcritical cases, including the case satisfying \eqref{long_range_INLS}. The following proposition is due to \cite[Theorems 1.2 and 1.7]{Guzman}. For the reader's convenience as well as for the sake of self-containedness, we give details of the proof.

\begin{proposition}
\label{proposition_B_1}
Suppose $d\ge1$, $0<\beta<\min(d,2)$ and $0<\alpha<(4-2\beta)/d$ and set 
$$
p_3=\frac{4(\alpha+2)}{\alpha d+2\beta},\quad q_3=\frac{\alpha+2}{1-\beta/d}.
$$
Then, for any $t_0\in \R$ and $u_0\in L^2(\R^d)$, there exists a unique global solution $u\in C(\R;L^2)\cap L^{p_3}_{\loc}(\R;L^{q_3,2}(\R^d))$ to \eqref{INLS} with the initial condition $u(t_0)=u_0$ such that
\begin{align*}
u(t+t_0)=e^{it\Delta}u_0-i\int_0^te^{i(t-s)\Delta}N(u(s+t_0))ds.
\end{align*}
Moreover, $u\in L^p_{\loc}(\R;L^q(\R^d))$ for any admissible pair $(p,q)$. 
\end{proposition}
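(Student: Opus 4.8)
The plan is to construct the solution by a contraction argument in a Strichartz space built on the pair $(p_3,q_3)$, refined with Lorentz spaces as in Lemma~\ref{lemma_Strichartz}, and then to pass from local to global solutions using conservation of the $L^2$ norm. As a first step I would record that $(p_3,q_3)$ is an admissible pair: a direct computation gives $d(1/2-1/q_3)=(\alpha d+2\beta)/\big(2(\alpha+2)\big)=2/p_3$, and the hypotheses $0<\beta<\min(d,2)$ and $\alpha d+2\beta<4$ guarantee $2\le p_3<\infty$ and $2<q_3<\infty$, so that the improved estimates involving $L^{q_3,2}$ are available. Since $|x|^{-\beta}\in L^{d/\beta,\infty}$, the next and central step is the nonlinear estimate: H\"older's inequality in Lorentz spaces (Lemma~\ref{lemma_Holder}), combined with the homogeneity of Lorentz norms and the embedding $L^{q_3,2}\subset L^{q_3,2(\alpha+1)}$, yields $\norm{N(u)}_{L^{q_3',2}}\lesssim \norm{u}_{L^{q_3,2}}^{\alpha+1}$ once one checks that the dual H\"older exponent $r$ defined by $1/r=1/q_3'-\beta/d$ satisfies $r(\alpha+1)=q_3$. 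Integrating in time over a bounded interval $I$ containing $t_0$ and applying H\"older in $t$ --- this is the point where the strict inequality $\alpha d+2\beta<4$ enters, in the guise of $p_3'(\alpha+1)<p_3$ --- produces $\norm{N(u)}_{L^{p_3'}(I;L^{q_3',2})}\lesssim |I|^{\gamma}\norm{u}_{L^{p_3}(I;L^{q_3,2})}^{\alpha+1}$ with some $\gamma>0$, together with the analogous difference estimate coming from Lemma~\ref{lemma_nonlinear}.

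Combining these with \eqref{lemma_Strichartz_1}--\eqref{lemma_Strichartz_2} shows that the map $\Phi(u)(t):=e^{i(t-t_0)\Delta}u_0-i\int_{t_0}^{t}e^{i(t-s)\Delta}N(u(s))\,ds$ is a contraction on the ball of radius $C\norm{u_0}_{L^2}$ in $L^{\infty}(I;L^2)\cap L^{p_3}(I;L^{q_3,2})$ as soon as $|I|$ is small depending only on $\norm{u_0}_{L^2}$; this produces a local solution, and the same estimates propagate uniqueness to the whole class $C(I;L^2)\cap L^{p_3}_{\loc}(I;L^{q_3,2})$ and give $u\in C(I;L^2)$. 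To globalize, I would observe that the local existence time depends only on $\norm{u(t_0)}_{L^2}$, so it suffices to show that the $L^2$ norm stays bounded; in fact $\norm{u(t)}_{L^2}=\norm{u_0}_{L^2}$. Since we only assume $u_0\in L^2$, the conservation law is justified by a regularization argument: approximating $u_0$ in $L^2$ by $H^1$ data, using the mass conservation for the corresponding $H^1$ solutions (which exist by \cite{GeSt,Dinh}), and passing to the limit via the continuous dependence furnished by the contraction estimate. Iterating the local construction then yields the global solution.

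The last assertion is then immediate: the bound $N(u)\in L^{p_3'}_{\loc}(\R;L^{q_3',2})$ established above and the inhomogeneous Strichartz estimate \eqref{lemma_Strichartz_2} give $\int_{t_0}^{\cdot}e^{i(\cdot-s)\Delta}N(u(s))\,ds\in L^{p}_{\loc}(\R;L^{q})$ for every admissible pair $(p,q)$, while $e^{i(\cdot-t_0)\Delta}u_0\in L^{p}_{\loc}(\R;L^q)$ by \eqref{lemma_Strichartz_1}.

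The step I expect to be the most delicate is not the fixed point itself but the two ends of it: keeping track of the Lorentz second indices so that the H\"older chain genuinely closes with $L^{q_3,2}$ on both sides, and justifying conservation of mass for merely $L^2$ data so that the local solutions can be patched into a global one.
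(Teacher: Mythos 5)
Your proposal is correct, and the fixed-point part is essentially identical to the paper's: the same admissibility check for $(p_3,q_3)$, the same Lorentz--H\"older chain through $1/r=1/q_3'-\beta/d$ with $r(\alpha+1)=q_3$ and the embedding $L^{q_3,2}\subset L^{q_3,2(\alpha+1)}$, and the same use of strict mass-subcriticality to gain a positive power of $|I|$ (your condition $p_3'(\alpha+1)<p_3$ is equivalent to the paper's $\delta=1-(\alpha+2)/p_3>0$). Where you genuinely diverge is the conservation law, which the paper explicitly flags as the one step it does \emph{not} take from Tsutsumi's scheme. You propose the classical regularization route: approximate $u_0$ by $H^1$ data, invoke mass conservation for the $H^1$ solutions of \cite{GeSt,Dinh}, and pass to the limit using the Lipschitz dependence on data over a time interval controlled by $\norm{u_0}_{L^2}$. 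This works, but it imports the $H^1$ theory as a black box and silently requires two additional facts: that the stated parameter range sits inside the $H^1$ well-posedness range, and that the $H^1$ solution coincides with your $L^2$ solution (persistence/uniqueness in the larger class). The paper instead follows Ozawa's method \cite{Ozawa_CVPDE}: it expands $\norm{e^{-it\Delta}u(t)}_{L^2}^2$ via the Duhamel formula, checks that each pairing such as $\int_0^t\<e^{is\Delta}u_0,N(u(s))\>ds$ is a well-defined duality coupling thanks to the Strichartz bounds already in hand, and shows by Fubini and $\Im\<N(u),u\>=0$ that the cross terms cancel exactly. That computation is entirely self-contained at the $L^2$ level and avoids any appeal to higher regularity, at the cost of a slightly more delicate bookkeeping of which space-time norms make each integral absolutely convergent. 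Either route closes the proof; yours is shorter to state but rests on more external input.
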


\begin{proof}The proof follows basically the same line as that in Tsutsumi \cite{Tsutsumi_FE}, except for the proof of the $L^2$-conservation law for which we will use the method by Ozawa \cite{Ozawa_CVPDE}. 

We may assume $t_0=0$, the general case being similar. Consider the map
$$
\Phi(v)=e^{it\Delta}u_0-i\int_0^t e^{i(t-s)\Delta}N(v(s))ds.
$$
Let $\norm{v}_{\mathscr Z_T}=\norm{v}_{L^\infty_TL^2_x}+\norm{v}_{L^{p_3}_TL^{q_3,2}_x}$, where  $L^p_TL^q_x:=L^p([-T,T];L^q(\R^d))$. Note that $(p_3,q_3)$ is an admissible pair such that
$
(\alpha+1)/q_3=1/q_3'-\beta/d
$. 
Lemma \ref{lemma_Strichartz} then implies
\begin{align*}
\norm{\Phi(v)}_{L^\infty_T L^2_x}+\norm{\Phi(v)}_{L^{p_3}_TL^{q_3,2}_x}+\norm{\Phi(v)}_{L^p_TL^{q}_x}\lesssim \norm{u_0}_{L^2}+|\lambda|\norm{|x|^{-\beta}|v|^{\alpha+1}}_{L^{p_3'}_TL^{q_3',2}_x}
\end{align*}
for any admissible pair $(p,q)$. It follows from Lemma \ref{lemma_Holder} that the last term satisfies
\begin{align*}
\norm{|x|^{-\beta}|v|^{\alpha+1}}_{L^{p_3'}_TL^{q_3',2}_x}
&\lesssim \norm{|x|^{-\beta}}_{L^{d/\beta,\infty}_x}\norm{|v|^{\alpha+1}}_{L^{p_3'}_TL^{r_1,2}_x}\\
&\lesssim \norm{v}_{L^{p_3'(\alpha+1)}_TL^{q_3,2(\alpha+1)}_x}^{\alpha+1}\\
&\lesssim T^{\delta}\norm{v}_{L^{p_3}_TL^{q_3,2}_x}^{\alpha+1},
\end{align*}
where $r_1$ is defined by the relation $1/r_1=1/q_3'-\beta/d$ so that $r_1(\alpha+1)=q_3$ and $\delta=1-(\alpha+2)/p_3$. Note that the condition $0<\alpha<(4-2\beta)/d$ implies $(\alpha+2)/p_3<1$, so $\delta>0$. $\Phi$ thus satisfies 
$$
\norm{\Phi(v)}_{\mathscr Z_T}\lesssim \norm{u_0}_{L^2_x}+T^\delta \norm{v}_{\mathscr Z_T}^{\alpha+1}.
$$
Since $||z_1|^\alpha z_1-|z_2|^\alpha z_2|\lesssim (|z_1|^\alpha+|z_2|^\alpha)|z_1-z_2|$, a similar computation implies
$$
\norm{\Phi(v_1)-\Phi(v_2)}_{\mathscr Z_T}\lesssim T^\delta (\norm{v_1}_{\mathscr Z_T}^\alpha+\norm{v_2}_{\mathscr Z_T}^\alpha)\norm{v_1-v_2}_{\mathscr Z_T}.
$$
By the contraction mapping theorem, we therefore obtain the local solution $u\in C_TL^2_x\cap L^{p_3}_TL^{q_3,2}_x$ for sufficiently small $T=T(\alpha,\beta,d,\norm{u_0}_{L^2})$, which belongs to $L^{p}_TL^{q}_x$ for any admissible pair $(p,q)$. 

To show the uniqueness, we take two solutions $u_1$ and $u_2$ with the same initial datum and set $T'=\sup\{t\in [0,T]\ |\ u_1(s)=u_2(s)\ \text{on}\ [-t,t]\}$. Assume $T'<T$ for contradiction. Choosing $T'<T''<T$ and setting $J=[-T'',T']\cup[T',T'']$, we find by a similar argument as above that
\begin{align*}
&\norm{u_1-u_2}_{L^{p_3}(J;L^{q_3,2}_x)}\\
&\le C |T''-T'|^\delta\left(\norm{u_1}_{L^{p_3}(J;L^{q_3,2}_x)}^\alpha+\norm{u_2}_{L^{p_3}(J;L^{q_3,2}_x)}^\alpha\right)\norm{u_1-u_2}_{L^{p_3}(J;L^{q_3,2}_x)}
\end{align*}
with some $C>0$ independent of $T',T'',u_1$ and $u_2$. Taking $T''$ sufficiently close to $T'$ implies that the right hand side is dominated by $\norm{u_1-u_2}_{L^{p_3}(J;L^{q_3,2}_x)}/2$ and hence $u_1(t)=u_2(t)$ on $J$. This contradicts with the definition of $T'$, yielding $T'=T$. 

To extend $u$ globally in time, it is enough to observe the following $L^2$-conservation law holds: 
\begin{align}
\label{proposition_appendix_B_1}
\norm{u(t)}_{L^2}=\norm{u_0}_{L^2},\quad t\in [-T,T].
\end{align}
Indeed, since $e^{it\Delta}u_0\in L^\infty_tL^2_x\cap L^{p_3}_TL^{q_3}_x$ by Lemma \ref{lemma_Strichartz} and $N(u)\in L^1_tL^2_x\cap L^{p_3'}_TL^{q_3'}_x$ by the above argument, for all $t\in [-T,T]$, the quantity $$\int_0^t \<e^{is\Delta}u_0,N(u(s))\>ds$$ makes sense as the duality coupling on $(L^\infty_tL^2_x\cap L^{p_3}_TL^{q_3}_x)\times (L^1_tL^2_x+L^{p_3'}_TL^{q_3'}_x)$ and is finite. Thus, \begin{align}\nonumber\norm{u(t)}_{L^2}^2&=\norm{e^{-it\Delta}u(t)}_{L^2}^2\\\label{proposition_appendix_B_2}&=\norm{u_0}_{L^2}^2-2\Im \int_0^t\<e^{is\Delta}u_0,N(u(s))\>ds+\bignorm{\int_0^t e^{-is\Delta}N(u(s))ds}_{L^2}^2.
\end{align}
By a similar duality argument, the following computations are also rigorously justified: 
\begin{align*}
\bignorm{\int_0^t e^{-is\Delta}N(u(s))ds}_{L^2}^2
&=\Re\int_0^t\int_0^t\<N(u(s)),e^{i(s-s')\Delta}N(u(s'))\>ds'ds\\
&=2\Re\int_0^t\int_0^s\<N(u(s)),e^{i(s-s')\Delta}N(u(s'))\>ds'ds\\
&=-2\Im \int_0^t\left\langle N(u(s)),u(s)+i\int_0^se^{i(s-s')\Delta}N(u(s'))ds'\right\rangle ds\\
&=2\Im \int_0^t\left\langle e^{is\Delta}u_0,N(u(s))\right\rangle ds,
\end{align*}
where, in the second line, we have written the integral over $[0,t]\times[0,t]$ as $$\int_0^t\int_0^s(\cdots)ds'ds+\int_0^t\int_0^{s'}(\cdots)dsds'=2\int_0^t\int_0^s(\cdots)ds'ds$$ by Fubini's theorem, and we have used the fact $\Im \<N(u),u\>=0$ and \eqref{proposition_appendix_B_2} in the third and the last lines, respectively. Hence the second and the last terms  in the right hand side of \eqref{proposition_appendix_B_2} cancel each other out, yielding the desired $L^2$-conservation law \eqref{proposition_appendix_B_1}. 
\end{proof}

\begin{remark}
\label{remark_appendix_B_2}
In addition to the assumption in Proposition \ref{proposition_B_1}, we suppose $\beta<d/2$. Note that this condition is always satisfied under \eqref{long_range_INLS}. Then $q_3<2(\alpha+1)$ and hence $L^{q_3}\subset L^{q_3,2(\alpha+1)}$. In particular, we obtain
$$
\norm{|x|^{-\beta}|v|^{\alpha+1}}_{L^{p_3'}_TL^{q_3'}_x}\lesssim \norm{|x|^{-\beta}|v|^{\alpha+1}}_{L^{p_3'}_TL^{q_3',2}_x}\lesssim \norm{v}_{L^{p_3'(\alpha+1)}_TL^{q_3,2(\alpha+1)}_x}^{\alpha+1}\lesssim 
T^\delta\norm{v}_{L^{p_3}_TL^{q_3}_x}^{\alpha+1}
$$
from which one can see that the above proof also works if $L^{p_3}_TL^{q_3,2}_x$ replaced by $L^{p_3}_TL^{q_3}_x$. Therefore, in such a case, Proposition \ref{proposition_B_1} holds with $L^{p_3}_{\loc}(\R;L^{q_3,2}(\R^d))$ replaced by $L^{p_3}_{\loc}(\R;L^{q_3}(\R^d))$. 
\end{remark}



\begin{thebibliography}{99}


\bibitem{BrVa} H. Brezis, J. L. V\'azquez, {\it Blow-up solutions of some nonlinear elliptic problems},  Rev. Mat. Univ. Complut. Madrid \textbf{10} (1997), 2, 443--469. 

\bibitem{BPST}
N. Burq, F. Planchon, J. G. Stalker, A.S. Tahvildar-Zadeh, {\it Strichartz estimates for the wave and Schr\"odinger equations with the inverse-square potential}, J. Funct. Anal. \textbf{203} (2003), 519--549.

\bibitem{Cazenave} T. Cazenave, {\it Semilinear Schr\"odinger equations}, Courant Lecture Notes in Mathematics 10. Providence, RI: New York University, Courant Institute of Mathematical Sciences, American Mathematical Society, 2003.

\bibitem{ChPu} G. Chen, F. Pusateri, {\it The 1d nonlinear Schr\"odinger equation with a weighted $L^1$ potential}, to appear in Analysis and PDE, arXiv:1912.10949

\bibitem{CoFe} R. Coifman, C. Fefferman, {\it Weighted norm inequalities for maximal functions and singular integrals}, Studia Math. \textbf{51} (1974), 241--250.

\bibitem{DeGe} J. Derezi\'nski, C. G\'erard, {\it Scattering theory of classical and quantum N-particle systems}, Texts and Monographs in Physics. Springer-Verlag, Berlin, 1997

\bibitem{Dinh} V. D. Dinh, {\it Scattering theory in a weighted L2 space for a class of the defocusing inhomogeneous nonlinear Schr\"odinger equation}, preprint. arXiv:1710.01392

\bibitem{FaGu_JDE} L. G. Farah, C. M. Guzm\'an, {\it Scattering for the radial 3D cubic focusing inhomogeneous nonlinear Schr\"odinger equation}, J. Differential Equations \textbf{262} (2017), 4175--4231.

\bibitem{FaGu_BBMC} L. G. Farah, C. M. Guzm\'an, {\it Scattering for the radial focusing inhomogeneous NLS equation in higher dimensions}, Bull. Braz. Math. Soc. (N.S.) \textbf{51} (2020), 449--512.

\bibitem{GeSt} F. Genoud and C. A. Stuart, {\it Schr\"odinger equations with a spatially decaying nonlinearity: existence and stability of standing waves}, Discrete Contin. Dyn. Syst. \textbf{21} (2008),  137--186. 

\bibitem{GPR} P. Germain, F. Pusateri, F. Rousset, {\it The nonlinear Schr\"odinger equation with a potential}, Ann. Inst. H. Poincar\'e Anal. Non Lin\'eaire \textbf{35} (2018), 1477--1530.

\bibitem{GiOz} J. Ginibre and T. Ozawa, {\it Long range scattering for nonlinear Schr\"odinger and Hartree equations in space dimension $n\ge2$}, Commun. Math. Phys. \textbf{151} (1993), no. 3, 619--645.

\bibitem{GiVe} J. Ginibre, G. Velo, {\it The global Cauchy problem for the non linear Schr\"{o}dinger equation}, Ann.\ lHP-Analyse\ non\ lin\'eaire.\ \textbf{2} (1985), 309--327.

\bibitem{GiVe_1989CMP} J. Ginibre, G. Velo, {\it Scattering theory in the energy space for a class of nonlinear wave equations}, Commun. Math. Phys. \textbf{123} (1989), 535--573. 


\bibitem{Grafakos_1} L. Grafakos, {\it Classical Fourier analysis. Third edition}, Graduate Texts in Mathematics, 249. Springer, New York, 2014. 

\bibitem{Grafakos_2} L. Grafakos, {\it Modern Fourier analysis. Third edition}, Graduate Texts in Mathematics, 250. Springer, New York, 2014. 


\bibitem{Guzman} C. M. Guzm\'an, {\it On well posedness for the inhomogeneous nonlinear Schr\"odinger equation}, Nonlinear Anal. Real World Appl. \textbf{37} (2017), 249--286.

\bibitem{HaNa1998} N. Hayashi, P. I. Naumkin, {\it Asymptotics in large time of solutions to nonlinear Schr\"odinger and Hartree equations}, Amer. J. Math. \textbf{120} (1998), 369--389.

\bibitem{HaNa2006} N. Hayashi and P. I. Naumkin, {\it Domain and range of the modified wave operator for Schr\"odinger equations with a critical nonlinearity}, Commun. Math. Phys. \textbf{267} (2006), 477--492.

\bibitem{HWN} N. Hayashi, H. Wang, P. I. Naumkin, {\it Modified wave operators for nonlinear Schr\"odinger equations in lower order Sobolev spaces}, J. Hyperbolic Differ. Equ. \textbf{8} (2011), 759--775. 

\bibitem{Herbst} I. W. Herbst, {\it Spectral theory of the operator $(p^2+m^2)^{1/2}-Ze^2/r$}. Commun. Math. Phys. \textbf{53} (1977), 285--294.

\bibitem{Ivan} I. P. Naumkin, {\it Sharp asymptotic behavior of solutions for cubic nonlinear Schr\"odinger equations with a potential}, J. Math. Phys. \textbf{57} (2016), 051501

\bibitem{KeTa} M. Keel, T. Tao, {\it Endpoint Strichartz estimates}, Amer. J. Math. \textbf{120} (1998), 955--980.

\bibitem{KePoVe_CPAM} C. E. Kenig, G. Ponce, L. Vega, {\it Well-posedness and scattering results for the generalized Korteweg-de Vries equation via the contraction principle}, Comm. Pure Appl. Math. \textbf{46} (1993), no. 4, 527--620.

\bibitem{KMVZZ} R. Killip, C. Miao, M. Visan, J. Zhang, J. Zheng, {\it The energy-critical NLS with inverse-square potential}, Discrete Contin. Dyn. Syst. \text{37} (2017), 3831--3866.

\bibitem{LiKe} W. -M. Liu, E. Kengne, {\it Schr\"odinger equations in nonlinear systems}, Springer, Singapore, 2019. 

\bibitem{LMM} J. Lu, C. Miao, J. Murphy, {\it Scattering in $H^1$ for the intercritical NLS with an inverse-square potential}, J. Differential Equations \textbf{264} (2018) 3174--3211.

\bibitem{MaMi} S. Masaki, H. Miyazaki, {\it Long range scattering for nonlinear Schr\"odinger equations with critical homogeneous nonlinearity}, SIAM J. Math. Anal. \textbf{50} (2018), no. 3, 3251--3270. 

\bibitem{MMU} S. Masaki, H. Miyazaki, K. Uriya, {\it Long-range scattering for nonlinear Schr\"odinger equations with critical homogeneous nonlinearity in three space dimensions},  Trans. Amer. Math. Soc. \textbf{371} (2019), no. 11, 7925--7947.

\bibitem{MMS} S. Masaki, J. Murphy, J. Segata, {\it Modified scattering for the one-dimensional cubic NLS with a repulsive delta potential}, Int. Math. Res. Not. IMRN \textbf{2019}, no. 24, 7577--7603. 

\bibitem{MMZ} C. Miao, J. Murphy, J. Zheng, {\it Scattering for the non-radial inhomogeneous NLS}, arXiv:1912.01318. 

\bibitem{Mizutani} H. Mizutani, {\it Scattering theory in homogeneous Sobolev spaces for Schr\"odinger and wave equations with rough potentials}, J. Math. Phys. \textbf{61} (2020) 091505.

\bibitem{NPV} E. Di Nezza, G. Palatucci and E. Valdinoci, {\it Hitchhiker's guide to the fractional Sobolev spaces}, Bull. Sci. Math. \textbf{136} (2012), 521--573.

\bibitem{Ozawa1991} T. Ozawa, {\it Long range scattering for nonlinear Schr\"odinger equations in one space dimension}, Commun. Math. Phys. \textbf{139} (1991), 479--493.

\bibitem{Ozawa_CVPDE} T. Ozawa, {\it Remarks on proofs of conservation laws for nonlinear Schr\"odinger equations}, Calc. Var. Partial Differential Equations \textbf{25} (2006), 403--408.

\bibitem{ReSi_I} M. Reed, B. Simon, {\it Methods of Modern Mathematical Physics I}, Academic Press, 1980

\bibitem{ReSi_II} M. Reed, B. Simon, {\it Methods of Modern Mathematical Physics II}, Academic Press, 1975

\bibitem{Segata} J. Segata, {\it Final state problem for the cubic nonlinear Schr\"odinger equation with repulsive delta potential}, Comm. Partial Differ. Equ. \textbf{40} (2015), 309--28.

\bibitem{ShTo} A. Shimomura, S. Tonegawa, {\it Remarks on long range scattering for nonlinear Schr\"odinger equations with Stark effects}, J. Math. Kyoto Univ. \textbf{45} (2005), 205--216.
 
\bibitem{Strichartz}R. Strichartz, {\it Restrictions of Fourier transforms to quadratic surfaces and decay of solutions of wave equations}, Duke\ Math.\ J.\ \textbf{44} (1977), 705--714.

\bibitem{Suzuki} T. Suzuki, {\it Scattering theory for semilinear Schr\"odinger equations with an inverse-square potential via energy methods}, Evol. Equ. Control Theory \textbf{8} (2019), no. 2, 447--471. 

\bibitem{Tsutsumi_FE} Y. Tsutsumi, {\it $L^2$-solutions for nonlinear Schr\"odinger equations and nonlinear groups}, Funkcial. Ekvac. \textbf{30} (1987), 115--125.

\bibitem{TsYa} Y. Tsutsumi, K. Yajima, {\it The asymptotic behavior of nonlinear Schr\"odinger equations}, Bull. Amer. Math. Soc. (N.S.) \textbf{11} (1984), no. 1, 186--188. 

\bibitem{Visan}M. Visan, {\it The defocusing energy-critical nonlinear Schr\"odinger equation in higher dimensions}, Duke Math. J. \textbf{138} (2007), no. 2, 281--374.

\bibitem{Yajima}K. Yajima, {\it Existence of solutions for Schr\"{o}dinger evolution equations}, Comm\ Math.\ Phys.\ \textbf{110} (1987), 415--426.

\bibitem{ZhZh} J. Zhang and J. Zheng, {\it Scattering theory for nonlinear Schr\"odinger with inverse-square potential}, J. Funct. Anal. \textbf{267} (2014), 2907--2932.

\end{thebibliography}
\end{document}